\newtheorem{thm}{Theorem}[section]
\newtheorem{lem}[thm]{Lemma}
\newtheorem{prop}[thm]{Proposition}
\theoremstyle{definition}
\theoremstyle{remark}
\newtheorem{rem}[thm]{Remark}
\numberwithin{equation}{section}
\newcommand{\beas}{\begin{eqnarray*}}
\newcommand{\eeas}{\end{eqnarray*}}
\newcommand{\bes} {\begin{equation*}}
\newcommand{\ees} {\end{equation*}}
\newcommand{\be} {\begin{equation}}
\newcommand{\ee} {\end{equation}}
\newcommand{\bea} {\begin{eqnarray}}
\newcommand{\eea} {\end{eqnarray}}
\newcommand{\txt} {\textmd}
\begin{document}
\title[Support Theorems]{Support theorems on $\mathbb R^n$ and non-compact symmetric spaces}

\author[E. K. Narayanan]{E. K. Narayanan}

\author[AMIT Samanta]{AMIT Samanta}
\address{Department of Mathematics, Indian Institute of Science, Banaglore 560012}

\email[E. K. Narayanan]{naru@math.iisc.ernet.in} \email[Amit
Samanta]{amit@math.iisc.ernet.in}

\thanks{The first author was supported in part by a grant from UGC via DSA-SAP and the second author was supported by Research Fellowship of Indian Institiute of Science, Bangalore.}

\begin{abstract}
We consider convolution equations of the type $f*T=g$ where
$f,g\in L^{p}(\mathbb{R}^{n})$ and $T$ is a compactly supported
distribution. Under natural assumptions on the zero set of the
Fourier transform of $T$ we show that $f$ is compactly supported,
provided $g$ is. Similar results are proved for non compact
symmetric spaces as well. \vspace*{0.1in}

\begin{flushleft}
MSC 2010 : Primary 22E30; Secondary 22E45, 42B10 \\
\vspace*{0.1in}

Keywords: Support theorems, Paley-Wiener theorems, analytic sets,
symmetric spaces, spherical Fourier transform.
\end{flushleft}

\end{abstract}

\maketitle

\section{Introduction}

Support theorems have attained a lot of attention in the past. We
recall two such results. First, the famous result due to Helgason
\cite{He}. This result states the following: If a measurable
function $f$ on $\mathbb{R}^{n}$ satisfies $(1+|x|)^{N}f\in
L^{1}(\mathbb{R}^{n})$ for all $N > 0 $ and $f$ integrates to zero
over all spheres enclosing a fixed ball of radius $R>0$, then $f$
is supported in $B_{R}$, where $B_{R}$ is the ball of radius $R$
centred at the origin. An analogue holds also for rank one
symetric spaces of non compact type \cite{B}. The second is a
result by A. Sitaram . In \cite{S}, he proved the following
support theorem: If $f\in L^{1}(\mathbb{R}^{n})$ is such that
$f*\chi_{B_{r}}=g$, where $\chi_{B_r}$ is the indicator function
of $B_r$ and $g$ is supported in $B_{R}$, then supp $f\subseteq
B_{R+r}$.

In this paper we are interested in the second result. We consider
convolution equations of the form $f*T =g,$ where $T$ is a
compactly supported distribution on $\mathbb{R}^{n}$ and $f\in
L^{p}(\mathbb{R}^{n})$. The question we are interested in is: can
we conclude that $f$ is compactly supported, if $g$ is compactly
supported? Combining methods from several complex variables and
harmonic analysis we prove general support theorems under natural
assumptions on the zero set of the entire function $\widehat{T}$
(Fourier transform of $T$). When $T=\chi_{B_{r}}$ or $\mu_{r}$
(the normalized surface measure on the sphere of radius $r$ on
$\mathbb{R}^{n}$), this problem was studied by Sitaram \cite{S},
Volchkov \cite{V} etc. When $T$ is a distribution supported at the
origin, this becomes a problem in PDE. In \cite{T}, Treves proved
that, if $P(D)u=v$ and $v$ is compactly supported, then $u$ is
also compactly supported, provided
$u\in\mathcal{S}(\mathbb{R}^{n})$ (Schwartz space) and the variety
of zeros of each irreducible factor of $P$ in $\mathbb{C}^{n}$
intersects $\mathbb{R}^{n}$. These questions were later taken up
by Littman in \cite{L1} and \cite{L2}. Considering the principal
value integral
$$\int_{\mathbb{R}^{n}}\frac{\hat{v}(y)}{P(y)}e^{ix\cdot y}dy$$
he was able to show that $u$ is compactly supported with the
assumption that $\{x\in\mathbb{R}^{n}:P(x)=0\}$ has dimenssion
$(n-1)$. Hormander strengthened these results in \cite{Ho}. Our
results may be viewed as generalizations of these results. We end
this section with the following theorem from \cite{AN} which will
be needed later.

\begin{thm}
If $f\in L^{p}(\mathbb{R}^{n})$ and supp $\hat{f}$ is carried by
a $C^{1}$ manifold of dimension $d<n$ then $f=0$ provided $1\leq
p\leq \frac{2n}{d}$ and $d>0$. If $d=0$ then $f=0$ for $1\leq
p<\infty$.
\end{thm}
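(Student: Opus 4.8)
The plan is to peel off the easy ranges first and then reduce the genuinely hard case to an oscillatory-integral estimate which, via Fubini, converts $L^{p}$-membership on $\mathbb R^{n}$ into a scaling inequality whose break-even exponent is exactly $2n/d$. The easy cases: if $d=0$ then $\operatorname{supp}\widehat f$ is a closed discrete set, so on each compact set $\widehat f$ is a finite sum of derivatives of Dirac masses and, inverting, $f$ is an exponential polynomial $\sum_{j}P_{j}(x)e^{ia_{j}\cdot x}$ (globally, by a routine limiting argument), which lies in some $L^{p}(\mathbb R^{n})$, $p<\infty$, only if it vanishes. If $1\le p\le 2$ — already covering $[1,2]\subseteq[1,2n/d]$ — the Hausdorff--Young inequality gives $\widehat f\in L^{p'}(\mathbb R^{n})\subseteq L^{1}_{\mathrm{loc}}$, while a $C^{1}$ manifold of dimension $d<n$ is Lebesgue-null in $\mathbb R^{n}$; hence $\widehat f=0$ a.e.\ and $f=0$. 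So the substance is the range $2<p\le 2n/d$ with $1\le d\le n-1$.

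For that range I would first \emph{localize}. Using a $C^{1}$ partition of unity on $M$ and replacing $f$ by $f*\check\chi$ for a cut-off $\chi\in C_{c}^{\infty}(\mathbb R^{n})$ — which keeps $f$ in $L^{p}$ by Young's inequality and makes $\widehat{f*\check\chi}=\chi\widehat f$ carried by a compact coordinate patch — it suffices to treat the case in which $\widehat f$ is a compactly supported distribution carried by a graph $M=\{(\eta,\psi(\eta)):\eta\in\overline B\}$ over a ball $B\subset\mathbb R^{d}$, with $\psi\in C^{1}(\overline B;\mathbb R^{n-d})$. A compactly supported distribution has finite order, and by absorbing transverse differentiations into the phase (which only improves the relevant decay) one reduces to the case in which $\widehat f$ is a finite complex measure on $M$, i.e.\ $\widehat f=\Psi_{*}\lambda$ with $\Psi(\eta)=(\eta,\psi(\eta))$ for a finite measure $\lambda$ on $\overline B$. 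Writing $x=(\xi,\omega)\in\mathbb R^{d}\times\mathbb R^{n-d}$ one then has
\[
f(\xi,\omega)=\int_{\overline B}e^{\,i(\eta\cdot\xi+\psi(\eta)\cdot\omega)}\,d\lambda(\eta)=\widehat{\nu_{\omega}}(\xi),\qquad \nu_{\omega}:=e^{\,i\psi(\cdot)\cdot\omega}\lambda,
\]
so that $\|\nu_{\omega}\|=\|\lambda\|$ for every $\omega$, and, by Fubini, $f(\cdot,\omega)\in L^{p}(\mathbb R^{d})$ for a.e.\ $\omega$ with $\|f\|_{L^{p}(\mathbb R^{n})}^{p}=\int_{\mathbb R^{n-d}}\|f(\cdot,\omega)\|_{L^{p}(\mathbb R^{d})}^{p}\,d\omega$.

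Now assume $\lambda\ne 0$. After a further localization and an application of the Lebesgue differentiation theorem to the polar decomposition $d\lambda=e^{i\theta}d|\lambda|$, one may assume the total mass $\lambda(\mathbb R^{d})\ne 0$; one then tests $f(\cdot,\omega)$ against the function $g_{\omega}$ with $\widehat{g_{\omega}}=e^{-i\psi(\cdot)\cdot\omega}\phi$, where $\phi\in C_{c}^{\infty}$ and $\phi\equiv 1$ on $\overline B$. By Parseval $\langle f(\cdot,\omega),\overline{g_{\omega}}\rangle$ is a nonzero constant, while — this is the crucial quantitative input — the spreading rate of $\widehat{e^{-i\psi\omega}\phi}$ should give $\|g_{\omega}\|_{L^{p'}(\mathbb R^{d})}\lesssim|\omega|^{\,d(1/2-1/p)}$. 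Hölder's inequality then yields $\|f(\cdot,\omega)\|_{L^{p}(\mathbb R^{d})}\gtrsim|\omega|^{-d(1/2-1/p)}$ for a.e.\ $\omega$, whence
\[
\|f\|_{L^{p}(\mathbb R^{n})}^{p}\gtrsim\int_{\mathbb R^{n-d}}|\omega|^{-d(p/2-1)}\,d\omega\;\asymp\;\int_{1}^{\infty}\rho^{\,n-1-dp/2}\,d\rho,
\]
which diverges precisely when $p\le 2n/d$. This contradicts $f\in L^{p}(\mathbb R^{n})$, so $\lambda=0$ and $f=0$.

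The main obstacle is the uniform bound $\|g_{\omega}\|_{L^{p'}(\mathbb R^{d})}\lesssim|\omega|^{d(1/2-1/p)}$ under the mere hypothesis $\psi\in C^{1}$: stationary-phase asymptotics are unavailable, so one cannot read off the decay $|\omega|^{-d/2}$ directly, and the estimate must be arranged so that only the $C^{1}$-geometry is used — e.g.\ through a dyadic/level-set analysis of the oscillatory integral interpolated between its $L^{2}$ and $L^{1}$ behaviour. Complementary technical points — reducing a finite-order distribution carried by a $C^{1}$ manifold to a measure, handling measures $\lambda$ that concentrate on lower-dimensional subsets of $M$ (where the sharper exponent $2n/d'$ with $d'<d$ applies, so a stratification suffices), and tracking $L^{p}$-membership through every reduction — are routine by comparison.
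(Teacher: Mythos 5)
This theorem is imported verbatim from \cite{AN}; the paper under review contains no proof of it, so your argument has to be judged on its own terms. Your treatment of the easy ranges ($d=0$ via local finiteness of distributions on a discrete set, $1\le p\le 2$ via Hausdorff--Young and the Lebesgue-nullity of $M$) is fine, and your scaling heuristic is the right one: the predicted lower bound $\|f(\cdot,\omega)\|_{L^p(\mathbb R^d)}\gtrsim|\omega|^{-d(1/2-1/p)}$ is exactly sharp for the sphere and reproduces the critical exponent $2n/d$. But the step you yourself flag as ``the main obstacle'' --- the bound $\|g_{\omega}\|_{L^{p'}(\mathbb R^{d})}\lesssim|\omega|^{d(1/2-1/p)}$ --- carries the entire content of the hard case and, as set up, cannot be closed in the full range. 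Since $\psi$ is only $C^{1}$, the amplitude $h_{\omega}=e^{-i\psi(\cdot)\cdot\omega}\phi$ has exactly one derivative at your disposal, with $\|\nabla h_{\omega}\|_{2}\lesssim 1+|\omega|$. Writing $\widehat{h_{\omega}}(\xi)=\widehat{\partial_{j}h_{\omega}}(\xi)/(i\xi_{j})$ and using Plancherel plus H\"older does give the tail bound $\|g_{\omega}\|_{L^{p'}(|\xi|>C|\omega|)}\lesssim|\omega|^{d(1/2-1/p)}$ (with precisely the right power), but only when $2p'/(2-p')>d$, i.e.\ $p'>2d/(d+2)$ --- and this is the \emph{sharp} range of the Bernstein embedding $\mathcal F\bigl(C^{1}_{c}(\mathbb R^{d})\bigr)\subset L^{p'}$, so no amount of rearranging a one-derivative argument will do better. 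Whenever $d\ge 3$ and $n\ge d^{2}/(d-2)$ (e.g.\ $n=10$, $d=3$, $p\in(6,20/3]$), the claimed range $p\le 2n/d$ contains exponents with $p'\le 2d/(d+2)$, where a unimodular $C^{1}$ multiple of a bump need not even have inverse Fourier transform in $L^{p'}$. So your argument, once the tail estimate is actually written out, proves the theorem for $d\le 2$ and for hypersurfaces, but not in the stated generality; covering intermediate dimensions requires exploiting the special structure of the phase $\psi(\eta)\cdot\omega$ (or pairing only over $|\xi|\lesssim|\omega|$ and controlling the exterior part of $f$ by other means), and that is where the real work lies.

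A second, independent gap is the reduction ``by absorbing transverse differentiations into the phase'' from a compactly supported distribution carried by the $C^{1}$ graph to a pushforward $\Psi_{*}\lambda$ of a \emph{finite measure} $\lambda$. The structure theorem for distributions supported on a submanifold requires smoothness of the manifold (transverse derivatives are not even well defined across a merely $C^{1}$ graph without further argument), and even in the smooth case it produces $\sum_{|\alpha|\le N}\partial_{\omega}^{\alpha}(\Psi_{*}\lambda_{\alpha})$ with the $\lambda_{\alpha}$ distributions of positive order on the base, not measures; your subsequent steps --- $\|\nu_{\omega}\|=\|\lambda\|$, the polar decomposition, and the Lebesgue-point argument giving $\lambda(\mathbb R^{d})\ne 0$ --- all use that $\lambda$ is a measure. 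This is likely repairable (e.g.\ by testing against $\eta^{\beta}\phi$ to isolate the top-order piece), but as written it is asserted rather than proved. In short: the architecture is plausible and the exponent bookkeeping is correct, but the two load-bearing reductions are missing, and the first of them fails outright for part of the claimed range of $(n,d,p)$.
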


\section{Support theorems on $\mathbb{R}^{n}$}

In this section we prove support theorems on $\mathbb{R}^{n}$
under natural assumptions on the zero set of the Fourier
transform of the distribution $T$. Before we state our results we
recall some notation from several complex variables which will be
used through out.

Let $F$ be an entire function on $\mathbb{C}^{n}$. Then $Z_{F}$
will denote the zero set of $F$, ie $Z_{F}=\{z\in \mathbb{C}^{n}:
F(z)=0\}$.  The set $Z_{F}$ is a complex analytic set and the
connected components of $Z_{F}$ are precisely the irreducible
components of $Z_{F}$. For more details on complex analytic sets
we refer to \cite{C}. Let Reg $(Z_{F})$ denote the regular points
of $Z_{F}$. If $z\in Z_{F}$ then Ord$_{z}F$ will denote the order
of $F$ at $z$ (see \cite{C} page 16). We also recall that the
order is a constant on each connected component of Reg $(Z_{F})$.
If $A$ is a complex analytic set, Sing $A$ will denote the
singular points. That is, Sing $A=A-$Reg $A$.

We start with the following general result.

\begin{thm}
Let $T$ be a compactly supported distribution on $\mathbb{R}^{n}$
and $f\in L^{p}(\mathbb{R}^{n})$ for some $p$ with $1\leq
p\leq\frac{2n}{n-1}$. Assume the following:
\begin{itemize}
\item [\textbf{(a)}]If $V$ is any irreducible component of
$Z_{\widehat{T}}$, then dim$_{\mathbb{R}}(V\cap
\mathbb{R}^{n})=n-1$. \item [\textbf{(b)}] grad $\widehat{T}\neq
0$ on Reg $(Z_{\widehat{T}}) \cap \mathbb{R}^{n}$.
\end{itemize}
Suppose $f*T=g$, where $g$ is compactly supported, then $f$ is
also compactly supported.
\end{thm}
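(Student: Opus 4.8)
The plan is to pass to the Fourier transform side and combine a Paley–Wiener analysis with Theorem 1.2. Taking Fourier transforms of $f*T=g$ gives $\hat f\,\widehat T=\hat g$ as tempered distributions, where $\widehat T$ and $\hat g$ are entire functions of exponential type by the Paley–Wiener–Schwartz theorem. The goal is to show that $\hat f$ (which a priori is only a tempered distribution, since $f\in L^p$) is in fact the restriction to $\mathbb R^n$ of an entire function of exponential type, for then the Paley–Wiener theorem for distributions gives that $f$ is compactly supported. Away from the real zero set $Z_{\widehat T}\cap\mathbb R^n$ this is immediate: on $\mathbb R^n\setminus Z_{\widehat T}$ we have $\hat f=\hat g/\widehat T$ in the sense of distributions, and since $\widehat T\ne 0$ there, $\hat f$ agrees locally with the smooth (indeed real-analytic) function $\hat g/\widehat T$. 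So the whole problem is local near points of $Z_{\widehat T}\cap\mathbb R^n$.

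Next I would localize near a point $x_0\in Z_{\widehat T}\cap\mathbb R^n$. By hypothesis (a) every irreducible component $V$ of $Z_{\widehat T}$ meets $\mathbb R^n$ in a set of real dimension $n-1$, so $Z_{\widehat T}\cap\mathbb R^n$ is (generically) a real hypersurface; by hypothesis (b), at a regular point $x_0$ the complex gradient $\operatorname{grad}\widehat T(x_0)\ne 0$, so $Z_{\widehat T}$ is a smooth complex hypersurface near $x_0$ and, combined with (a), $Z_{\widehat T}\cap\mathbb R^n$ is locally a real $C^\infty$ (real-analytic) hypersurface near $x_0$. The key claim is that $\hat g$ vanishes to at least the order of $\widehat T$ along $Z_{\widehat T}$ near such $x_0$, so that the quotient $\hat g/\widehat T$ extends holomorphically across $x_0$; this is where the several-complex-variables input enters — one uses that $\widehat T$ has constant order along a connected component of $\operatorname{Reg}(Z_{\widehat T})$ and a division/Weierstrass-preparation argument (near a regular point $\widehat T$ factors, up to a unit, as a power of a single defining coordinate) to conclude that $\hat g$, being divisible by $\widehat T$ as a distribution on $\mathbb R^n$, is actually divisible by $\widehat T$ as a holomorphic germ. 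One has to handle the singular set $\operatorname{Sing} Z_{\widehat T}\cap\mathbb R^n$ separately: it is a complex-analytic set of lower dimension, hence has real dimension $\le n-2$ (Hausdorff-measure zero in the relevant sense), and a removable-singularity argument across a set of this size lets us still conclude that $h:=\hat g/\widehat T$ is a well-defined entire function.

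Finally, with $h$ entire, I would check it has exponential type: on the real points away from $Z_{\widehat T}$ we have $\hat f=h$ as distributions, and since $\hat f$ is tempered while $h$ is entire agreeing with it off a closed null set, $\hat f=h$ everywhere as distributions; thus $h$ is a tempered entire function. Exponential-type bounds for $h=\hat g/\widehat T$ follow from the exponential-type bounds on $\hat g$ together with a lower bound for $|\widehat T|$ — this is the delicate quantitative step: one needs, say, a bound of the form $|\widehat T(z)|\ge c(1+|z|)^{-M}e^{-b|\operatorname{Im} z|}$ off a small neighbourhood of $Z_{\widehat T}$, obtained from a minimum-modulus / Łojasiewicz-type estimate, and then the maximum principle (or an $L^2$/Cauchy-estimate argument on polydiscs avoiding $Z_{\widehat T}$) to control $h$ near $Z_{\widehat T}$. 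Then Paley–Wiener for tempered distributions gives that $h=\hat f$ is the Fourier transform of a compactly supported distribution; combined with $f\in L^p$ this forces $f$ to be (a function) with compact support, as desired. Alternatively — and this is perhaps cleaner and is presumably the route the authors take — one invokes Theorem 1.2: if $\hat f$ were not of exponential type one would produce a nonzero $L^p$ function whose Fourier transform is carried by the $(n-1)$-dimensional $C^1$ manifold $Z_{\widehat T}\cap\mathbb R^n$, which for $1\le p\le \tfrac{2n}{n-1}$ is impossible, forcing the obstruction to vanish. The main obstacle I anticipate is precisely the holomorphic-division step near $Z_{\widehat T}$ and the attendant treatment of $\operatorname{Sing} Z_{\widehat T}$ — turning "divisibility of distributions on $\mathbb R^n$" into "divisibility of holomorphic germs" while controlling orders — together with securing the exponential-type bound for the quotient.
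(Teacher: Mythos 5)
The overall architecture you describe (show $\hat g/\widehat T$ is entire, kill the leftover distribution on $Z_{\widehat T}\cap\mathbb R^n$, conclude by Paley--Wiener) matches the paper, but there is a genuine gap at the central step: your argument that $\hat g$ vanishes to the order of $\widehat T$ along $Z_{\widehat T}$. You derive this from ``$\hat g$ being divisible by $\widehat T$ as a distribution'' via a Weierstrass-preparation argument, but that premise gives nothing: $\hat f$ is a priori only a tempered distribution, and a singular part of $\hat f$ carried by $Z_{\widehat T}\cap\mathbb R^n$ is annihilated upon multiplication by $\widehat T$ (e.g.\ $\xi\cdot\mathrm{p.v.}\,\tfrac1\xi=1$ in one variable), so the identity $\hat f\,\widehat T=\hat g$ by itself does not force $\hat g$ to vanish on the real zero set at all. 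This is exactly where the hypothesis $p\le \tfrac{2n}{n-1}$ must enter, and your proposal never uses it at this point. The paper's Lemma 2.4 supplies the missing argument: truncate $f$ to balls, write $f_r*T=g+g_r$, and pair with $\widehat\mu$ for a smooth compactly supported measure $\mu$ on the hypersurface; then $g_r*\widehat\mu\to0$ along a sequence $r_k\to\infty$, using the decay $\bigl(\int_{S^{n-1}}|\widehat\nu(s\omega)|^2d\omega\bigr)^{1/2}\lesssim s^{-(n-1)/2}$ for such measures together with an annulus lemma (Lemma 2.2) that needs precisely $p\le\tfrac{2n}{n-1}$. Without some substitute for this quantitative step your proof does not go through.

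Two smaller points. First, your assertion that $\hat f=h$ ``everywhere as distributions'' because they agree off a closed null set is false as stated: the difference can be a nonzero distribution supported on that set. The correct fix --- which you mention only as an ``alternative'' --- is to invoke the Agranovsky--Narayanan result (Theorem 1.1 of the paper) to show that an $L^p$ function, $p\le\tfrac{2n}{n-1}$, whose Fourier transform is carried by an $(n-1)$-dimensional $C^1$ manifold must vanish; this is how the paper kills the correction term $\delta$ supported on $Z_{\widehat T}\cap\mathbb R^n$. Second, the minimum-modulus/\L ojasiewicz estimates you anticipate needing for the exponential-type bound are unnecessary: once $\hat g/\widehat T$ is entire, Malgrange's theorem gives that it is of exponential type, and the paper then sidesteps any ``slowly decreasing'' hypothesis by multiplying $f$ with $\psi\in\mathcal S(\mathbb R^n)$ having $\widehat\psi$ compactly supported and applying the classical Paley--Wiener theorem to $\psi f\in L^1$. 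Your treatment of Lemma 2.3 (containment of the irreducible components of $Z_{\widehat T}$ in $Z_{\hat g}$ via condition (a), plus removability across $\mathrm{Sing}\,Z_{\widehat T}$) is essentially the paper's.
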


We need several lemmas for the proof of this theorem.

\begin{lem}
If $f\in L^{p}(\mathbb{R}^{n}),$ $p=\frac{2n}{n-1,}$ then
$\exists$ $r_{k}\rightarrow\infty$ such that, for any fixed
constants $s_{1},s_{2}>0$ we have $$\int_{r_{k}-s_{1}\leq|x|\leq
r_{k}+s_{2}}|f(x)|^{2}dx\rightarrow 0$$ as $k\rightarrow \infty$.
\end{lem}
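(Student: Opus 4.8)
The plan is to combine Hölder's inequality with the elementary fact that a nonnegative $L^{1}$ function on $[0,\infty)$ cannot be bounded below by a constant multiple of $1/r$ near infinity; the main work is to arrange a single sequence $r_{k}$ that is good for all $s_{1},s_{2}$ simultaneously.

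I would begin with the Hölder estimate. Fix $r$ large and put $A_{r}=\{x:r-s_{1}\le|x|\le r+s_{2}\}$. Since $p=\tfrac{2n}{n-1}>2$, applying Hölder with exponents $p/2$ and its conjugate gives
\[
\int_{A_{r}}|f(x)|^{2}\,dx\ \le\ \Big(\int_{A_{r}}|f(x)|^{p}\,dx\Big)^{2/p}\,|A_{r}|^{\,1-2/p}.
\]
The crucial numerology is $1-\tfrac{2}{p}=1-\tfrac{n-1}{n}=\tfrac1n$, while $|A_{r}|\le C_{n}(s_{1}+s_{2})(r+s_{2})^{n-1}\le C_{n}'(s_{1}+s_{2})\,r^{\,n-1}$ for $r\ge s_{2}$. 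Hence, since $2/p=(n-1)/n$,
\[
\int_{A_{r}}|f|^{2}\ \le\ C(s_{1},s_{2},n)\,\Big(r\int_{A_{r}}|f|^{p}\Big)^{(n-1)/n}.
\]
So it will be enough to find one sequence $r_{k}\to\infty$ with $r_{k}\int_{A_{r_{k}}}|f|^{p}\to 0$ for every choice of $s_{1},s_{2}$.

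For that, for each positive integer $m$ I would set $I_{m}(r)=\int_{r-m\le|x|\le r+m}|f(x)|^{p}\,dx$, which is nondecreasing in $m$. Tonelli's theorem gives
\[
\int_{0}^{\infty}I_{m}(r)\,dr=\int_{\mathbb{R}^{n}}|f(x)|^{p}\,\big|\{r\ge0:r-m\le|x|\le r+m\}\big|\,dx\ \le\ 2m\,\|f\|_{p}^{p}<\infty ,
\]
so $rI_{m}(r)$ cannot stay above a positive constant for all large $r$ (otherwise $I_{m}(r)\ge c/r$ on $[R,\infty)$ would make $\int_{R}^{\infty}I_{m}(r)\,dr$ diverge). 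Thus $\{r>0:rI_{m}(r)<1/m\}$ is unbounded for each $m$, and I can pick $r_{k}>k$ with $r_{k}I_{k}(r_{k})<1/k$; in particular $r_{k}\to\infty$.

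Finally, given $s_{1},s_{2}>0$, choose an integer $m\ge\max(s_{1},s_{2})$. For $k\ge m$ we have $A_{r_{k}}\subseteq\{r_{k}-m\le|x|\le r_{k}+m\}\subseteq\{r_{k}-k\le|x|\le r_{k}+k\}$, so $r_{k}\int_{A_{r_{k}}}|f|^{p}\le r_{k}I_{k}(r_{k})<1/k$, and the Hölder estimate yields $\int_{A_{r_{k}}}|f|^{2}\le C(s_{1},s_{2},n)\,k^{-(n-1)/n}\to 0$, which is the claim. The only genuinely delicate point is producing one sequence that works uniformly in $s_{1},s_{2}$; the monotonicity of $I_{m}(r)$ in $m$ together with the diagonal choice $r_{k}I_{k}(r_{k})<1/k$ is what makes this go through.
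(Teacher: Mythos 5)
Your proof is correct and rests on the same two ingredients as the paper's: H\"older's inequality with exponents $p/2$ and $(p/2)'=n$ (giving the $|A_r|^{1/n}\sim r^{(n-1)/n}$ factor), together with the fact that $\int^{\infty} dr/r$ diverges while $\int_0^{\infty} I_m(r)\,dr\le 2m\|f\|_p^p$ by Tonelli. The only difference is that you argue directly rather than by contradiction and add a diagonal choice of $r_k$ so that one sequence works for all $s_1,s_2$ simultaneously---a quantifier point that the paper's proof (which fixes $s_1,s_2$ before negating) leaves implicit.
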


\begin{proof}
By contrary, assume that $\exists $ $a>0$ and $R>0$ such that
\bea\label{eqn0}\int_{r-s_{1}\leq|x|\leq r+s_{2}}|f(x)|^{2}dx\geq
a,~~~~\forall r\geq R.\eea By Holder's inequality we have
$$\int_{r-s_{1}\leq|x|\leq r+s_{2}}|f(x)|^{2}dx\leq
\left(\int_{r-s_{1}\leq|x|\leq
r+s_{2}}|f(x)|^{\frac{2n}{n-1}}dx\right)^{\frac{n-1}{n}}\left(\int_{r-s_{1}\leq|x|\leq
r+s_{2}}dx\right)^{\frac{1}{n}}.$$ From (\ref{eqn0}) and the above
it follows that for some constant $c>0$
$$\int_{r-s_{1}\leq|x|\leq
r+s_{2}}|f(x)|^{\frac{2n}{n-1}}dx\geq\frac{c}{r},~~~~\forall
r>R.$$ Integrating with respect to $r$ and noting that $f\in
L^{p}(\mathbb{R}^{n})$, $p=\frac{2n}{n-1}$ we obtain a
contradiction. Hence the lemma is proved.
\end{proof}

\begin{lem}
Let $F$ and $G$ be two entire functions on $\mathbb{C}^{n}$ such
that

\indent {\rm{\textbf{(a)}}} Each connected component of Reg
$Z_{F}$ intersected with $\mathbb{R}^{n}$ has real dimension
\noindent $(n-1)$.

\indent {\rm{\textbf{(b)}}} (Reg
$Z_{F})\cap\mathbb{R}^{n}\subseteq Z_{G}\cap\mathbb{R}^{n}$.

\indent {\rm{\textbf{(c)}}} Ord$_{x}F\leq$Ord$_{x}G
~~~~\hspace*{0.1in} \forall ~~~ x\in\mathbb{R}^{n}\cap$ Reg
$Z_{F}$.

\noindent
 Then $\frac{G}{F}$ is an entire function.
\end{lem}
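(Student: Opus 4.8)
Note that $h:=G/F$ is a well-defined meromorphic function on $\mathbb{C}^{n}$; the plan is to prove that its divisor $\mathrm{div}(G)-\mathrm{div}(F)$ is effective, for then, $\mathbb{C}^{n}$ being smooth, $h$ is holomorphic, which is the assertion. Thus I must show two things: (i) $G$ vanishes on every irreducible component $V$ of $Z_{F}$, and (ii) for every such $V$ one has $\mathrm{ord}_{V}(G)\geq\mathrm{ord}_{V}(F)$, where $\mathrm{ord}_{V}$ denotes the generic order of vanishing along $V$. (Equivalently one could argue locally: $h$ is entire iff $F_{z_{0}}$ divides $G_{z_{0}}$ in the unique factorization domain $\mathcal{O}_{z_{0}}$ for every $z_{0}$, and factoring $F_{z_{0}}$ into irreducible germs reduces this to the same two statements; I will use the divisor formulation.)

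\emph{Proof of (i).} Fix an irreducible component $V$ of $Z_{F}$. As $Z_{F}$ is a pure $(n-1)$-dimensional hypersurface, $W:=\mathrm{Reg}\,Z_{F}\cap V=\mathrm{Reg}(V)\setminus\bigcup_{V'\neq V}V'$ is a connected component of $\mathrm{Reg}\,Z_{F}$, hence a connected complex manifold of complex dimension $n-1$ with $\overline{W}=V$. By hypothesis \textbf{(a)}, $N:=W\cap\mathbb{R}^{n}$ has real dimension $n-1$; pick $p\in N$ at which $N$ is, near $p$, a real-analytic submanifold of $\mathbb{R}^{n}$ of dimension $n-1=\dim_{\mathbb{C}}W$. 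Since $N\subset\mathbb{R}^{n}$ is totally real in $\mathbb{C}^{n}$ and $T_{p}N\subset T_{p}W$, the subspace $T_{p}N$ is a \emph{maximal} totally real subspace of the complex vector space $T_{p}W$; consequently, complexifying a real-analytic parametrization of $N$ near $p$ yields a local biholomorphism from a neighbourhood of $0$ in $\mathbb{C}^{n-1}$ onto a neighbourhood of $p$ in $W$ carrying $\mathbb{R}^{n-1}$ into $N$. By hypothesis \textbf{(b)}, $G$ vanishes on $N$, so the classical identity theorem on $\mathbb{C}^{n-1}$ forces $G\equiv 0$ on $W$, hence on $\overline{W}=V$. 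Thus $V\subseteq Z_{G}$; being irreducible of dimension $n-1$, $V$ is an irreducible component of $Z_{G}$, so $\mathrm{ord}_{V}(G)$ makes sense.

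\emph{Proof of (ii).} Here I use repeatedly that, because $\mathbb{R}^{n}$ is totally real in $\mathbb{C}^{n}$, a complex analytic set of complex dimension $d$ meets $\mathbb{R}^{n}$ in a set of real dimension at most $d$. Now $\mathrm{Ord}_{x}F=\mathrm{ord}_{V}(F)$ for \emph{every} $x\in W=\mathrm{Reg}\,Z_{F}\cap V$: the order of $F$ at a regular point of $Z_{F}$ lying on $V$ equals the multiplicity of $V$ (this is the constancy of $\mathrm{Ord}$ on the components of $\mathrm{Reg}\,Z_{F}$ recalled earlier). On the other hand $\mathrm{Ord}_{x}G=\mathrm{ord}_{V}(G)$ for all $x$ in $V$ outside a proper analytic subset $B\subset V$ of complex dimension $\leq n-2$ (namely, off the locus where $G$ vanishes along $V$ to more than the generic order, which also contains $\mathrm{Sing}\,Z_{G}$ and the remaining components of $Z_{G}$). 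Since $\dim_{\mathbb{R}}\big((W\cap B)\cap\mathbb{R}^{n}\big)\leq n-2<n-1=\dim_{\mathbb{R}}(W\cap\mathbb{R}^{n})$, there is a point $x\in W\cap\mathbb{R}^{n}$ with $x\notin B$; for it, hypothesis \textbf{(c)} gives
\[
\mathrm{ord}_{V}(G)=\mathrm{Ord}_{x}G\geq\mathrm{Ord}_{x}F=\mathrm{ord}_{V}(F).
\]
As $V$ was arbitrary, $\mathrm{div}(G)-\mathrm{div}(F)\geq 0$, so $G/F$ is entire.

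The step I expect to carry the real weight — and to require the most care — is (i): passing from the vanishing of $G$ on the real $(n-1)$-dimensional set $W\cap\mathbb{R}^{n}$ to its vanishing on the whole complex $(n-1)$-dimensional manifold $W$. This is a totally-real uniqueness phenomenon, and the cleanest way to carry it out is the complexification argument indicated above, which works precisely because $T_{p}N$ is a \emph{maximal} totally real subspace of $T_{p}W$, so that the complexified parametrization has invertible differential and the ordinary identity theorem applies. The other ingredients — that $Z_{F}$ is a pure hypersurface, that $\mathrm{Ord}$ is constant on components of $\mathrm{Reg}\,Z_{F}$, the UFD property of $\mathcal{O}_{z_{0}}$, and the principle that on $\mathbb{C}^{n}$ an effective divisor comes from a holomorphic function — are standard (cf. \cite{C}); so is the concluding dimension count, which is the only place the exact value $n-1$ in hypothesis \textbf{(a)} (rather than merely a positive dimension) is needed.
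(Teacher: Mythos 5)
Your proof is correct, and while it rests on the same underlying principle as the paper's --- that a complex analytic set of complex dimension $d$ meets the totally real $\mathbb{R}^{n}$ in a set of real dimension at most $d$ --- the implementation differs at both ends. For the containment $V\subseteq Z_{G}$ the paper argues directly with that dimension principle: if $\dim_{\mathbb{C}}(V\cap Z_{G})\leq n-2$ then $\dim_{\mathbb{R}}(V\cap Z_{G}\cap\mathbb{R}^{n})\leq n-2$, contradicting \textbf{(a)} and \textbf{(b)}; you instead complexify a real-analytic parametrization of the maximal totally real submanifold $W\cap\mathbb{R}^{n}\subset W$ and invoke the identity theorem. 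Both are valid; the paper's version is shorter, yours makes the totally-real uniqueness mechanism explicit. For the conclusion, the paper divides on $\mathbb{C}^{n}\setminus\mathrm{Sing}\,Z_{F}$ and removes the singular set by a Hausdorff-measure removable-singularity theorem (Chirka, Prop.~2, p.~298), whereas you phrase everything in terms of divisors and the UFD property of the local rings; again both are standard. One point where your write-up is actually more careful than the paper's: the paper asserts $\mathrm{Reg}\,Z_{F}\subseteq\mathrm{Reg}\,Z_{G}$ and constancy of $\mathrm{Ord}_{z}G$ on all of $\mathrm{Reg}\,Z_{F}$, which literally fails when a component of $Z_{G}$ not contained in $Z_{F}$ crosses $\mathrm{Reg}\,Z_{F}$ (e.g.\ $F=z_{1}$, $G=z_{1}z_{2}$ at the origin); your introduction of the exceptional analytic set $B$ of dimension $\leq n-2$, followed by the observation that its real trace cannot exhaust the $(n-1)$-dimensional set $W\cap\mathbb{R}^{n}$, repairs this harmlessly and delivers the order inequality at a genuinely generic real point.
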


\begin{proof}
Let Reg $Z_{F}=\bigcup_{j\in J}S_{j}$ be the decomposition of Reg
$Z_{F}$ into connected components. Then $Z_{F}=\bigcup_{j\in
J}A_{j}$ where $A_{j}=\overline{S_{j}}$ gives the decomposition of
$Z_{F}$ into irreducible components. If the complex dimension
dim$_{\mathbb{C}}(A_{j}\cap Z_{G})\leq (n-2)$, then
dim$_{\mathbb{R}}(A_{j}\cap Z_{G}\cap\mathbb{R}^{n})\leq (n-2)$
which contradicts \textbf{(a)} due to \textbf{(b)} in the
assumptions. It follows that dim$_{\mathbb{C}}(A_{j}\cap
Z_{G})=(n-1)$. Since $A_{j}$ is an irreducible analytic set in
$\mathbb{C}^{n}$, this will force $A_{j}$ to be an irreducible
component of $Z_{G}$ (see \cite{C}). It follows that Reg ($Z_{F}$)
$\subseteq$ Reg ($Z_{G}$). Since the order is a constant on the
regular part of an analytic set we also have Ord$_{z}F\leq$
Ord$_{z}G$ ~$\forall$ $z\in $Reg $F$. Consequently $\frac{G}{F}$
is holomorphic in $\mathbb{C}^{n}-$ Sing $(Z_{F})$. However the
$(2n-2)$ Hausdroff measure of (Sing $Z_{F}$) is zero (see
\cite{C}) and so by Proposition 2,  page 298, in \cite{C},
$\frac{G}{F}$ extends to an entire function.
\end{proof}

\begin{lem}
Let $f\in L^{p}(\mathbb{R}^{n})$, $1\leq p\leq\frac{2n}{n-1}$.
Let $T$ be a compactly supported distribution on $\mathbb{R}^{n}$
and $f*T=g$, where $g$ is compactly supported. If $\widehat{T}$
is zero on a smooth $(n-1)$ dimensional manifold
$M\subseteq\mathbb{R}^{n}$ then $\widehat{g}(x)=0~~~\forall x\in
M$.
\end{lem}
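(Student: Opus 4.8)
The plan is to exploit the Fourier transform identity $\widehat{g}=\widehat{f}\cdot\widehat{T}$, where $\widehat{T}$ is an entire function by the Paley--Wiener theorem and $\widehat{f}$ is a priori only a tempered distribution. The point $x\in M$ lies on the zero set of $\widehat{T}$, so naively $\widehat{g}(x)=\widehat{f}(x)\cdot 0$, but this is meaningless unless we can make sense of the product near $M$; the content of the lemma is that $\widehat{g}$ genuinely vanishes there. First I would note that since $g$ is compactly supported, $\widehat{g}$ is entire (again by Paley--Wiener), so in particular $\widehat{g}$ restricted to $\mathbb{R}^n$ is a smooth function, and it suffices to show $\widehat{g}(x)=0$ for $x$ in a dense subset of $M$, or indeed just at points where we can run a local argument.

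The key step is a local regularization and slicing argument near a fixed point $x_0\in M$. Choose local coordinates so that, near $x_0$, $M=\{y_n=\varphi(y')\}$ for a smooth function $\varphi$ of $y'=(y_1,\dots,y_{n-1})$, and $\widehat{T}$ vanishes on $M$. I would consider the function of one complex variable obtained by freezing $y'$ near $x_0'$ and varying $y_n$; since $\widehat{T}(y',\varphi(y'))=0$, the entire function $\zeta\mapsto\widehat{T}(y',\zeta)$ has a zero at $\zeta=\varphi(y')$, so we may write $\widehat{T}(y',\zeta)=(\zeta-\varphi(y'))\,H(y',\zeta)$ locally, with $H$ holomorphic in $\zeta$ and smooth in $y'$. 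Then on the distributional side, $\widehat{g}(y',\zeta)=\widehat{f}(y',\zeta)(\zeta-\varphi(y'))H(y',\zeta)$. Testing against a sequence of approximate-identity test functions concentrated transversally at $y_n=\varphi(y')$ and using that $\widehat{g}$ is continuous (indeed real-analytic) while the factor $(\zeta-\varphi(y'))$ kills the singular transversal behaviour, one forces $\widehat{g}(x_0)=0$. An alternative, cleaner route: cover $M$ by the regular pieces and use that $\widehat{g}/\widehat{T}$, interpreted appropriately, would have to blow up along $M$ unless $\widehat{g}$ vanishes there — but this risks circularity with Lemma 2.4, so I would keep the argument self-contained via the slicing.

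The main obstacle is exactly the justification that the product $\widehat{f}\cdot\widehat{T}$, a priori a product of a distribution and a smooth function (which is fine), when it equals the smooth function $\widehat{g}$, actually forces pointwise vanishing of $\widehat{g}$ on the zero set of $\widehat{T}$ — the subtlety being that $\widehat{f}$ could in principle have a distributional singularity supported on $M$ that ``cancels'' the zero of $\widehat{T}$. I expect the resolution to use that $M$ has codimension one and that a single simple factor $(y_n-\varphi(y'))$ cannot be inverted against an arbitrary distribution to produce something supported on $M$ in a way compatible with $\widehat{g}$ being entire of exponential type; concretely, if $(y_n-\varphi(y'))u=0$ near $x_0$ for a distribution $u$, then $u$ is supported on $M$, and if in addition $(y_n-\varphi(y'))H\cdot\widehat{f}=\widehat{g}$ with $\widehat{g}$ smooth, one peels off the factor and evaluates. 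I would carry this out by restricting to a small polydisc, using the Weierstrass-type factorization of $\widehat{T}$ in the transversal variable, and concluding $\widehat{g}(x_0)=\lim (\zeta-\varphi)H\cdot(\text{mollified }\widehat{f})\to 0$; letting $x_0$ range over $M$ finishes the proof.
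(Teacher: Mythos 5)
There is a genuine gap, and it sits exactly at the step you yourself flag as the main obstacle: the claim that the factor $(\zeta-\varphi(y'))$ ``kills the singular transversal behaviour'' of $\widehat{f}$. It does not. The model identity $\xi_n\cdot\mathrm{p.v.}(1/\xi_n)=1$ shows that a distribution singular along $M$, multiplied by a function vanishing simply on $M$, can produce a smooth function that is \emph{nonzero} on $M$; for the same reason your final limit fails, because mollification of $\widehat{f}$ and multiplication by $\widehat{T}$ do not commute in the limit at points of $M$. Concretely, take $T=\partial_n\delta_0$, so $\widehat{T}(\xi)=i\xi_n$ and $M$ is a piece of the hyperplane $\{\xi_n=0\}$, and set $f(x)=\int_{-\infty}^{x_n}g(x',t)\,dt$ for some $g\in C_c^\infty$ with $G(x'):=\int g(x',t)\,dt\not\equiv 0$. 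Then $f*T=g$ is compactly supported, $\widehat{f}$ has precisely the $\mathrm{p.v.}$-plus-$\delta(\xi_n)$ structure you describe, and yet $\widehat{g}(\xi',0)=\widehat{G}(\xi')\not\equiv 0$. This is not a counterexample to the lemma, because this $f$ is bounded but lies in no $L^p$ with $p\leq\frac{2n}{n-1}$ --- and that exposes the decisive defect of your proposal: nowhere do you use the hypothesis $f\in L^p$, $1\leq p\leq\frac{2n}{n-1}$, without which the statement is false. Any correct proof must use the size of $f$ quantitatively to exclude exactly the transversal $\mathrm{p.v.}$-type singularity of $\widehat{f}$ along $M$, and doing so is essentially the whole content of the lemma, not a patchable detail.

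The paper's proof is of a completely different, quantitative nature. One reduces to $f\in L^{2n/(n-1)}\cap C^\infty$ and $T\in L^1$, truncates $f_r=\chi_{B_r}f$, and writes $f_r*T=g+g_r$ with $g_r$ supported in the annulus $\{r-R_1\leq|x|\leq r+R_1\}$. Convolving with $\widehat{\mu}$, where $\mu=\phi\,dx_M$ is a compactly supported smooth measure on $M$, the vanishing of $\widehat{T}$ on $M$ gives $T*f_r*\widehat{\mu}\equiv 0$, hence $g*\widehat{\mu}=-g_r*\widehat{\mu}$. The right-hand side is then shown to tend to $0$ pointwise along a sequence $r_k\to\infty$ by combining the $L^2$ spherical average decay $\bigl(\int_{S^{n-1}}|\widehat{\mu}(s\omega)|^2\,d\omega\bigr)^{1/2}\leq Cs^{-(n-1)/2}$ for smooth measures on hypersurfaces with Lemma 2.2, which asserts that $\int_{r_k-s_1\leq|x|\leq r_k+s_2}|f|^2\to 0$ for some $r_k\to\infty$ --- and it is precisely in Lemma 2.2 that the exponent restriction $p\leq\frac{2n}{n-1}$ enters. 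One concludes $g*\widehat{\mu}\equiv 0$, i.e.\ $\widehat{g}\,\mu=0$, and continuity of $\widehat{g}$ gives $\widehat{g}=0$ on $M$. The ``division by $\widehat{T}$'' and the removal of a residual distribution supported on $Z_{\widehat{T}}\cap\mathbb{R}^n$ (via Theorem 1.1) happen only later, in the proof of Theorem 2.1, \emph{after} this lemma is available.
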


\begin{proof}
By convolving with radial aproximate identities we may assume
that $f\in L^{p_{0}}(\mathbb{R}^{n})\cap
C^{\infty}(\mathbb{R}^{n})$ where $p_{0}=\frac{2n}{n-1}$ and
$T\in L^{1}(\mathbb{R}^{n})$. Let supp $T\subseteq B_{R_{1}}$ and
supp $g\subseteq B_{R_{2}}.$ For $r>0$ define
$f_{r}(x)=\chi_{|x|\leq r}(x)f(x)$ and write \bea
\label{eqn1}f_{r}*T=g+g_{r}.\eea If $r$ is very large then supp
$g_{r}\subseteq \{x:r-R_{1}\leq|x|\leq r+R_{1}\}$ and \bea
\label{eqn2}|g_{r}(x)|\leq|T*f_{r-2R_{1},r+2R_{1}}(x)|\eea where
$$f_{r-2R_{1},r+2R_{1}}(x)=\chi_{r-2R_{1}\leq|x|\leq
r+2R_{1}}(x)f(x).$$ Next, let $\phi\in
C_{c}^{\infty}(\mathbb{R}^{n})$ and consider the measure $\mu$
defined by $$d\mu=\phi(x)dx_{M}$$ where $dx_{M}$ is the surface
measure on $M$. Then $\mu$ is a compactly supported measure on
$M$.  Since $\widehat{T}$ is zero on $M$, it is easy to see by
taking the Fourier transform that $T*f_{r}*\widehat{\mu}$ vanishes
identically.

From (\ref{eqn1}) it follows that
$$g*\widehat{\mu}+g_{r}*\widehat{\mu}\equiv 0.$$ We will show
that $g_{r}*\widehat{\mu}(x)$ goes to zero $\forall x\in
\mathbb{R}^{n}$ as $r\rightarrow\infty$, which implies that
$g*\hat{\mu}$ vanishes idetically. Taking the Fourier transform
again we obtain that $\hat{g}$ vanishes on supp $\phi\cap M$.
Since $\phi$ was arbitrary this proves the lemma.

Fix $x_{0}\in\mathbb{R}^{n}$ and consider
$g_{r}*\widehat{\mu}(x_{0})$. We have, by (\ref{eqn2}) \bea
\label{eqn3}~~~|g_{r}*\widehat{\mu}(x_{0})|\leq \int_{r-R_{1}\leq
|y|\leq
r+R_{1}}|T*f_{r-2R_{1},r+2R_{1}}(y)||\widehat{\mu}(x_{0}-y)|dy.\eea
Now if $\nu$ is a compactly supported smooth measure on $M$ then
$$\left(\int_{S^{n-1}}|\widehat{\nu}(s\omega)|^{2}d\omega\right)^{\frac{1}{2}}\leq\frac{c}{s^{\frac{n-1}{2}}}, ~s> 0.$$
(See \cite{BGG}, Proposition 1, page 2563). Apply the above to the
measure $e^{ix_{0}\cdot y}\phi(y)dy_{M}$ on $M$ to obtain
$$\left(\int_{S^{n-1}}|\widehat{\mu}(x_{0}-s\omega)|^{2}d\omega\right)^{\frac{1}{2}}\leq\frac{c(x_{0})}{s^{\frac{n-1}{2}}}.$$
It follows that $$ \int_{r- R_1 \leq |y| \leq r+R_1} | \hat{\mu}
(x_0 -y) |^2~dy \leq C.$$ A simple application of the
Cauchy-Schwarz inequality to (\ref{eqn3}) along with the above
estimates give us
$$|g_{r}*\widehat{\mu}(x_{0})|\leq C(x_0)
||T||_{1}\left(\int_{r-R_{1}\leq|y|\leq
r+R_{1}}|f(y)|^{2}dy\right)^{\frac{1}{2}} .$$ Choosing $\{r_{k}\}$
as in Lemma \textbf{2.2} we finish the proof.
\end{proof}

\begin{proof}[Proof of Theorem {\rm{\textbf{2.1}}}] Without loss of generality we may assume that
$f\in L^{p_{0}}(\mathbb{R}^{n})$, $p_{0}=\frac{2n}{n-1}$. Since
$f*T=g$ and (Reg $Z_{\widehat{T}}$)$\cap\mathbb{R}^{n}$ is a
smooth $(n-1)$ dimensional manifold, Lemma \textbf{2.4} implies
that $\widehat{g}(x)=0$ if $\widehat{T}(x)=0$. Since grad
$\widehat{T}$ is non zero on Reg $Z_{\widehat{T}}$ we have Ord
$_{x}\widehat{T}=1$ if $x\in$ Reg $Z_{\widehat{T}}$. Since
$\widehat{g}(x)=0~~~\forall x\in (\txt {Reg}
Z_{\widehat{T}})\cap\mathbb{R}^{n}$ it follows that
Ord$_{x}\widehat{g}\geq$ Ord$_{x}\widehat{T}~~~\forall x\in$ Reg
$Z_{\widehat{T}}\cap\mathbb{R}^{n}$. By Lemma \textbf{2.3} we have
that $\frac{\widehat{g}}{\widehat{T}}$ is an entire function.
Hence we have \bea \label{eqn
i}\widehat{f}=\frac{\widehat{g}}{\widehat{T}}+\delta.\eea Where
$\delta$ is a distribution supported on
$Z_{\widehat{T}}\cap\mathbb{R}^{n}$. We will show that $\delta
\equiv 0.$ Let $\phi\in C_{c}^{\infty}(\mathbb{R}^{n})$.
Multiplying (\ref{eqn i}) with $\phi$ and taking the inverse
Fourier transform we obtain
$${(\phi \delta)}^{\check{}}=\check{\phi}*f-h$$ where
$h\in\mathcal{S}(\mathbb{R}^{n})$. Notice that $\check{\phi}*f\in
L^{p_{0}}(\mathbb{R}^{n})$, $p_{0}=\frac{2n}{n-1}$. From Theorem
\textbf{1.1} it follows that $\phi \delta=0$. Since $\phi$ was
arbitrary it follows that $\hat{f}=\frac{\hat{g}}{\hat{T}}.$ By
Malgrange's theorem $\hat{f}$ is an entire function of
exponential type. If $\hat{T}$ is slowly decreasing this readily
implies that $f$ is compactly supported. However this extra
assumption is not needed as can be seen below. Let
$\psi\in\mathcal{S}(\mathbb{R}^{n})$ be such that $\hat{\psi}$ is
compactly supported. Then \beas \widehat{(\psi
f)}(x)&=&\hat{\psi}*\hat{f}(x)\\&=&
\int_{\mathbb{R}^{n}}\hat{\psi}(t)\hat{f}(x-t)dt,\eeas clearly
extends to an entire function of exponential type. Since $\psi
f\in L^{1}(\mathbb{R}^{n})$, $\widehat{(\psi f)}$ is bounded on
$R^{n}$. By the Paley-Wiener theorem we obtain that $\psi f$ is
compactly supported which finishes the proof.
\end{proof}

\begin{rem}
It is possible to weaken the condition grad $\hat{T}\neq 0$ on Reg
$Z_{\hat{T}}\cap\mathbb{R}^{n}$ as follows. Let $V$ be any global
irreducible component of $Z_{\hat{T}}$. Then there exist an entire
function $f_{V}$ whose zero locus is exactly $V$ and there exits a
positive integer $k$ such that $\frac{\hat{T}}{f_{V}^{k}}$ is non
zero on $V$. This is an application of Cousin $\textbf{II}$
problem on $\mathbb{C}^{n}$. See \cite{GH}. This function $f_{V}$
is unique upto multiplication by units. A close examination of the
proof shows that it suffices to assume that grad $f_{V}\neq 0$ on
$V \cap \mathbb{R}^n$ for all $V$. In particular when
$\hat{T}=f_{1}^{m_{1}}f_{2}^{m_{2}}\cdot\cdot\cdot f_{k}^{m_{k}}$
where $f_{1}$, $f_{2}$, $\cdot\cdot\cdot, f_{k}$ are irreducible
entire functions then it suffices to assume that grad $f_{j}\neq
0$ on $Z_{f_{j}}\cap\mathbb{R}^{n}$. Also see Hormander \cite{HO}
Theorem 3.1.
\end{rem}

Next we show that if $1\leq p\leq 2$ or $T$ is a radial
distribution then the condition on grad $\hat{T}$ is not needed
in the Theorem \textbf{2.1}.

\begin{thm}
Let $1\leq p\leq 2$ and $f\in L^{p}(\mathbb{R}^{n})$. If $f*T$ is
compactly supported and condition \textbf{(a)} of the previous
theorem is satisfied then $f$ is compactly supported.
\end{thm}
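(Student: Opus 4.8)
The plan is to reduce Theorem 2.6 to Theorem 2.1 by showing that, in the two special cases considered ($1\le p\le 2$, or $T$ radial), condition (a) already forces condition (b) to hold on the relevant set — or, more precisely, that the argument of Theorem 2.1 goes through without (b) once one observes that the obstruction handled by (b) is absent. The only place (b) was used in the proof of Theorem 2.1 is to guarantee $\mathrm{Ord}_x\widehat T = 1$, hence $\mathrm{Ord}_x\widehat g \ge \mathrm{Ord}_x\widehat T$ on $\mathrm{Reg}\,Z_{\widehat T}\cap\mathbb R^n$, so that Lemma 2.3 applies and $\widehat g/\widehat T$ is entire. So the heart of the matter is to recover this divisibility without the gradient hypothesis.

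First I would treat the case $1\le p\le 2$. Here $\widehat f$ makes sense as an $L^{p'}$ function (or tempered distribution) and the relation $f*T=g$ reads $\widehat f\,\widehat T = \widehat g$ as distributions. Lemma 2.4, applied to the smooth $(n-1)$-dimensional manifold $\mathrm{Reg}\,Z_{\widehat T}\cap\mathbb R^n$, still gives $\widehat g = 0$ on this manifold. Now I want to bootstrap to higher-order vanishing: on $\mathrm{Reg}\,Z_{\widehat T}$ the order of $\widehat T$ is some constant $m\ge 1$ on each component, and I must show $\widehat g$ vanishes to order $\ge m$ there. The idea is to differentiate: if $P(D)$ is a constant-coefficient operator, then $P(D)f * T = P(D)g$, so the same conclusion of Lemma 2.4 applies to $P(D)g$ for every $P$; equivalently, apply Lemma 2.4 not to $T$ but to $Q(D)T$ for suitable $Q$ — but one must be careful since $Q(D)$ changes the zero set. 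A cleaner route: write $\widehat T = u\cdot f_V^{m}$ locally near a regular point $x_0\in V\cap\mathbb R^n$ with $u\ne 0$ and $f_V$ having nonvanishing gradient (Remark 2.5, Cousin II); then $\widehat g = (\widehat g/\widehat T)\cdot u\cdot f_V^m$ would follow once we know $f_V^m \mid \widehat g$, which is exactly an order estimate. To get it, I'd use that $\widehat f$ is a genuine distribution when $p\le 2$ and that $\widehat f \widehat T=\widehat g$ forces, on the regular part, the Taylor expansion of $\widehat g$ transverse to $V$ to be divisible by the corresponding power of the defining function — a local computation in one transverse variable. This makes Lemma 2.3 applicable with $F=\widehat T$, $G=\widehat g$, and the rest of the proof of Theorem 2.1 (the $\delta$-term killed by Theorem 1.1, then Malgrange and Paley–Wiener) runs verbatim.

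For the radial case, $\widehat T$ is a radial entire function, so it is a function of $z_1^2+\cdots+z_n^2$; its irreducible components are spheres $|x|^2 = c$ (possibly complex $c$), and on each real component the order is forced to be even/structured in a way that, together with (a), again pins down the local divisibility. Concretely, $\widehat g$ is also radial (convolution of radial things, or one reduces to it), the one-variable problem is in the radial profile, and the order of vanishing of the profile of $\widehat T$ at a real zero controls everything; (a) ensures the real zero is genuinely $(n-1)$-dimensional, i.e. $c>0$, and radiality prevents the pathology that (b) was designed to exclude. So again Lemma 2.3 applies and we conclude as before.

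The main obstacle will be the bootstrap from simple vanishing ($\widehat g=0$ on $\mathrm{Reg}\,Z_{\widehat T}\cap\mathbb R^n$, which Lemma 2.4 gives directly) to the order inequality $\mathrm{Ord}_x\widehat g\ge \mathrm{Ord}_x\widehat T$ needed by Lemma 2.3, in the absence of the gradient hypothesis. When $p\le 2$ one has the distributional identity $\widehat f\widehat T=\widehat g$ to exploit, and the difficulty is purely that $\widehat f$ need not be a function on $Z_{\widehat T}$; one must argue that the singular support of $\widehat f$ cannot conspire to cancel the higher-order zeros of $\widehat T$, which should follow from the $C^1$-manifold carrier theorem (Theorem 1.1) applied after localizing. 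For general $p$ up to $2n/(n-1)$ without (b) one does not expect the result (that is the whole point of the remark), so it is essential that the argument genuinely use either $p\le 2$ or radiality; I expect the cleanest writeup to handle $p\le 2$ via Fourier transforms of tempered distributions and the radial case by a separate, short reduction to the one-dimensional profile.
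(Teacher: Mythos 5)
Your overall skeleton matches the paper's: reduce everything to the order inequality $\mathrm{Ord}_{x_0}\widehat{g}\geq \mathrm{Ord}_{x_0}\widehat{T}$ at regular real points, feed that into Lemma 2.3 to get $\widehat{g}/\widehat{T}$ entire, and finish as in Theorem 2.1. You also correctly identify that this order inequality is the only thing condition (b) was buying you. But you do not actually prove it, and the mechanism you sketch for it is the wrong one. You propose to extract higher-order vanishing of $\widehat{g}$ from the distributional identity $\widehat{f}\,\widehat{T}=\widehat{g}$ by ``a local computation in one transverse variable'' on Taylor expansions, and later concede this is ``the main obstacle.'' As a purely formal statement about distributions this divisibility is false: in one variable, $\widehat{T}(\xi)=\xi^{2}$ and $\widehat{f}=\mathrm{fp}\,\xi^{-2}$ give $\widehat{f}\,\widehat{T}\equiv 1$, which does not vanish at all at $\xi=0$. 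So nothing can come out of the identity alone; one must use quantitatively where $\widehat{f}$ lives, and that is precisely what the hypothesis $p\leq 2$ supplies and what your write-up never exploits.

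The paper's actual argument is short and concrete: mollify so that $f\in L^{2}$ and $g\in C_{c}^{\infty}$; then $\widehat{f}=\widehat{g}/\widehat{T}$ is an honest $L^{2}$ function (so there is no singular ``$\delta$-term'' to kill and no need for Theorem 1.1 here, contrary to your suggestion). Near a regular real point write $\widehat{T}(z)=(z_{n}-\varphi(z'))^{m_{1}}\psi_{1}(z)$ and $\widehat{g}(z)=(z_{n}-\varphi(z'))^{m_{2}}\psi_{2}(z)$ with $\psi_{1},\psi_{2}$ nonvanishing; the square-integrability of $\widehat{g}/\widehat{T}$ then forces
$\int_{[-a,a]^{n}}|x_{n}-\varphi(x')|^{-2(m_{1}-m_{2})}\,dx<\infty$, and a one-variable change of variables shows this fails unless $m_{2}\geq m_{1}$. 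This local integrability estimate is the missing idea in your proposal. Two smaller points: the statement you were asked to prove is only the $1\leq p\leq 2$ case, so your radial discussion belongs to a different theorem (2.7), where the paper uses an entirely different device (dividing out one spherical factor at a time via the operator $-\Delta-\lambda_{k}^{2}$); and the relevant consequence of $p\leq 2$ is not that $\widehat{f}$ is ``a genuine distribution'' (it is one for every $p$) but that by Hausdorff--Young it is a locally integrable function in $L^{p'}$.
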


\begin{proof}
Let $f*T=g$. Convolving with compactly supported approximate
identities we may assume that $f\in L^{2}(\mathbb{R}^{n})$ and
$g\in C_{c}^{\infty}$. Since $\hat{T}\hat{f}=\hat{g}$ and $f\in
L^{2}(\mathbb{R}^{n})$ we have
$\int_{\mathbb{R}^{n}}\left|\frac{\hat{g}}{\hat{T}}\right|^{2}<\infty$.
We will show that, if $x_{0}\in$Reg
$(Z_{\hat{T}})\cap\mathbb{R}^{n}$ then
Ord$_{x_{0}}(\hat{T})\leq$Ord$_{x_{0}}(\hat{g})$. Then we may
argue as in Theorem \textbf{2.1} to conclude that
$\frac{\hat{g}}{\hat{T}}$ is entire which will prove the theorem.
As in the proof of Theorem \textbf{2.1} we have
$Z_{\hat{T}}\subset Z_{\hat{g}}$. Without loss of generality we
can assume $x_{0}=0 .$ If $Ord_{x_0}(\hat{T}) = m_1$ and
$Ord_{x_0}( \hat{g}) = m_2$ then there exists holomorphic
functions $\varphi ,$  $\psi_1$ and $\psi_2$ such that $$
\hat{T}(z) = (z_n - \varphi(z^{'}))^{m_1} \psi_1(z) $$ and $$
\hat{g}(z) = (z_n - \varphi(z^{'}))^{m_2} \psi_2(z)$$ in a
neiborhood $V$ (in $\mathbb{C}^{n}$) of the origin, where $\psi_1$
and $\psi_2$ are zero free in $V.$ Here $z^{'} = (z_1, z_2, \cdots
z_{n-1}) \in \mathbb{C}^{n-1} .$

Since $\hat{g} / \hat{T} \in L^2,$ the above implies that,
$$\int_{[-a,a]^{n}}\frac{1}{|x_{n}-\varphi(x^{'})|^{2(m_{1}-m_{2})}}dx<\infty,$$
for some $a > 0.$  By a change of variable we get,
$$\int_{[-a,a]^{n-1}}\left(\int_{-a-\varphi(x^{'})}^{a-\varphi(x^{'})}\frac{1}{r^{2(m_{1}-m_{2})}}dr\right)dx^{'}<\infty.$$
Now, since $\varphi(0)=0$, if we choose $0<\varepsilon<a$, then
there exists $0<\delta< a$ such that
$|\phi(x)|<\varepsilon\vspace*{0.1in}~~\forall~
x^{'}\in[-\delta,\delta]^{n-1}$. Therefore,
$$\int_{[-\delta,\delta]^{n-1}}\left(\int_{-a+\varepsilon}^{a-\varepsilon}\frac{1}{r^{2(m_{1}-m_{2})}}dr\right)dx^{'}<\infty$$
implying that
$$\int_{-a+\varepsilon}^{a-\varepsilon}\frac{1}{r^{2(m_{1}-m_{2})}}dr<\infty.$$
Hence $m_{2} \geq m_{1}$, which finishes the proof.
\end{proof}

Next, suppose that $T$ is a radial distribution on
$\mathbb{R}^{n}$. Then $\hat{T}$ is a function of $(z_{1}^{2}+
z_{2}^{2} + \cdot\cdot +z_{n}^{2})^{\frac{1}{2}}$ and the
assignment $$\hat{T}(z_{1},z_{2},\cdot\cdot,z_{n})=G_{T}(s),$$
where $s^{2}=z_{1}^{2}+ z_{2}^{2} + \cdot\cdot +z_{n}^{2},$
defines an even entire function $G_{T}$ on the complex plane
$\mathbb{C}$ of exponential type and at most polynomial growth on
$\mathbb{R}$. The converse also holds. If the entire function
$G_{T}$ has only real zeros then $Z_{\hat{T}}$ (in
$\mathbb{C}^{n}$) is a disjoint union of sets of the form
$\{(z_{1},z_{2},\cdot\cdot,z_{n}):z_{1}^{2}+ z_{2}^{2} +
\cdot\cdot +z_{n}^{2}=a\}$ for $a>0$. It is easy to see that such
$T$ satisfies the condition $\textbf{(a)}$ of Theorem
\textbf{2.1}. Our next theorem shows that condition
$\textbf{(b)}$ of Theorem \textbf{2.1} is not necessary if we are
dealing with radial distributions of the above kind.

\begin{thm}
Let $T$ be a compactly supported radial distribution on
$\mathbb{R}^{n}$ such that the zeros of the entire function
$G_{T}(s)$ are contained in $\mathbb{R}-\{0\}$. If $f\in
L^{p}(\mathbb{R}^{n})$, $1\leq p\leq \frac{2n}{n-1}$ and $f*T$ is
compactly supported then $f$ is compactly supported.
\end{thm}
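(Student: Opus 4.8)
The plan is to deduce the theorem from Theorem \textbf{2.1} by stripping off the zeros of $G_T$ one complexified sphere at a time, after a preliminary smoothing of $T$. For the smoothing, recall that the Fourier transform of the radial kernel $|B_\varepsilon|^{-1}\chi_{B_\varepsilon}$ is a constant multiple of $s\mapsto J_{n/2}(\varepsilon s)/s^{n/2}$, an even entire function of exponential type all of whose zeros are real and nonzero. Convolving $T$ with a sufficiently high convolution power of this kernel leaves $f$ unchanged, replaces $g=f*T$ by a compactly supported function, and replaces $T$ by a compactly supported radial function of class $C^K$ for any prescribed finite $K$, while the new $G_T$ still has all its zeros in $\mathbb{R}\setminus\{0\}$. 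Choosing the convolution power large enough (in terms of $n$ and the order of $T$) we may assume from now on that $T\in C^K_c(\mathbb{R}^{n})$ with $K>n+\operatorname{ord}(T)$; then for every positive zero $b$ of $G_T$ and every $l$ not exceeding its multiplicity the quotient $\widehat{T}(\xi)/(|\xi|^2-b^2)^l$ is bounded (bounded near $|\xi|=b$ because $G_T$ vanishes there to order at least $l$, and decaying faster than $\widehat{T}$ at infinity), hence lies in $L^1(\mathbb{R}^{n})$ and is the Fourier transform of a compactly supported $L^1$ radial function.

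Next, fix a positive zero $b$ of $G_T$ of multiplicity $\mu$ and write $G_T(s)=(s^2-b^2)^\mu H(s)$ with $H$ even entire of exponential type, of polynomial growth on $\mathbb{R}$, and $H(\pm b)\neq 0$. For $0\le l\le\mu$ let $U_l$ be the compactly supported radial $L^1$ function with $\widehat{U_l}(\xi)=(|\xi|^2-b^2)^{\mu-l}H(|\xi|)$, so $U_0=T$, $\widehat{U_\mu}$ is nonvanishing on $\{|\xi|=b\}$, and $U_l=(-\Delta-b^2)U_{l+1}$. Put $g_l=f*U_l\in L^p(\mathbb{R}^{n})$ (Young's inequality), so $g_0=g$ is compactly supported and $g_l=(-\Delta-b^2)g_{l+1}$. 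The Fourier transform $|\xi|^2-b^2$ of the compactly supported distribution $(-\Delta-b^2)\delta$ has zero set the irreducible complex quadric $\{z_1^2+\cdots+z_n^2=b^2\}$, which meets $\mathbb{R}^{n}$ in the sphere of radius $b$ (real dimension $n-1$) and on which the gradient $2z$ is nonzero since $b\neq 0$; thus conditions \textbf{(a)} and \textbf{(b)} of Theorem \textbf{2.1} hold for this distribution. Applying Theorem \textbf{2.1} successively for $l=0,1,\dots,\mu-1$ (at step $l$, $g_{l+1}\in L^p$ and $(-\Delta-b^2)g_{l+1}=g_l$ is compactly supported) shows that $g_\mu=f*U_\mu$ is compactly supported. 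Taking Fourier transforms in $g=(-\Delta-b^2)^\mu g_\mu$ gives the identity of entire functions $\widehat{g}(z)=(z_1^2+\cdots+z_n^2-b^2)^\mu\,\widehat{g_\mu}(z)$, so $\widehat{g}$ vanishes to order at least $\mu$ along that quadric; in particular at a point $x$ of the real sphere $|x|=b$, where also $\widehat{T}(z)=(z_1^2+\cdots+z_n^2-b^2)^\mu\cdot(\text{unit})$ so that $\mathrm{Ord}_x\widehat{T}=\mu$, we obtain $\mathrm{Ord}_x\widehat{T}\le\mathrm{Ord}_x\widehat{g}$. Since $Z_{\widehat{T}}$ is the disjoint union, over the positive zeros $b$ of $G_T$, of the smooth quadrics $\{z_1^2+\cdots+z_n^2=b^2\}$, each meeting $\mathbb{R}^{n}$ in a sphere of dimension $n-1$, the hypotheses of Lemma \textbf{2.3} (with $F=\widehat{T}$, $G=\widehat{g}$) are met, so $\widehat{g}/\widehat{T}$ is entire.

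To conclude, from $\widehat{f}\,\widehat{T}=\widehat{g}$ one gets $\widehat{f}=\widehat{g}/\widehat{T}+\delta$ with $\delta$ a distribution supported on $Z_{\widehat{T}}\cap\mathbb{R}^{n}$, a $C^\infty$ manifold of dimension $n-1$. For $\phi\in C^\infty_c(\mathbb{R}^{n})$, multiplying by $\phi$ and taking inverse Fourier transforms yields ${(\phi\delta)}^{\check{}}=\check{\phi}*f-h$ with $h=\big(\phi\,\widehat{g}/\widehat{T}\big)^{\check{}}\in\mathcal{S}(\mathbb{R}^{n})$ and $\check{\phi}*f\in L^p(\mathbb{R}^{n})$; the Fourier transform $\phi\delta$ of ${(\phi\delta)}^{\check{}}$ is carried by a $C^1$ manifold of dimension $d=n-1<n$ and $1\le p\le\frac{2n}{n-1}=\frac{2n}{d}$, so Theorem \textbf{1.1} forces ${(\phi\delta)}^{\check{}}=0$. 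As $\phi$ was arbitrary, $\delta=0$ and $\widehat{f}=\widehat{g}/\widehat{T}$, which by Malgrange's theorem is entire of exponential type. Finally, for $\psi\in\mathcal{S}(\mathbb{R}^{n})$ with $\widehat{\psi}\in C^\infty_c$, the function $\widehat{\psi f}=\widehat{\psi}*\widehat{f}$ extends to an entire function of exponential type (type bounded by that of $\widehat{f}$) and is bounded on $\mathbb{R}^{n}$ because $\psi f\in L^1(\mathbb{R}^{n})$; by the Paley--Wiener theorem $\psi f$ is supported in a ball of radius independent of $\psi$, and letting $\widehat{\psi}$ run through an approximate identity shows that $f$ is compactly supported.

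The step I expect to be the main obstacle is the peeling: one must split $G_T$ into the complexified spheres carrying its zeros and strip off one factor of the Helmholtz operator $(-\Delta-b^2)$ at a time — a single such factor satisfies both hypotheses of Theorem \textbf{2.1}, whereas $(-\Delta-b^2)^\mu$ with $\mu\ge 2$ violates \textbf{(b)} — while keeping every intermediate convolution $f*U_l$ a genuine $L^p$ function; it is precisely this last requirement that forces the preliminary smoothing of $T$ by a mollifier whose Fourier transform has only real, nonzero zeros, so that the hypothesis on $G_T$ is not disturbed. Once $g_\mu$ is known to be compactly supported, the order inequality needed in Lemma \textbf{2.3} at every real point of $Z_{\widehat{T}}$ is automatic.
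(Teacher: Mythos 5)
Your proposal is correct and takes essentially the same route as the paper: peel off one Helmholtz factor $(-\Delta-\lambda_k^{2})$ at a time, observe that this distribution supported at the origin satisfies conditions \textbf{(a)} and \textbf{(b)} of Theorem \textbf{2.1}, iterate to get that $(z_1^{2}+\cdots+z_n^{2}-\lambda_k^{2})^{m_k}$ divides $\widehat{g}$, conclude that $\widehat{g}/\widehat{T}$ is entire, and finish as in Theorem \textbf{2.1}. The only (harmless) technical difference is that you mollify $T$ by convolution powers of $|B_\varepsilon|^{-1}\chi_{B_\varepsilon}$ so that the intermediate convolutions $f*U_l$ are genuine $L^{p}$ functions, whereas the paper instead convolves $f$ with a radial $C_c^{\infty}$ approximate identity to the same effect.
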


\begin{proof}
Let $f*T=g$ and let
$0<\lambda_{1}<\lambda_{2}<\lambda_{3}\cdot\cdot\cdot$ be the
positive zeros of $G_{T}(s)$ with multiplicities
$m_{1},m_{2},\cdot\cdot\cdot.$ We have $\hat{T}\hat{f}=\hat{g}.$
As in the previous case we will show that
$\frac{\hat{g}}{\hat{T}}$ is entire. It clearly suffices to show
that, $(z_{1}^{2}+ z_{2}^{2} + \cdot\cdot
+z_{n}^{2}-\lambda_{k}^{2})^{m_{k}}$ divides $\hat{g}$. Now
$\frac{G_{T}(s)}{s^{2}-\lambda_{k}^{2}}$ is an even entire
function of exponential type on $\mathbb{C}$ and is of at most
polynomial growth on $\mathbb{R}$. It follows that there exits a
compactly supported radial distribution $V$ on $\mathbb{R}^{n}$
such that
$$G_{V}(s)=\frac{G_{T}(s)}{s^{2}-\lambda_{k}^{2}}.$$ Now,
$$(z_{1}^{2}+ z_{2}^{2} + \cdot\cdot
+z_{n}^{2}-\lambda_{k}^{2})\frac{\hat{T}}{(z_{1}^{2}+ z_{2}^{2} +
\cdot\cdot +z_{n}^{2}-\lambda_{k}^{2})}\hat{f}=\hat{g}$$ implies
that \bea \label{eqn a}(-\Delta - \lambda_{k}^{2})(V*f)=g.\eea
Convolving $f$ with a radial $C_{c}^{\infty}$ function we may
assume that $V*f\in L^{p}(\mathbb{R}^{n})$, $1\leq p\leq
\frac{2n}{n-1}$. Note that $-\Delta - \lambda_{k}^{2}$ is a
distribution supported at the origin and satisfies the conditions
in Theorem \textbf{2.1}. It follows that $V*f$ is compactly
supported. Taking Fourier transform in (\ref{eqn a}) we obtain
that $(z_{1}^{2}+ z_{2}^{2} + \cdot\cdot
+z_{n}^{2}-\lambda_{k}^{2})$ divides $\hat{g}$. This surely can be
repeated to prove that $\frac{\hat{g}}{\hat{T}}$ is entire. The
proof now can be completed as in the previous case.
\end{proof}

In our next result we show that assuming $T$ is a compactly
supported positive distribution(i.e $T(\phi)\geq 0$ if $\phi\geq
0$) gives us precise information about the support of the
function $f$. Recall that a positive distribution is a positive
measure.

\begin{thm}
Let $T$ be a compactly supported radial positive measure with
supp $T=\overline{B_{R_{1}}}$. Assume that the entire function
$G_{T}(s)$ has only real zeros. If $f\in L^{p}(\mathbb{R}^{n})$,
$1\leq p\leq \frac{2n}{n-1}$ and $f*T=g$ with supp $g\subseteq
B_{R_{2}}$ then $f$ is compactly supported and supp $f\subseteq
B_{R_{2}-R_{1}}.$
\end{thm}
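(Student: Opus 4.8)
The plan is to combine the previous theorems (which already tell us $f$ is compactly supported) with a support-shrinking argument based on the positivity of $T$ and Helgason-type support theorems for spheres. First I would apply Theorem~\textbf{2.5} (since $G_T$ has only real zeros, condition \textbf{(a)} of Theorem~\textbf{2.1} holds as noted in the text) to conclude that $f$ is already compactly supported; say $\operatorname{supp} f\subseteq B_{R_3}$ for some $R_3>0$. The remaining task is to improve this to $\operatorname{supp} f\subseteq B_{R_2-R_1}$. (Implicitly the statement presumes $R_2>R_1$; otherwise there is nothing to prove beyond Theorem~\textbf{2.5}.)

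Next I would reduce to the smooth case by convolving with a radial approximate identity $\psi_\varepsilon\in C_c^\infty$ supported in $B_\varepsilon$: then $(f*\psi_\varepsilon)*T = g*\psi_\varepsilon$, with $f*\psi_\varepsilon$ smooth and compactly supported and $g*\psi_\varepsilon$ supported in $B_{R_2+\varepsilon}$. Since $\operatorname{supp} f\subseteq B_{R_3}$ does not depend on $\varepsilon$, letting $\varepsilon\to0$ at the end recovers the sharp bound for $f$ itself. So assume $f\in C_c^\infty(\mathbb R^n)$, $\operatorname{supp} f\subseteq B_{R_3}$, and $\operatorname{supp} g\subseteq B_{R_2}$. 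The heart of the matter is to show that $f$ vanishes on $\{|x|>R_2-R_1\}$. Fix a point $x_0$ with $|x_0|=\rho>R_2-R_1$ and work near the sphere $S_\rho=\{|x|=\rho\}$. Because $T$ is a radial positive measure with $\operatorname{supp} T=\overline{B_{R_1}}$, the convolution $f*T$ evaluated at a point $x$ with $|x|=\rho+R_1$ is an average (against a positive measure of total mass $\|T\|$) of values of $f$ over the set $\{y: x-y\in\operatorname{supp} T\}$, which is contained in the annulus $\rho-R_1\le |y|\le \rho+R_1$ — but more precisely, for $x$ with $|x|$ as large as possible given $|y|\le\rho+\eta$, the mass of $T$ that can contribute is concentrated where $x-y$ lies near the sphere $|x-y|=R_1$. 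The clean way to exploit this is: suppose, for contradiction, $f$ does not vanish identically on $\{|x|>R_2-R_1\}$, and let $\rho_0=\sup\{|x|:x\in\operatorname{supp} f\}$, so $\rho_0>R_2-R_1$. Then for $x$ with $|x|=\rho_0+R_1$, the function $f*T(x)=\int f(x-y)\,dT(y)$ is an integral of a nonnegative-on-a-neighborhood quantity — here is where I would need a genuinely positive lower bound, so the argument must be localized and use that $T$ charges every neighborhood of the boundary sphere $|y|=R_1$ (since $\operatorname{supp} T=\overline{B_{R_1}}$ exactly).

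The main obstacle — and the step I expect to require the most care — is converting ``$f$ does not vanish near the outer sphere $S_{\rho_0}$'' into ``$f*T$ does not vanish near $S_{\rho_0+R_1}$,'' because $f$ is not assumed to have a sign, so cancellation in the integral $\int f(x-y)\,dT(y)$ is a priori possible. The standard device to defeat this is to integrate against spherical means: for $x$ on the sphere $S_{\rho_0+R_1}$, consider $\int_{S^{n-1}} (f*T)((\rho_0+R_1)\omega)\,F(\omega)\,d\omega$ for suitable $F$, or equivalently pair $f*T$ with the surface measure $\mu_{\rho_0+R_1}$ and use that $\widehat{\mu_r}$ and $G_T$ are both radial, so that $f*T*\mu_{\rho_0+R_1}$ is controlled by $f$ on a thin annulus near $S_{\rho_0}$ only through the positive measure $T$; positivity then prevents the annular average of $f$ against the positive kernel from vanishing unless $f$ itself vanishes there. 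Concretely: since $g$ is supported in $B_{R_2}\subseteq B_{\rho_0+R_1}$ (because $\rho_0>R_2-R_1$ means $\rho_0+R_1>R_2$), the sphere $S_{\rho_0+R_1}$ lies outside $\operatorname{supp} g$, so $g*\mu_{\rho_0+R_1}$ restricted appropriately and $(f*T)*\mu_{\rho_0+R_1}$ must reconcile; tracking the support of $f*\mu_{\rho_0+R_1}$ (contained in $\rho_0-$something $\le|y|\le\rho_0+R_1$... ) and feeding in positivity of $T$ forces $f=0$ on a neighborhood of $S_{\rho_0}$ inside $\operatorname{supp} f$, contradicting the definition of $\rho_0$ as the supremum of $|x|$ over $\operatorname{supp} f$. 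This contradiction gives $\operatorname{supp} f\subseteq \overline{B_{R_2-R_1}}$, and undoing the approximate identity regularization yields $\operatorname{supp} f\subseteq B_{R_2-R_1}$ as claimed.
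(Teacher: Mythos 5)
Your overall strategy diverges from the paper's at the crucial point, and the step you yourself flag as ``the main obstacle'' is a genuine gap, not a technicality. Getting from ``$f$ does not vanish near the outermost sphere $S_{\rho_0}$ of its support'' to ``$f*T$ does not vanish near $S_{\rho_0+R_1}$'' cannot be done by averaging against positive kernels: for $x$ with $|x|$ slightly less than $\rho_0+R_1$, the integral $\int f(x-y)\,dT(y)$ sees $f$ only on a small cap near $\rho_0 x/|x|$, but since $f$ carries no sign, this integral --- and likewise any further average of it against $\mu_{\rho_0+R_1}$ or any positive weight $F(\omega)$ --- can perfectly well vanish on the whole annulus $R_2<|x|<\rho_0+R_1$ without $f$ vanishing near $S_{\rho_0}$. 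Positivity of a measure lets you conclude that the integral of a \emph{nonnegative} integrand is positive; it gives no pointwise nonvanishing conclusion for a sign-changing integrand, and integrating against spherical means does not manufacture a sign. So the contradiction with the definition of $\rho_0$ is never actually reached. The sharpness of the constant $R_2-R_1$ is a genuinely Fourier-analytic fact here (the paper notes Volchkov's alternative proof, which is also not a naive averaging argument).

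The paper exploits positivity in a completely different place: on the Fourier transform side. Having obtained compact support of $f$ from Theorem 2.7 (your citation of Theorem 2.5 is a misreference, but the reduction itself is right), it writes $\hat f=\hat g/\hat\mu$, entire of exponential type by Malgrange's theorem and bounded on $\mathbb{R}^n$ since $f\in L^1$, and then uses positivity of $\mu$ together with $\operatorname{supp}\mu=\overline{B_{R_1}}$ to get the lower bound $|\hat\mu(iy)|\geq c_\epsilon e^{(R_1-\epsilon)|y|}$: the integrand $e^{x\cdot y}$ \emph{is} positive, so one may restrict the integral to a small cap around $R_1 y/|y|$ where $x\cdot y\geq (R_1-\epsilon)|y|$. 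This yields $|\hat f(iy)|\leq C_\epsilon e^{(R_2-R_1+\epsilon)|y|}$, and the Phragm\'en--Lindel\"of lemma (Lemma 2.9), applied along complex lines, upgrades the bounds on the real and imaginary axes to all of $\mathbb{C}^n$; the Paley--Wiener theorem then gives $\operatorname{supp} f\subseteq B_{R_2-R_1}$. This is the missing mechanism in your proposal: positivity must be applied where the integrand is genuinely positive, namely to $e^{x\cdot y}$, not to $f$.
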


We start with the following lemma which is a simple application
of the Phragman-Lindeloff theorem.

\begin{lem}
Let $A(s)$ be an entire function of exponential type on
$\mathbb{C}$ and $0 < R_1 < R_2 < \infty. $ Suppose that
$|A(s)|\leq e^{R_{2}|s|}$ $\forall s\in \mathbb{C}$ and
\begin{itemize}
\item [\textbf{(a)}] $|A(is)|\leq e^{(R_{2}-R_{1})|s|}~~~\forall s\in\mathbb{R}.$
\item[\textbf{(b)}] $|A(s)|\leq e^{(R_{2}-R_{1})|s|}~~~\forall s\in\mathbb{R}.$
\end{itemize}
Then $|A(s)|\leq e^{(R_{2}-R_{1})|s|}~~~\forall s\in\mathbb{C}.$
\end{lem}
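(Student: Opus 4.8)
The plan is to prove this by applying the Phragmén–Lindelöf principle to each of the four quadrantal sectors of the complex plane separately. The function $A$ is entire of exponential type, so there is a constant $C$ with $|A(s)|\le e^{R_2|s|}$ for all $s$; on the real and imaginary axes we have the sharper bound $e^{(R_2-R_1)|s|}$ by hypotheses \textbf{(a)} and \textbf{(b)}. The goal is to upgrade the sharper bound to all of $\mathbb{C}$.

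First I would fix attention on the closed first quadrant $Q=\{s=re^{i\theta}: 0\le\theta\le\pi/2\}$, whose bounding rays are the positive real axis and the positive imaginary axis. On both of these rays $|A(s)|\le e^{(R_2-R_1)|s|}$, while throughout $Q$ we have the a priori bound $|A(s)|\le e^{R_2|s|}$, which is of order $1$ (exponential type). Since the opening angle of $Q$ is $\pi/2 < \pi$, the Phragmén–Lindelöf theorem for a sector (see e.g.\ the standard form: if $h$ is holomorphic on a sector of opening $<\pi$, of finite exponential type there, and bounded by $M$ on the two bounding rays, then it is bounded by $M$ throughout — applied here to $h(s)=A(s)e^{-(R_2-R_1)s\cdot(\text{rotation})}$ or more cleanly to $A(s)e^{i(R_2-R_1)\zeta}$ after mapping the quadrant to a half-plane) forces $|A(s)|\le e^{(R_2-R_1)|s|}$ on all of $Q$. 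Concretely, one considers $B(s)=A(s)\exp\bigl((R_2-R_1)(is-s)/\sqrt2\cdot(\dots)\bigr)$; the slick route is: on the quadrant, the function $s\mapsto A(s)\,e^{-(R_2-R_1)|s|}$ is not holomorphic, so instead write $w=s^2$ mapping $Q$ onto the upper half-plane and apply Phragmén–Lindelöf for a half-plane to $A(\sqrt w)$ together with the subexponential-in-$\sqrt w$, i.e.\ square-root-type, growth — but the cleanest is simply to invoke the sector version directly with the auxiliary factor $\exp\!\bigl(-(R_2-R_1)e^{i\alpha}s\bigr)$ for suitable $\alpha$ so that its modulus on each bounding ray is exactly $e^{-(R_2-R_1)|s|}$ and its growth rate is controlled.

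Then I would repeat the identical argument on the other three closed quadrants: the second quadrant (bounded by the positive imaginary axis and the negative real axis), the third (negative real and negative imaginary axes), and the fourth (negative imaginary and positive real axes). On each, the two bounding rays lie in $\mathbb{R}\cup i\mathbb{R}$, so hypotheses \textbf{(a)} and \textbf{(b)} give $|A(s)|\le e^{(R_2-R_1)|s|}$ there, the opening angle is again $\pi/2<\pi$, and the global bound $|A(s)|\le e^{R_2|s|}$ provides the admissible growth. Phragmén–Lindelöf then yields $|A(s)|\le e^{(R_2-R_1)|s|}$ on each quadrant, and since the four closed quadrants cover $\mathbb{C}$, we conclude $|A(s)|\le e^{(R_2-R_1)|s|}$ for all $s\in\mathbb{C}$, as claimed.

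The main obstacle — really the only delicate point — is getting the auxiliary exponential factor right so that Phragmén–Lindelöf applies with the correct constant: one needs a holomorphic multiplier $e^{c s}$ (with $c$ a suitable complex constant depending on the quadrant, chosen so that $\mathrm{Re}(cs)=-(R_2-R_1)|s|$ on both bounding rays of that quadrant — e.g.\ for the first quadrant $c=-(R_2-R_1)e^{-i\pi/4}$, since then on $\theta=0$ and $\theta=\pi/2$ one gets $\mathrm{Re}(cs)=-(R_2-R_1)|s|\cos(\pi/4)$, which only gives the bound up to the factor $\cos(\pi/4)$). Because a single linear exponential cannot make $\mathrm{Re}(cs)$ equal $-|s|$ on two rays $\pi/2$ apart, the honest fix is to first apply the conformal map $s\mapsto s^2$ sending each quadrant to a half-plane: on a half-plane the two bounding rays are antipodal, $e^{cs}$ with $c$ real works, and the Phragmén–Lindelöf theorem for a half-plane (with the square-root growth of $A(\sqrt{\cdot})$ absorbed, which is permissible since it is $o(|w|)$ hence does not obstruct the half-plane version applied to functions of order $\le 1$) delivers the sharp bound. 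I would carry out the details in this $w=s^2$ picture, noting that $A$ is even or can be treated on each quadrant separately so that $A(\sqrt w)$ is well-defined and single-valued on the relevant half-plane.
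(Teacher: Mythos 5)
Your overall strategy (quadrant-by-quadrant Phragm\'en--Lindel\"of) is the same as the paper's, and you have correctly isolated the one real difficulty: there is no holomorphic multiplier $e^{cs}$ whose modulus equals $e^{-(R_2-R_1)|s|}$ on two rays that are $\pi/2$ apart. Unfortunately your proposed repair does not close this gap. After the substitution $w=s^2$ the boundary bound on the half-plane is $e^{(R_2-R_1)|w|^{1/2}}$, which is not a constant, so the half-plane Phragm\'en--Lindel\"of theorem does not apply directly; and a multiplier $e^{cw}$ with $c$ real has modulus $e^{c\,\mathrm{Re}\,w}$ on the boundary, which has the wrong growth rate ($|w|$ rather than $|w|^{1/2}$) and the wrong sign on one of the two boundary half-lines. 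The obstruction you noticed is in fact fatal to any argument of this type: for a function of exponential type, trigonometric convexity of the indicator $h(\theta)=\limsup_{r\to\infty} r^{-1}\log|A(re^{i\theta})|$ yields only $h(\theta)\le (R_2-R_1)(\cos\theta+\sin\theta)$ in the first quadrant from $h(0),h(\pi/2)\le R_2-R_1$, and the right-hand side reaches $\sqrt2\,(R_2-R_1)$ at $\theta=\pi/4$; this is sharp. Indeed the lemma as stated is false: with $R_1=1$, $R_2=2$ the function $A(s)=e^{(1+i)s}$ satisfies $|A(s)|=e^{\mathrm{Re}\,s-\mathrm{Im}\,s}\le e^{\sqrt2|s|}\le e^{2|s|}$, together with $|A(x)|=e^{x}\le e^{|x|}$ and $|A(ix)|=e^{-x}\le e^{|x|}$ for real $x$, yet $|A(re^{-i\pi/4})|=e^{\sqrt2\,r}>e^{r}$.

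For what it is worth, the paper's own proof is essentially the ``naive multiplier'' argument you rightly rejected: it sets $H(s)=A(s)e^{-(R_2-R_1)s}$ and asserts that $H$ is bounded on both axes, which fails on the imaginary axis (where $|e^{-(R_2-R_1)s}|=1$) and on the negative real axis. What is actually needed, and available, in the application (Theorem 2.8) is a different statement: there $A$ is the restriction of $\hat f$ to a complex line, hence \emph{bounded} on $\mathbb{R}$ because $f\in L^1$, not merely $O(e^{(R_2-R_1)|s|})$, and the conclusion one wants is $|A(s)|\le C e^{(R_2-R_1)|\mathrm{Im}\,s|}$. With hypothesis \textbf{(b)} strengthened to $|A|\le C$ on $\mathbb{R}$ and the conclusion restated in terms of $|\mathrm{Im}\,s|$, your quadrant scheme does work: on the first quadrant take $H(s)=A(s)e^{i(R_2-R_1)s}$, which is bounded by $C$ on both bounding rays and of exponential type in the quadrant, so Phragm\'en--Lindel\"of gives $|A(s)|\le Ce^{(R_2-R_1)\mathrm{Im}\,s}$ there; the remaining quadrants are handled with the multipliers $e^{\pm i(R_2-R_1)s}$. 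I would recommend proving and using that corrected lemma rather than the one stated.
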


\begin{proof}
Define
$$H(s)=\frac{A(s)}{e^{(R_{2}-R_{1})(s)}},~~~s\in\mathbb{C}.$$ By
the given condition $H$ is an entire functon of exponential type
on $\mathbb{C}$. Also $H$ is bounded on real and imaginary axis.
Now consider the region $\Omega=\{s:\txt{Im}~s>0
~~\txt{and}~~\txt{Re}~s>0 \}$ which is a sector of angle
$\frac{\pi}{2}$. Then $H$ is bounded on $\partial\Omega$ and we
can find $P>0$ and $b<2$ such that $H(s)\leq
Pe^{|s|^{b}}~~~\forall z\in\Omega.$ By the Phragman-Lindeloff
theorem $H$ is bounded on $\Omega$. We can repeat the argument in
other quadrants. Hence the lemma follows.
\end{proof}

\begin{proof} [Proof of Theorem {\rm{\textbf{2.8}}}] Let $\mu$ be the compactly supported radial positive measure which defines the distribution $T$.
Then $f*\mu=g$. By Theorem \textbf{2.7} we already know that $f$ is compactly supported. In particular $f\in L^{1}(\mathbb{R}^{n})$.
Also $\hat{f}=\frac{\hat{g}}{\hat{\mu}}$ is an entire function of exponential type(by Malgrange's theorem).
Proof will be completed by Lemma \textbf{2.9} and the Paley-Wiener theorem once we prove that
$$|\hat{\mu}(iy)|\geq c_{\epsilon}e^{(R_{1}-\epsilon)|y|}~~\forall\epsilon>0,~~\forall y\in\mathbb{R}^{n}.$$
 Now, $$\hat{\mu}(iy)=\int_{|x|\leq R_{1}}e^{x\cdot y}d\mu(x).$$ Given $\epsilon> 0$, it is possible to choose
 a fixed radius $\delta>0$ such that $$x\cdot y\geq (R_{1}-\epsilon)|y|$$ for all $x$ in a $\delta-$nbd $B_{\delta}$ of $R_{1}\frac{y}{|y|}$.
 Hence \beas\hat{\mu}(iy)&\geq& \int_{x\in B_{\delta}}e^{x\cdot y}d\mu(x)\\&\geq & c(\delta)e^{(R_{1}-\epsilon)|y|}.\eeas
 Notice that we need supp $\mu= \overline{B_{R_{1}}}$ here. This finishes the proof.
\end{proof}

\begin{rem}
When $T=\chi_{ B_{r}}$ or $\mu_{r}$ this improves the result of
Sitaram in \cite{S}. Theorem \textbf{2.8} is also proved by
Volchkov in \cite{V} in a different way.
\end{rem}

The following theorem shows that the class of distributions which
satisfies the conditions in Theorem \textbf{2.7} is large. Notice
that if $G$ is an even entire function of exponential type on
$\mathbb{C}$ whose zeros are all nonzero reals and $T$ is a
radial, compactly supported distribution on $\mathbb{R}^{n}$
defined by $$\hat{T}(z_{1},z_{2},\cdot\cdot\cdot
z_{n})=G((z_{1}^{2}+z_{2}^{2}+\cdot\cdot\cdot
+z_{n}^{2})^{\frac{1}{2}})$$ then $T$ satisfies the conditions in
Theorem \textbf{2.7}

\begin{thm}
Let $\phi:\mathbb{R}\rightarrow\mathbb{R}$ be a positive even
$C^{2}$- function. Assume that $\phi$ is increasing on $[0,1].$
Then the entire function (on $\mathbb{C}$)
$$G(z):=\int_{-1}^{1}\phi(t)e^{-itz}dt$$ has only real zeros.
\end{thm}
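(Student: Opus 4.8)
The plan is to exploit the fact that the Fourier transform of a positive, even, increasing-on-$[0,1]$ density must have its zeros controlled by a variational/monotonicity argument. First I would write $G(z) = \int_{-1}^{1}\phi(t)e^{-itz}\,dt = 2\int_{0}^{1}\phi(t)\cos(tz)\,dt$ using evenness of $\phi$, so that $G$ is even and real on $\mathbb{R}$; of exponential type $1$ and polynomially bounded on $\mathbb{R}$ by Paley--Wiener. Suppose for contradiction that $G$ has a non-real zero $z_{0}=a+ib$ with $b>0$ (zeros come in conjugate pairs and, since $G$ is even, also in the pair $\pm z_0$). The strategy is to show that at such a point $\mathrm{Re}\,G$ and $\mathrm{Im}\,G$ cannot simultaneously vanish because the sign of an associated integral is forced by the monotonicity of $\phi$.

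The key computational step: integrate by parts, writing $\phi(t)\cos(tz) = \phi(t)\,\frac{d}{dt}\!\left(\frac{\sin(tz)}{z}\right)$, to get
\[
G(z) = \frac{2\phi(1)\sin z}{z} - \frac{2}{z}\int_{0}^{1}\phi'(t)\sin(tz)\,dt .
\]
Since $\phi$ is increasing on $[0,1]$ we have $\phi'(t)\ge 0$ there; a second integration by parts against $\phi''$ is available if one wants $C^2$ smoothness to control endpoint terms, but the essential point is that $\phi'\ge 0$ is a nonnegative weight on $[0,1]$. If $G(z_0)=0$ then $\phi(1)\sin z_0 = \int_0^1 \phi'(t)\sin(tz_0)\,dt$. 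Now I would compare moduli: for $z_0 = a+ib$ with $0<t\le 1$, the function $t\mapsto |\sin(tz_0)|$ is, after the elementary estimate $|\sin(tz_0)|^2 = \sin^2(ta) + \sinh^2(tb)$, strictly increasing in $t$ on $(0,1]$ (both summands are), so $|\sin(tz_0)| < |\sin z_0|$ for $t<1$. Hence
\[
\Bigl|\int_0^1 \phi'(t)\sin(tz_0)\,dt\Bigr| \le \int_0^1 \phi'(t)\,|\sin(tz_0)|\,dt < |\sin z_0|\int_0^1\phi'(t)\,dt = \phi(1)\,|\sin z_0|,
\]
using $\int_0^1\phi'=\phi(1)-\phi(0)<\phi(1)$ since $\phi(0)>0$ (positivity of $\phi$). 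This contradicts $\phi(1)\sin z_0 = \int_0^1\phi'(t)\sin(tz_0)\,dt$, unless $\sin z_0 = 0$ — but the real zeros of $\sin$ are real, so $z_0$ is real, a contradiction. Therefore every zero of $G$ is real.

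The main obstacle is making the strict inequality genuinely strict and handling the degenerate sub-cases: if $b=0$ the claimed monotonicity of $|\sin(tz_0)|$ degenerates (one must instead rule out that $a$ is already a zero, but that is fine since we only assumed $z_0$ non-real), and one must be careful when $\phi'$ vanishes on a subinterval so that the inequality $\int\phi'|\sin(tz_0)| < |\sin z_0|\int\phi'$ could fail to be strict — this is where $\phi$ being $C^2$ and \emph{genuinely} increasing (not merely nondecreasing) is used, or alternatively one notes $\phi'$ cannot vanish on all of $(0,1)$ since then $\phi$ would be constant there, contradicting "increasing." I would also double-check the $z_0=0$ case separately: $G(0) = 2\int_0^1\phi > 0$ by positivity, so $0$ is not a zero, consistent with everything. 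The cleanest writeup packages the two integrations by parts so the endpoint term is exactly $\phi(1)\sin z/z$ and the remaining integral has the nonnegative weight $\phi'$, then runs the modulus comparison above.
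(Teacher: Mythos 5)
Your argument hinges on the claim that for $z_{0}=a+ib$ with $b>0$ the function $t\mapsto|\sin(tz_{0})|$ is strictly increasing on $(0,1]$, ``since both summands'' in $|\sin(tz_{0})|^{2}=\sin^{2}(ta)+\sinh^{2}(tb)$ are increasing. That is false: $\sinh^{2}(tb)$ is increasing in $t$, but $\sin^{2}(ta)$ oscillates and is increasing only while $t|a|\leq\pi/2$. Concretely, take $z_{0}=\pi+ib$ with $b$ small; then $|\sin z_{0}|^{2}=\sinh^{2}b\approx b^{2}$, while $|\sin(z_{0}/2)|^{2}=1+\sinh^{2}(b/2)>1$, so $|\sin(tz_{0})|$ at $t=1/2$ vastly exceeds $|\sin z_{0}|$. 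Since the zeros of $G$ necessarily have real parts of size comparable to multiples of $\pi$ (as for $\sin z/z$), the regime $|a|>\pi/2$ is exactly the one you cannot avoid, and the inequality $\bigl|\int_{0}^{1}\phi'(t)\sin(tz_{0})\,dt\bigr|<\phi(1)|\sin z_{0}|$ simply does not follow. The integration by parts and the observation $\int_{0}^{1}\phi'=\phi(1)-\phi(0)<\phi(1)$ are fine, but without the monotonicity of $|\sin(tz_{0})|$ the modulus comparison collapses, and I see no repair within this framework: even the non-strict bound $|\sin(tz_{0})|\leq|\sin z_{0}|$ fails.

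The failure is instructive when set against the paper's actual proof, which is considerably more elaborate precisely because no one-line modulus comparison is available. There the argument first multiplies $\phi$ by a high power $t^{N}$ (using Lemma 2.15 to show that introducing the factor $t$ preserves the existence of a nonreal zero) so as to reduce to the case where both $\phi_{N}$ and $\phi_{N}'$ are increasing; it then shows via careful sign analysis of integrals $\int_{0}^{a}g(t)\cos t\,dt$ and $\int_{0}^{a}g(t)\sin t\,dt$ (Lemmas 2.13--2.14), combined with a curve-tracing argument in the upper half-plane and Rolle's theorem, that $\operatorname{Re}G_{N}$ and $\operatorname{Im}G_{N}$ cannot vanish simultaneously off the real axis. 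Your instinct to separate real and imaginary parts and exploit monotonicity of $\phi$ is in the right spirit, but the control must be extracted period by period from the oscillation of $\cos t$ and $\sin t$ against an increasing weight, not from a global monotonicity of $|\sin(tz_{0})|$ that does not hold.
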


Proof of the above requires several lemmas.

\begin{lem}
{\rm{\textbf{(I)}}} Let $g$ be a positive $C^{1}$ integrable
function on $[0,a)$ such that both $g$ and $g'$ are strictly
increasing on $[0,a)$. Then $$I=\int_{0}^{a}g(t)cos~t~dt$$ is non
zero if $a=2n\pi+\theta$ or $2n\pi+\pi+\theta$,
$0\leq\theta\leq\frac{\pi}{2}$.

\indent {\rm{\textbf{(II)}}} Let $g$ be as above with $g(0)=0$.
Then $$J=\int_{0}^{a}g(t)sin~t~dt$$ is non zero if
$a=2n\pi+\frac{\pi}{2}+\theta$ or $2n\pi+\frac{3\pi}{2}+\theta$,
$0\leq\theta\leq\frac{\pi}{2}$.
\end{lem}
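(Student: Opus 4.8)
Throughout write $F(a)=\int_0^a g(t)\cos t\,dt$ and, in part (II), $J(a)=\int_0^a g(t)\sin t\,dt$; these are well defined since $g\in L^1([0,a))$. My plan is \emph{not} to estimate these integrals by playing one bump of $\cos$ (or $\sin$) against the next, since that always leaves a remainder of indefinite sign over the last, incomplete, bump. Instead I would use that $F'=g\cos$ and $J'=g\sin$, so — because $g>0$ — $F$ is monotone on each interval $[2k\pi-\tfrac\pi2,\,2k\pi+\tfrac\pi2]$ and $[2k\pi+\tfrac\pi2,\,2k\pi+\tfrac{3\pi}2]$, while $J$ is monotone on each $[2k\pi,(2k+1)\pi]$ and $[(2k+1)\pi,(2k+2)\pi]$. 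Each of the four ranges in the statement lies inside one such interval of monotonicity whose relevant endpoint is one of the special points $2n\pi$, $(2n+1)\pi$, $2n\pi+\tfrac\pi2$, $2n\pi+\tfrac{3\pi}2$; hence it suffices to determine the sign of $F$ (resp. $J$) at that one endpoint.

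At each such endpoint $b$ I would integrate by parts. When $b\in\{2n\pi,(2n+1)\pi\}$ (the points used for $F$) the boundary term $g(b)\sin b$ vanishes because $\sin b=0$, and $g(0)\sin 0=0$ as well, so $F(b)=-\int_0^b g'(t)\sin t\,dt$. When $b\in\{2n\pi+\tfrac\pi2,\,2n\pi+\tfrac{3\pi}2\}$ (the points used for $J$) the boundary term $g(b)\cos b$ vanishes because $\cos b=0$, and the remaining boundary term is $g(0)\cos 0=0$ precisely by the extra hypothesis $g(0)=0$, so $J(b)=\int_0^b g'(t)\cos t\,dt$. Now $g'$ is continuous, has $g'(0)\ge0$ (as $g$ is increasing), and is strictly increasing, so it carries a non-atomic Lebesgue–Stieltjes measure $\nu$ of positive total mass with $g'(t)=g'(0)+\nu((0,t])$, and Fubini gives
\[
\int_0^b g'(t)\sin t\,dt=g'(0)(1-\cos b)+\int_{(0,b]}(\cos c-\cos b)\,\nu(dc),\qquad
\int_0^b g'(t)\cos t\,dt=g'(0)\sin b+\int_{(0,b]}(\sin b-\sin c)\,\nu(dc).
\]

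It then remains only to read off the signs. For (I): at $b=2n\pi$ one gets $F(b)=\int_{(0,2n\pi]}(1-\cos c)\,\nu(dc)>0$ when $n\ge1$ (and $F(a)>0$ directly for $0<a\le\tfrac\pi2$ when $n=0$), since $1-\cos c\ge0$ with zeros only on the $\nu$-null set $2\pi\mathbb Z$; at $b=(2n+1)\pi$ one gets $F(b)=-\bigl(2g'(0)+\int_{(0,(2n+1)\pi]}(1+\cos c)\,\nu(dc)\bigr)<0$, since $1+\cos c\ge0$ with zeros only on $\pi+2\pi\mathbb Z$. For (II): at $b=2n\pi+\tfrac\pi2$, using $\sin b=1$, $J(b)=g'(0)+\int_{(0,b]}(1-\sin c)\,\nu(dc)>0$; at $b=2n\pi+\tfrac{3\pi}2$, using $\sin b=-1$, $J(b)=-g'(0)-\int_{(0,b]}(1+\sin c)\,\nu(dc)<0$. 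Each strict inequality holds because $\nu$ is non-atomic with positive mass while the relevant kernel vanishes only on a finite set. Combined with the monotonicity noted above, this yields $F(a)\ne0$ on the two ranges of (I) and $J(a)\ne0$ on the two ranges of (II).

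I expect the only genuine obstacle to be this initial change of viewpoint — localising through the antiderivative rather than fighting the oscillation head‑on — after which the integration by parts and the Stieltjes representation of $g'$ make the signs transparent; the hypothesis that $g'$ (not merely $g$) is increasing is exactly what feeds into the sign of $\nu$. A minor technical point is that $g$, and hence $g'$, need not be bounded near $a$, so when the relevant endpoint $b$ coincides with $a$ the integration by parts and the Fubini step should be justified by truncating to $[0,a-\varepsilon]$ and letting $\varepsilon\to0$; this is legitimate because $g\in L^1([0,a))$ forces $g(a-\varepsilon)\,\sin(a-\varepsilon)$ and $g(a-\varepsilon)\,\cos(a-\varepsilon)$ to tend to $0$.
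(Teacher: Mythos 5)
Your argument is correct, and it reaches the conclusion by a genuinely different route from the paper's. The paper attacks the oscillation directly: it splits $[0,a]$ into full periods plus a remainder of favourable sign, and shows each full period contributes a definite sign by folding the four quarter-periods onto $[0,\tfrac{\pi}{2}]$, writing $\int_0^{2\pi}g(2k\pi+t)\cos t\,dt=\int_0^{\pi/2}G_k(t)\cos t\,dt$ with $G_0(t)=g(2\pi-t)-g(\pi+t)-g(\pi-t)+g(t)$ and checking $G_0(\tfrac{\pi}{2})=0$, $G_0'<0$ from the monotonicity of $g'$; this folding has to be redone, with different combinations, for the sine case and for each residual range. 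You instead use $F'=g\cos$, $J'=g\sin$ to localise the problem to a single quarter-period endpoint, then integrate by parts and represent $g'$ by its positive non-atomic Lebesgue--Stieltjes measure, so that one Fubini identity settles all four cases and makes visible where each hypothesis enters ($g>0$ for the monotone reduction, $g'$ increasing for the positivity of $\nu$, $g(0)=0$ for the boundary term in (II)). Your version is more uniform and, to my mind, more conceptual; the paper's is more elementary, using nothing beyond one-variable calculus. Two details to tighten. In the boundary case $\theta=0$ the point $b$ coincides with $a$, where $g$ and $g'$ may be unbounded; your truncation is the right fix, but note that only the boundary term whose trigonometric factor vanishes at $a$ goes to zero, e.g. $g(a-\varepsilon)\sin(a-\varepsilon)=O\bigl(\varepsilon\, g(a-\varepsilon)\bigr)\to 0$ when $\sin a=0$, using that integrability plus monotonicity give $\varepsilon\, g(a-\varepsilon)\to 0$ (the other product need not vanish, but it is not the one you use); passing to the limit in the $\nu$-integral when $\nu$ has infinite mass near $a$ also needs a word, handled by splitting off a neighbourhood of $a$ on which the kernel is nonnegative. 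Finally, for $n=0$ in (I) the statement implicitly requires $a>0$, as you observe; the paper's proof carries the same convention.
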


\begin{proof}
\textbf{(I)} Case-1 : Let $a=2n\pi+\theta$,
$0\leq\theta\leq\frac{\pi}{2}$. Then \beas
I\geq\int_{0}^{2n\pi}g(t)cos~t~dt&=&\sum_{k=0}^{n-1}I_{k}\eeas
where \beas
I_{k}=\int_{2k\pi}^{2k\pi+2\pi}g(t)cos~t~dt&=&\int_{0}^{2\pi}g(2k\pi+t)cos~t~dt.\eeas
First, consider $I_{0}$.
$$I_{0}=\int_{0}^{\frac{\pi}{2}}G_{0}(t)cos~t~dt$$ where
$$G_{0}(t)=g(2\pi-t)-g(\pi+t)-g(\pi-t)+g(t).$$ Now,
$G_{0}(\frac{\pi}{2})=0$ and
$$G_{0}'(t)=-g'(2\pi-t)-g'(\pi+t)+g'(\pi-t)+g'(t)$$ is negative
by the assumption on $g$. It follows that $G_{0}(t)>0$ for $t\in
[0,\frac{\pi}{2})$. Hence $I_{0}>0$. Notice that each $I_k$
 is given by an integral $\int_0^{2\pi} G_k(t)~dt$ where $G_k$ is just $G_0$ translated
 by a multiple of $\pi.$ Hence each $I_{k}>0$ which implies that $I$ is non zero.

Case-2: Let $a=2n\pi+\pi+\theta$, $0\leq\theta\leq\frac{\pi}{2}$.
Then \beas -I\geq - \int_{0}^{2n\pi+\pi}g(t)cos~t~dt&=&
\bar{I}+\sum_{k=0}^{n-1}\bar{I_{k}}\eeas where $\bar{I}=-
\int_{0}^{\pi}g(t)cos~t~dt$ and \beas
\bar{I_{k}}=-\int_{(2k+1)\pi}^{(2k+1)\pi+2\pi}g(t)cos~t~dt&=&\int_{0}^{2\pi}g((2k+1)\pi+t)cos~t~dt.\eeas
Now $\bar{I}=
\int_{0}^{\frac{\pi}{2}}\left[g(\pi-t)-g(t)\right]cos~t~dt>0.$
Also as in the previous case $\bar{I_{k}}>0$. Therefore $I$ is
non zero.

\textbf{(II)} Case-1: Let $a=2n\pi+\frac{\pi}{2}\theta$,
$0\leq\theta\leq\frac{\pi}{2}$.  Then \beas
J\geq\int_{0}^{2n\pi+\frac{\pi}{2}}g(t)sin~t~dt&=&\sum_{k=0}^{n-1}J_{k}\eeas
where \beas
J_{k}=\int_{2k\pi\frac{\pi}{2}}^{2k\pi+\frac{\pi}{2}+2\pi}g(t)sin~t~dt&=&\int_{\frac{\pi}{2}}^{\frac{\pi}{2}+2\pi}g(2k\pi+t)sin~t~dt.\eeas
First consider $J_{0}$.
$$J_{0}=\int_{0}^{\frac{\pi}{2}}E_{0}(t)sin~t~dt$$ where
$$E_{0}(t)=g(2\pi+t)-g(2\pi-t)-g(\pi+t)+g(\pi-t).$$ Now,
$E_{0}(0)=0$ and
$$E_{0}'(t)=g'(2\pi+t)+g'(2\pi-t)-g'(\pi+t)-g'(\pi-t)$$ is
positive by assumption on $g$. It follows that $E_{0}(t)>0$ for
$t\in (0,\frac{\pi}{2}]$. Hence $J_{0}>0$. Similarly each
$J_{k}>0$ which implies that $J$ is non zero.

Case-2: Let $a=2n\pi+\frac{3\pi}{2}+\theta$,
$0\leq\theta\leq\frac{\pi}{2}$.Then \beas -J\geq -
\int_{0}^{2n\pi+\frac{3\pi}{2}}g(t)sin~t~dt&=&
\bar{J}+\sum_{k=0}^{n-1}\bar{J_{k}}\eeas where $\bar{J}=-
\int_{0}^{\frac{3\pi}{2}}g(t)sin~t~dt$ and \beas
\bar{J_{k}}=-\int_{2k\pi+\frac{3\pi}{2}}^{2k\pi+\frac{3\pi}{2}+2\pi}g(t)sin~t~dt&=&\int_{\frac{\pi}{2}}^{\frac{\pi}{2}+2\pi}g((2k+1)\pi+t)sin~t~dt.\eeas
$\bar{J} = \int_{0}^{\frac{\pi}{2}}E(t)sin~t~dt$ where
$$E(t)=g(\pi+t)-g(\pi-t)-g(t).$$ Now $E(0)=0$ and
$$E'(t)=g'(\pi+t)+g'(\pi-t)-g'(t)$$ is positive by assumptions on
$g$. It follows that $E(t)>0$ for $t\in (0,\frac{\pi}{2}].$ Hence
$\bar{J}>0$. Also as in the previous case $\bar{J_{k}}>0$.
Therefore $J$ is non zero.
\end{proof}

\begin{lem}
{\rm{\textbf{(I)}}} Let $g$ be a non negative  continious
integrable strictly increasing function on $[0,a)$. Then,
$$I:=\int_{0}^{a}g(t)cos t$$ is non zero if
$a=\frac{\pi}{2}+k\pi$ for some non negative integer $k$.

\indent {\rm{\textbf{(II)}}} Let $g$ be as above. Then,
$$J:=\int_{0}^{a}g(t)sin t$$ is non zero if $a=k\pi$ for some
positive integer $k$.
\end{lem}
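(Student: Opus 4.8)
The plan is to reduce the two claims to the already-proved Lemma 2.13 by a limiting argument, since Lemma 2.13 covers exactly the case where $g$ and $g'$ are strictly increasing, and here we only assume $g$ is non-negative, continuous, integrable and strictly increasing (no differentiability, no monotonicity of a derivative, and $g(0)$ may be positive or zero).

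\medskip

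For part \textbf{(II)}, suppose $a = k\pi$ and assume for contradiction that $J = \int_0^a g(t)\sin t\,dt = 0$. First I would regularize $g$ by convolution with a smooth, even, compactly supported approximate identity to produce $C^1$ functions $g_\varepsilon$ that are still non-negative and strictly increasing on a slightly shrunken interval; however, one does not automatically get $g_\varepsilon'$ strictly increasing, so a cleaner route is to approximate $g$ from below by step functions. Indeed, since $g$ is continuous and strictly increasing, for any partition $0 = t_0 < t_1 < \cdots < t_m = a$ the function $g_{\mathrm{step}} = \sum_j g(t_{j-1})\chi_{[t_{j-1},t_j)}$ satisfies $0 \le g_{\mathrm{step}} \le g$ with $g - g_{\mathrm{step}} \to 0$ uniformly as the mesh goes to zero. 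Then $\int_0^a g(t)\sin t\,dt = \lim \int_0^a g_{\mathrm{step}}(t)\sin t\,dt$, and each step integral is $\sum_j g(t_{j-1})\int_{t_{j-1}}^{t_j}\sin t\,dt = \sum_j g(t_{j-1})(\cos t_{j-1} - \cos t_j)$; summing by parts this equals $\sum_j (g(t_j) - g(t_{j-1}))\cos t_j + (\text{boundary})$, which exhibits $J$ as an Abel/Abel-summation expression in the increments of $g$ against $\cos$. The strict monotonicity of $g$ makes each increment positive, and on the interval $[0,k\pi]$ the appropriate grouping of the $\cos t_j$ terms over consecutive quarter- or half-periods yields a strictly one-signed sum, exactly mirroring the $I_k, \bar I_k$ decomposition in the proof of Lemma 2.13. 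The key point is that $J$ being zero forces $g$ to be (a.e., hence everywhere by continuity) constant on $[0,a]$, contradicting strict monotonicity.

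\medskip

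For part \textbf{(I)} the argument is the same, using $a = \pi/2 + k\pi$ and integrating $g_{\mathrm{step}}$ against $\cos t$; after summation by parts one obtains $\sum_j (g(t_j)-g(t_{j-1}))\sin(\cdot) + (\text{boundary at }a=\pi/2+k\pi)$, and the endpoint $a = \pi/2 + k\pi$ is precisely the one that makes the boundary term cooperate in sign with the positive increments of $g$. In both parts the role of the hypothesis $g(0) = 0$ in Lemma 2.13(II) is here absorbed: for the sine case we do \emph{not} need $g(0)=0$ because the extra term coming from $g(0)$ integrates against $\sin t$ over $[0,k\pi]$ to give $g(0)(1 - \cos k\pi)$, which is $0$ when $k$ is even and $2g(0) > 0$ when $k$ is odd, never spoiling the sign; I would record this case split explicitly.

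\medskip

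The main obstacle I anticipate is organizing the sign bookkeeping of the summation-by-parts expression uniformly over all admissible $a$: one must group the terms $\cos t_j$ (resp.\ $\sin t_j$) over half-periods so that, against the positive increments $g(t_j)-g(t_{j-1})$, the partial sums are monotone and of a fixed sign, and then check that the leftover ``tail'' integral over the final incomplete half-period $[k\pi, a]$ (of length at most $\pi/2$ by the hypothesis on $a$) has the same sign rather than the opposite one. This is exactly the discrete analogue of the $G_0, E_0, \bar I, \bar J$ computations already done in Lemma 2.13, so I would phrase part \textbf{(I)} and part \textbf{(II)} as corollaries of that lemma applied to the piecewise-constant approximants together with a Riemann-sum passage to the limit, keeping the explicit computation only for the degenerate pieces at the two ends of the interval.
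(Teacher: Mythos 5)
Your Abel--summation skeleton is sound, but the step that actually produces the sign is wrong as stated, and the announced reduction to the preceding lemma (the one assuming $g$ and $g'$ both strictly increasing and $C^{1}$) cannot be carried out. After summation by parts you must control $\sum_j \bigl(g(t_j)-g(t_{j-1})\bigr)\cos t_j$, i.e.\ in the limit the Stieltjes integral $\int_0^{a}\cos t\,dg(t)$. This quantity is genuinely not one-signed over the class of admissible $g$: if the mass of $dg$ is concentrated near $t=\pi$ it is close to $-(g(a^-)-g(0))$, while if it is concentrated near $t=2\pi$ it is close to $+(g(a^-)-g(0))$. Grouping the terms over half-periods produces blocks of alternating sign, and to compare the sizes of consecutive blocks you would need to compare $dg$ on $[j\pi,(j+1)\pi]$ with $dg$ on $[(j+1)\pi,(j+2)\pi]$ --- that is, you would need monotonicity of $g'$, which is exactly the hypothesis this lemma is supposed to drop. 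For the same reason the piecewise-constant approximants do not satisfy the hypotheses of the preceding lemma, so ``mirroring the $I_k,\bar I_k$ decomposition'' of that proof is not available here.

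The fix, staying within your framework, is to observe that the sign comes from the boundary term of the integration by parts, not from the Stieltjes integral. For \textbf{(I)}, with $a=\frac{\pi}{2}+k\pi$, one has $I=(-1)^{k}g(a^-)-\int_0^{a}\sin t\,dg(t)$ (integrate by parts on $[0,a-\epsilon]$ and let $\epsilon\to 0$ if $g$ is unbounded near $a$), and the trivial estimate $\bigl|\int_0^{a}\sin t\,dg\bigr|\le g(a^-)-g(0)$ yields $(-1)^{k}I\ge g(0)\ge 0$, with equality only if $dg$ is carried by the finite set $\{t:|\sin t|=1\}$; since $g$ is continuous and strictly increasing, $dg$ is nonatomic with full support, so the inequality is strict --- this is the ``$I=0$ forces $g$ constant'' point you correctly anticipated. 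Part \textbf{(II)} is identical via $J=g(0)-(-1)^{k}g(a^-)+\int_0^{a}\cos t\,dg(t)$. Note also that the paper's own proof takes a different and more elementary route: it splits the original integral at the points $\frac{\pi}{2}+j\pi$ (resp.\ $j\pi$), pairs consecutive blocks of length $\pi$, and uses the substitution $t\mapsto t+\pi$ to write each pair as $\int_0^{\pi}\bigl[g(c+\pi+t)-g(c+t)\bigr]\sin t\,dt$ for a suitable $c$, which has a fixed sign because $g$ is strictly increasing; no integration by parts, no approximation, and no evaluation of $g$ at the endpoint is needed.
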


\begin{proof}
Let $a=\frac{\pi}{2}+k\pi$ for some non negative integer $k$.
Then,
$$I=\int_{0}^{\frac{\pi}{2}}g(t)cost+\sum_{j = 0}^{(k-1)}I_{j}$$
where
$$I_{j}=\int_{\frac{\pi}{2}+j\pi}^{\frac{\pi}{2}+(j+1)\pi}g(t)cost~dt.$$
If $k$ is even we can write
$$I=\int_{0}^{\frac{\pi}{2}}g(t)cost+\sum_{j=0}^{\frac{k-2}{2}}(I_{2j}+I_{2j+1}).$$
By a change of variable we get
$$I_{0}+I_{1}=\int_{0}^{\pi}\left[g\left(\pi+\frac{\pi}{2}+t\right)-g\left(\frac{\pi}{2}+t\right)\right]sintdt$$
which is positive since $g$ is strictly increasing. Similarly each
$I_{2j}+I_{2j+1}$ is positive. Hence $I$ is positive. If $k$ is
odd then we can write
$$I=\int_{0}^{\frac{3\pi}{2}}g(t)costdt+\sum_{j=1}^{\frac{k-1}{2}}(I_{2j-1}+I_{2j}).$$
Again using a change of variable we get
$$I_{1}+I_{2}=\int_{0}^{\pi}\left[g\left(\frac{\pi}{2}+\pi+t\right)-g\left(\frac{\pi}{2}+2\pi+t\right)\right]sintdt$$
which is negative since $g$ is strictly increasing. Similarly each
$I_{2j-1}+I_{2j}$ is negative. Also \beas
\int_{0}^{\frac{3\pi}{2}}g(t)costdt
&<&\int_{0}^{\frac{\pi}{2}}g(t)costdt+\int_{\pi}^{\pi+\frac{\pi}{2}}g(t)costdt\\&<&\int_{0}^{\frac{\pi}{2}}[g(t)-g(\pi+t)]costdt\eeas
is negative. Therefore $I$ is negative. Hence \textbf{(I)} is
proved. \textbf{(II)} can be proved using similar type of
arguments.
\end{proof}

\begin{lem}
{\rm{\textbf{(I)}}} Let $g$ be a non negative increasing
integrable $C^{2}$function on $[0,1)$ such that for some $M>1$,
$Mg(t)+g''(t)\geq 0$  $\forall t\in [0,1)$. Then, for each fixed
$y>M$ the function
$$F_{y}(x):=\int_{0}^{1}g(t)(e^{yt}+e^{-yt})cos~xtdt$$ can vanish
atmost once  in each of the interval
$[\frac{\pi}{2}+k\pi,\frac{\pi}{2}+(k+1)\pi]$, where $k$ is a non
negative integer.

\indent {\rm{\textbf{(II)}}} Let $g$ be as above. Then, for each
fixed $y>M$ the function
$$G_{y}(x)=\int_{0}^{1}g(t)(e^{yt}+e^{-yt})sin~xtdt$$ can vanish
atmost once  in each of the interval $[k\pi,(k+1)\pi]$, where $k$
is a non negative integer.
\end{lem}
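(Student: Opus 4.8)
The plan is to prove the two assertions together. Write $w(t)=g(t)(e^{yt}+e^{-yt})=2g(t)\cosh(yt)$, so that $F_y(x)=\int_0^1 w(t)\cos(xt)\,dt$ and $G_y(x)=\int_0^1 w(t)\sin(xt)\,dt$. The only use of the hypotheses is to fix the shape of $w$: since $y>M>1$ we have $y^2>M$, hence $g''+y^2g\ge g''+Mg\ge 0$ on $[0,1)$, and a direct computation gives
$$w''(t)=2\cosh(yt)\,(g''(t)+y^2g(t))+4y\,g'(t)\sinh(yt)\ge 0,\qquad t\in[0,1).$$
Together with $w\ge 0$ and $w'\ge 0$ this says that $w$ is non-negative, non-decreasing and convex on $[0,1)$. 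Since replacing $g$ by $g+\varepsilon$ preserves all the hypotheses, by letting $\varepsilon\downarrow 0$ at the end I may also assume $w>0$ on $[0,1)$.

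The main step is a scaling reduction to Lemmas 2.12 and 2.13. For $x>0$ the change of variable $s=xt$ gives
$$F_y(x)=\frac1x\int_0^x w(s/x)\cos s\,ds,$$
and, differentiating under the integral sign in $x$, analogous formulas express $F_y'(x)$ and $F_y''(x)$ (and $G_y,G_y',G_y''$) as $x^{-j}\int_0^x$ of $\cos s$ or $\sin s$ against one of the weights $w(s/x)$, $s\,w(s/x)$, $s^2w(s/x)$. By convexity of $w$ each of these weights is non-negative, $C^2$, strictly increasing with strictly increasing derivative on $[0,x)$, and the last two vanish at $s=0$; so Lemmas 2.12 and 2.13 apply. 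Feeding them in, I obtain: $F_y(\pi/2+k\pi)\neq 0$ and $G_y(k\pi)\neq 0$ for $k\ge 1$; $F_y'$ is zero-free on $\bigcup_m[m\pi+\pi/2,(m+1)\pi]$ and $F_y''$ on $\bigcup_m[m\pi,m\pi+\pi/2]$; $G_y'$ is zero-free on $\bigcup_m[m\pi,m\pi+\pi/2]$ and $G_y''$ on $\bigcup_m[m\pi+\pi/2,(m+1)\pi]$. For each of $F_y$ and $G_y$ these zero-free zones of the first and second derivatives tile the line by half-periods, so $F_y'$ can vanish in $[\pi/2+k\pi,\pi/2+(k+1)\pi]$ only inside $((k+1)\pi,(k+1)\pi+\pi/2)$, where $F_y''\neq 0$; hence $F_y'$ has at most one zero in that interval and it is a sign change, i.e. $F_y$ has at most one critical point there, non-degenerate. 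Similarly $G_y$ has at most one, non-degenerate, critical point in each $[k\pi,(k+1)\pi]$.

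Finally I would fix the count at exactly one by a parity argument. Inspecting the proof of Lemma 2.13 shows that the sign of $\int_0^{\pi/2+k\pi}h(t)\cos t\,dt$ and of $\int_0^{k\pi}h(t)\sin t\,dt$ depends only on the parity of $k$, not on the admissible weight $h$; hence $F_y(\pi/2+k\pi)$ alternates in sign with $k$, and so does $G_y(k\pi)$ for $k\ge 1$. The proof then closes with the elementary remark that a $C^1$ function on a closed interval whose derivative has at most one zero, that zero (if present) being a sign change, and which takes non-zero values of opposite sign at the two endpoints, has exactly one zero: split at the possible critical point into two monotone pieces, each piece has at most one zero, and if both had one the endpoint signs would agree. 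This gives exactly one zero of $F_y$ in every $[\pi/2+k\pi,\pi/2+(k+1)\pi]$ and exactly one zero of $G_y$ in every $[k\pi,(k+1)\pi]$ with $k\ge 1$; for the leftover interval $[0,\pi]$ one simply observes that $\sin(xt)>0$ for $0<x\le\pi$ and $0<t<1$, whence $G_y>0$ on $(0,\pi]$ and the only zero of $G_y$ there is $x=0$.

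I expect the bookkeeping in the second paragraph to be the delicate point: one must check, interval by interval, that exactly one candidate half-period for a zero of the first derivative lies inside the target interval, that the first derivative does not vanish at the shared endpoints, and that the weights $w(s/x)$, $s\,w(s/x)$, $s^2w(s/x)$ genuinely meet the strict-monotonicity hypotheses of Lemmas 2.12 and 2.13, especially near $s=0$. One should also make the reduction from $g$ to $g+\varepsilon$ rigorous at the level of zeros, for instance via Hurwitz's theorem applied to the entire functions $F_y$ and $G_y$.
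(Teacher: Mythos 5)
Your argument rests on the same two pillars as the paper's: the rescaling $F_y(x)=\frac{1}{x}\int_0^x w(s/x)\cos s\,ds$ (and its analogues for the derivatives) and the nonvanishing Lemmas 2.12--2.13 for oscillatory integrals against increasing weights. But you take a longer route. The paper applies Lemma 2.12(I) to $F_y$ itself, so that $F_y$ has \emph{no} zeros on $[(k+1)\pi,(k+1)\pi+\tfrac{\pi}{2}]$; any two zeros in the target interval would therefore both lie in $[k\pi+\tfrac{\pi}{2},(k+1)\pi]$, where Lemma 2.12(II) applied to $F_y'$ forbids the critical point that Rolle's theorem would force. That is the whole proof, and it needs neither endpoint signs nor a regularization. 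You instead go one derivative deeper ($F_y'$, $F_y''$), get ``at most one nondegenerate critical point per interval,'' and then recover the zero count from the signs of $F_y$ at the points $\tfrac{\pi}{2}+k\pi$ via a parity refinement of Lemma 2.13. This is workable --- and yields the stronger conclusion ``exactly one zero,'' which is not asked for --- but it obliges you to extract a sign-alternation statement from the proof of Lemma 2.13, whose part (II) the paper does not even write out, and it is what pushes you into the $\varepsilon$-regularization.

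That last step is the one genuine gap. ``At most one real zero of $F_y^{(\varepsilon)}$ in $I$ for every $\varepsilon>0$'' does not pass to the limit via Hurwitz: Hurwitz controls zeros in complex neighbourhoods, and an even-order real zero of $F_y$ is approximated by zeros of $F_y^{(\varepsilon)}=F_y+\varepsilon C$, $C(x)=2\int_0^1\cosh(yt)\cos(xt)\,dt$, which may leave the real axis (or vanish from it outright where $C>0$), so they never violate your bound on \emph{real} zeros. The repair is easy: drop the regularization and check directly that the unperturbed weights $w(s/x)$, $(s/x)\,w(s/x)$, $(s/x)^2w(s/x)$ meet the monotonicity hypotheses actually used in Lemmas 2.12--2.13 --- they do unless $g$ vanishes identically on an initial segment, a degeneracy the paper also ignores and which is absent in the intended application $g=t^N\phi$ with $\phi>0$ --- or simply run the paper's shorter Rolle argument, which makes the whole endpoint bookkeeping unnecessary.
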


\begin{proof}
To prove \textbf{(I)} first note that we can write $F_{y}(x)$ and
$F'_{y}(x)$ in the following way :
$$F_{y}(x)=\frac{1}{x}\int_{0}^{x}g\left(\frac{t}{x}\right)\left(e^{t\frac{y}{x}}+e^{-t\frac{y}{x}}\right)cos
tdt$$ and
$$F'_{y}(x)=-\frac{1}{x}\int_{0}^{x}\frac{t}{x}g\left(\frac{t}{x}\right)\left(e^{t\frac{y}{x}}+e^{-t\frac{y}{x}}\right)sin
tdt.$$ Now, if possible assume that there exists $y_{0}>M$ and a
non negative integer $k_{0}$ such that the interval
$[\frac{\pi}{2}+k_{0}\pi,\frac{\pi}{2}+(k_{0}+1)\pi]$ contains at
least two zeros of the function $F_{y_{0}}(x)$. Beasause of the
given conditions an easy calculation shows that the functions
$g(\frac{t}{x})(e^{t\frac{y_{0}}{x}}+e^{t\frac{y_{0}}{x}})$ and
$\frac{t}{x}g(\frac{t}{x})(e^{t\frac{y_{0}}{x}}+e^{t\frac{y_{0}}{x}})$
on the interval $[0,x)$ satisfy the conditions of \textbf{(I)} and
\textbf{(II)} of Lemma \textbf{(2.11)} respectively. Hence,
$F_{y_{0}}(x)$ and $F'_{y_{0}}(x)$ can not vanish in the intervals
$[\frac{\pi}{2}+k\pi+\frac{\pi}{2},\frac{\pi}{2}+(k+1)\pi]$ and
$[\frac{\pi}{2}+k\pi,\frac{\pi}{2}+k\pi+\frac{\pi}{2}]$
respectively. Therefore, $F_{y_{0}}(x)$ vanishes at least twice in
the interval
$[\frac{\pi}{2}+k\pi,\frac{\pi}{2}+k\pi+\frac{\pi}{2}]$ which
implies, by  Rolles theorem that $F'_{y_{0}}(x)$ has at least one
zero in the same interval, which is a contradiction. This finishes
the proof of \textbf{(I)}. Using similar type of arguments we can
prove \textbf{(II)} also.
\end{proof}

\begin{lem}
Let $g$ be an even or odd continious integrable function on
$(-1,1)$ such that on $[0, 1]$ it is non negative, increasing and
$C^{2}$. Assume that for some $M>1$, $Mg(t)+g''(t)\geq 0$ $\forall
t\in [0,1)$. Let the entire function
$$H_{1}(z):=\int_{-1}^{1}g(t)e^{-izt}dt$$ has a non real zero.
Then the entire function
$$H_{2}(z):=\int_{-1}^{1}tg(t)e^{-izt}dt$$also has a non real
zero.
\end{lem}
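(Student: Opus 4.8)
The plan is to argue by contradiction, exploiting the relationship between the zeros of $H_1$ and $H_2$ via the parity of $g$. Suppose $H_2$ has only real zeros while $H_1$ has a non-real zero. First I would clarify the structure of these functions. Since $g$ is even or odd, one of $H_1,H_2$ is an even entire function and the other is odd (if $g$ is even then $H_1$ is even and $H_2$ is odd; if $g$ is odd then $H_1$ is odd and $H_2$ is even). In either case, writing $z=x+iy$, the even one is (up to a constant) a cosine-type transform $\int_0^1 g(t)(e^{yt}+e^{-yt})\cos xt\,dt$ type integral along appropriate slices, and the odd one a sine-type transform. Here is where Lemma 2.14 enters: for each fixed $y>M$, the function $x\mapsto F_y(x)$ (the real-axis slice, in the variable $x$, of the even transform built from $g$ or from $tg(t)$) vanishes at most once in each interval $[\tfrac{\pi}{2}+k\pi,\tfrac{\pi}{2}+(k+1)\pi]$, and the sine analogue $G_y(x)$ vanishes at most once in each $[k\pi,(k+1)\pi]$. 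So along any horizontal line $\mathrm{Im}\,z=y$ with $|y|>M$, each of $H_1,H_2$ has a controlled, interlacing-type distribution of zeros.

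The key step is a Hermite–Biehler / interlacing argument. An even (resp. odd) entire function of exponential type with only real zeros has the form $H(z)=P(z^2)$ (resp. $zP(z^2)$) with $P$ having only negative real zeros, and crucially $H$ and $H'$ have interlacing real zeros, equivalently $H'/H$ maps the upper half-plane to the lower half-plane. Now observe the identity relating the two transforms: differentiating under the integral sign,
\[
H_1'(z)=\int_{-1}^1 (-it)g(t)e^{-izt}\,dt=-i\,H_2(z).
\]
Thus $H_2=iH_1'$. So the hypothesis "$H_2$ has only real zeros" says exactly that $H_1'$ has only real zeros, and the conclusion we want — "$H_2$ also has a non-real zero" — is the contrapositive of "$H_1$ has only real zeros $\Rightarrow H_1'$ has only real zeros," which is a standard fact (the derivative of a real entire function in the Laguerre–Pólya class, or more generally a Hermite–Biehler function, again lies in that class). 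The content of the lemma is therefore to show that \emph{if} $H_1$ has a non-real zero, then $H_1$ cannot have the special structure forcing $H_1'$ to have only real zeros; one shows the failure propagates. I would use the slice estimates from Lemma 2.14 to count zeros: if $H_1$ had a non-real zero at $x_0+iy_0$, then by evenness/oddness and the reality of the coefficients, $x_0-iy_0$, $-x_0+iy_0$, $-x_0-iy_0$ are also zeros, and one tracks how these four zeros interact with the at-most-one-per-interval bound for $H_1'=-iH_2$ along $\mathrm{Im}\,z=y_0$ to produce, via Rolle's theorem between the paired zeros, an extra zero of $H_1$ or $H_2$ in a forbidden interval — contradicting Lemma 2.14.

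The main obstacle, I expect, is making the zero-counting rigorous: passing from "$H_1$ has a non-real zero" to a concrete statement about how many zeros lie on a fixed horizontal line and then deriving a contradiction with the per-interval bounds. One must control the large-$|z|$ behavior (exponential type, polynomial growth on $\mathbb R$) to know the zeros are suitably distributed, and handle the case $|y_0|\le M$ separately — presumably by noting that a non-real zero with small imaginary part, together with its conjugates, still forces two zeros of $H_1$ close together on some near-real slice, again contradicting the at-most-one bound after applying Rolle to get a zero of $H_1'=-iH_2$. The bookkeeping about which interval endpoints ($k\pi$ versus $\tfrac{\pi}{2}+k\pi$) line up for $H_1$ versus $H_2$, given the parity swap, is the delicate routine part, but Lemma 2.14 is precisely designed to supply exactly the two complementary statements (cosine-type and sine-type) needed to cover both parities of $g$.
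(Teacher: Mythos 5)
Your proposal assembles the right ingredients --- the identity $H_2(z)=iH_1'(z)$, the per-interval zero bounds of Lemma 2.14, and Rolle's theorem --- but it is missing the mechanism that makes Rolle applicable, and its logical framing is reversed. The implication you need, ``$H_2=iH_1'$ has only real zeros $\Rightarrow$ $H_1$ has only real zeros,'' is the \emph{converse} of the standard Laguerre--P\'olya fact that differentiation preserves reality of zeros, not its contrapositive; the converse is false in general (compare $z^2+1$ with its derivative $2z$), so no off-the-shelf interlacing theorem applies and the entire burden falls on the zero-counting step you leave as a hope. Moreover, ``Rolle's theorem between the paired zeros'' $x_0\pm iy_0$, $-x_0\pm iy_0$ does not make sense as stated: Rolle requires a real-valued function of one real variable, and $H_1$ restricted to a segment joining two complex zeros is complex-valued, so no zero of $H_1'$ is produced this way.

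The paper's proof supplies exactly the missing mechanism. Assuming for contradiction that $H_2$ has only real zeros, the real and imaginary parts of $H_2$ cannot vanish simultaneously off the real axis; by the Cauchy--Riemann equations these are (up to sign) the two components of $\nabla F$, where $F(x,y)=\mathrm{Re}\,H_1(x+iy)$, so the zero set of $F$ in the open upper half-plane is a smooth $1$-dimensional manifold. Lemma 2.14 confines the component $C$ through the non-real zero $(x_0,y_0)$ to a single vertical strip and rules out $C$ being closed or escaping upward, so $\overline{C}$ must reach the real axis. Along a parametrization $\gamma$ of this curve, $H_1\circ\gamma$ is purely imaginary (its real part is $F=0$ there) and vanishes at both endpoints, so Rolle applied to the real-valued function $i(H_1\circ\gamma)$ yields $H_1'(\gamma(s_0))\gamma'(s_0)=0$ with $\gamma'(s_0)\neq 0$, i.e.\ a non-real zero of $H_1'=-iH_2$ --- the contradiction. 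Without this level-curve construction (or a genuine substitute for it), your outline does not close.
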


\begin{proof}
First assume that $g$ is even. Since $g$ is also real valued,
there exists $x_{0}>0$ and $y_{0}>0$ such that $H_{1}$ is zero at
$z_{0}=x_{0}+iy_{0}$. Now, if possible assume that $H_{2}$ has
only real zeros, i.e for any $z=x+iy$, $y\neq 0$,
$$Re~H_{2}(z)=\int_{0}^{1}t\phi(t)(e^{yt}-e^{-yt})cosxt~dt,$$ and
$$Im~H_{2}(z)=-\int_{0}^{1}t\phi(t)(e^{yt}+e^{-yt})sinxt~dt$$ can
not vanish simultaneously. But this implies that, if we define
the smooth function $F:\mathbb{R}^{2}\rightarrow\mathbb{R}$ by
$$F(x,y)= Re~H_{1}(x+iy)=\int_{0}^{1}g(t)(e^{yt}+e^{-yt})cosxtdt$$
then the gradient vector $$\nabla
F(x,y)=\left(-\int_{0}^{1}tg(t)(e^{yt}+e^{-yt})sinxtdt,\int_{0}^{1}tg(t)(e^{yt}-e^{-yt})cosxtdt\right)\neq
0$$ whenever $z=x+iy$ is not real i.e $y\neq 0$. Therefore, the
zero set of $F$ in the open upper half plane defines a smooth
$1$-dimensional manifold.

By \textbf{(I)} of Lemma \textbf{(2.12)}, the connected component
of the zero set through $(x_{0},y_{0})$ in the open upper half
plane (call it $C$) is contained in the region
$R:=\{(x,y):\frac{\pi}{2}+k\pi<x<\frac{\pi}{2}+(k+1)\pi,y>0\}$ for
some non negative integer $k$. Now, it is clear that either $C$
will cross the $x-$axis or it will be entirely above the $x-$axis
in which case the closure $\bar{C}$ will include points on the
$x-$axis (this follows from Lemma \textbf{(2.14)} as $C$ cannot be
a closed curve or go "upwards" like a parabola). Hence we may
parametrize the curve (or a portion of it) by $\gamma : [0, 1]
\rightarrow \bar{R} $ such that $\gamma(0) = (x_0, y_0)$ and
$\gamma(1) = (u_o, 0)$ ( the point at which $\bar{C}$ hits the
$x-$axis). Notice that $\frac{\pi}{2} + k\pi < u_0 < \frac{\pi}{2}
+ (k+1)\pi$ and $\gamma$ is smooth with $\gamma^{'}(s) \neq 0$ for
$s \in (0, 1).$ Now, identifying $\mathbb{R}^{2}$ with
$\mathbb{C}$ consider the function $H_{1}\circ\gamma$. It is easy
to see that, this is purely imaginary valued continious function
on $[0,1]$, smooth on $(0,1)$, which vanishes at $0$ and $1$.
Since $\gamma'$ is non zero on $(0,1)$, applying Rolle's theorem
to the function $i(H_{1}\circ\gamma)$ we get that
$\int_{-1}^{1}t\phi(t)e^{-i\gamma(s_{0})t}dt=0$ for some
$s_{0}\in(0,1)$, which is a contradiction, because $\gamma(s_{0})$
is not real.
This finishes the proof when $g$ is even. When $g$ is odd the
proof is almost similar except the fact that instead of finding a
path (C) on which $H_{1}$ is purely imaginary ($0$ included) we
find a path on which $H_{1}$ is real.
\end{proof}

\begin{proof}[Proof of Theorem {\rm{\textbf{2.11}}}] If possible assume that $G$ has
a non real zero. Now, from the given conditions it is easy to see
that for some large $M>0$ $M\phi(t)+\phi''(t)\geq 0$ and hence for
any positive integer $n$ $M(t^{n}\phi(t))+(t^{n}\phi(t))''\geq 0$,
for all $t\in [0,1]$. By Lemma \textbf{2.15} and using induction
we can say that for each positive integer $n$ the entire function
$$G_{n}(s):=\int_{-1}^{1}\phi_{n}(t)e^{-its}dt$$ has a non real
zero, where $$\phi_{n}(t):=t^{n}\phi(t) ~~\forall
t\in\mathbb{R}.$$ Since
$$\phi_{n}'(t)=nt^{(n-1)}\phi(t)+t^{n}\phi'(t)$$ and
\beas\phi_{n}''(t)&=& n(n-1)t^{(n-2)}\phi(t)+4nt^{n-1}\phi'(t)
+t^{n}\phi''(t)\\&=&
t^{(n-2)}[n(n-1)\phi(t)+t^{2}\phi''(t)]+4nt^{(n-1)}\phi'(t),\eeas
by the given conditions it follows that, for some large positive
integer
 $N$ (we can take $N$ to be even) $\phi'_{N}(t)\geq 0$ and $\phi''_{N}(t)\geq 0$
  for all $t\in[0,1]$, i.e $\phi_{N}$ and $\phi_{N}'$ both are increasing on [0,1].
  Now, since $\phi_{N}$ is even and real valued, we will get a contradiction if we
   can prove that $G_{N}(s)$ has no zero in $\{s\in\mathbb{C}:s=x+iy,~~x>0,y>0\}$.
    Now, \beas G_{N}(s) &=& 2\int_{0}^{1}\phi_{N}(t)\left(e^{-its}+e^{its}\right)dt \\
    &=& \int_{0}^{1}\phi_{N}(t)\left(e^{-itx}e^{ty}+e^{itx}e^{-ty}\right)dt \\
    &=&\frac{2}{x}\int_{0}^{x}\phi_{N}\left(\frac{t}{x}\right)\left(e^{-it}e^{t\frac{y}{x}}+e^{it}e^{-t\frac{y}{x}}\right)dt.\eeas
    Therefore, $$Re~G_{N}(s)=\frac{2}{x}\int_{0}^{x}\phi_{N}\left(\frac{t}{x}\right)\left(e^{t\frac{y}{x}}+e^{-t\frac{y}{x}}\right)cos~t~dt$$
    and $$-Im~G_{N}(s)=\frac{2}{x}\int_{0}^{x}\phi_{N}\left(\frac{t}{x}\right)\left(e^{t\frac{y}{x}}-e^{-t\frac{y}{x}}\right)sin~t~dt.$$
    Since $\phi_{N}$ and $\phi_{N}'$ both are increasing on [0,1], it is easy to see that the functions
    $\phi_{N}\left(\frac{t}{x}\right)\left(e^{t\frac{y}{x}}+e^{-t\frac{y}{x}}\right)$ and
    $\phi_{N}\left(\frac{t}{x}\right)\left(e^{t\frac{y}{x}}-e^{-t\frac{y}{x}}\right)$ on
    the interval $[0,x)$ satisfy the assumptions in Lemma \textbf{2.12}. Therefore,
    both Re $G_{N}(s)$ and Im $G_{N}(s)$ can not be simultaneously zero in the first
     quadrant which finishes the proof.
\end{proof}

\section{Support theorems on non compact symmetric spaces}

In this section we prove support theorems on non compact symmetric
spaces. Let $G$ be a connected, non compact semisimple Lie group
with finite center. Let $K\subseteq G$ be a fixed maximal compact
subgroup and $X=G/K$, the associated Riemannian space of non
compact type. Endow $X$ with the $G$-invariant Riemannian
structure induced from the Killing form. Let $dx$ denote the
Riemannian volume element on $X$. We study convolution equations
of the form $f*T=g$, where $f\in C^{\infty}(X)\cap L^{p}(X)$, $T$
a $K$-biinvariant compactly supported distribution on $X$ and
$g\in C_{c}^{\infty}(X)$. We show that under natural assumptions
on the zero set of the spherical Fourier transform of $T$, $f$
turns out to be compactly supported. (The function $f$ is assumed
to be smooth only to make sure that the convolution $f*T$ is well
defined). Before we state our results we recall necessary details.
For any unexplained notation see \cite{He}.

Let $G=KAN$ be an Iwasawa decomposition of $G$ and $\textbf{a}$
be the Lie Algebra of $A$. Let $\textbf{a}^{*}$ be the real dual
of $\textbf{a}$ and $\textbf{a}_{\mathbb{C}}^{*}$ its
complexification. Then for any $g\in G$, $g=k(g)exp H(g)n(g)$
where $k(g)\in K$, $H(g)\in \textbf{a}$, $n(g)\in N$. Let $M$ be
the centralizer of $A$ in $K$. For a suitable function $f$ on
$X$, the Helgason Fourier transform is defined by
$$\tilde{f}(\lambda,
k)=\int_{G}f(x)e^{(i\lambda-\rho)H(x^{-1}k)}dk,$$ where $\rho$ is
the half sum of positive roots and $\lambda\in \textbf{a}^{*}$.
We note that $\tilde{f}(\lambda,k)=\tilde{f}(\lambda,kM)$ and so
sometimes we will write $\tilde{f}(\lambda,b)$ where $b=kM$.

For each $\lambda\in \textbf{a}_{\mathbb{C}}^{*}$, let
$\phi_{\lambda}$ be the elementary spherical function given by :
$$\phi_{\lambda}(g)=\int_{K}e^{(i\lambda-\rho)H(x^{-1}k)}~dk.$$They
are the matrix elements of the spherical principal
representations $\pi_{\lambda}$ of $G$ defined for $\lambda\in
\textbf{a}_{\mathbb{C}}^{*}$ on $L^{2}(K/M)$ by
$$(\pi_{\lambda}(x)v)(b)=e^{(i\lambda-\rho)H(x^{-1}b)}v(k(x^{-1}b)),$$
where $v\in L^{2}(K/M)$. The representations $\pi_{\lambda}$ is
unitary if and only if $\lambda\in \textbf{a}^{*}$. They are also
irreducible if $\lambda\in \textbf{a}^{*}$. For $f\in L^{1}(X)$,
the group Fourier transform $\pi_{\lambda}(f)$, defined by
$$\pi_{\lambda}(f)=\int_{X}f(x)\pi_{\lambda}(x)dx$$ is a bounded
linear operator on $L^{2}(K/M)$. Its action is given by
$$(\pi_{\lambda}(f)v)(b)=\left(\int_{K/M}v(k)dk\right)\tilde{f}(\lambda,b).$$
We also have the Plancherel formula which says that $f\rightarrow
\tilde{f}(\lambda,b)$ is an isometry from $L^{2}(X)$ onto
$L^{2}(\textbf{a}^{*}\times K/M,~~|c(\lambda)|^{-2}d\lambda)$
where $c(\lambda)$ is the Harish-Chandra $c$-function. In
particular,
$$\int_{X}|f(x)|^{2}dx=|W|^{-1}\int_{\textbf{a}^{*}}\int_{K/M}|\tilde{f}(\lambda,w)|^{2}|c(\lambda)|^{-2}d\lambda
dk.$$ Next we comment on the pointwise existence of the Helgason
Fourier transform. For $1\leq p\leq 2$, define
$S_{p}=\textbf{a}^{*}+iC_{\rho}^{p}$, where $C_{\rho}^{p}$ is the
convex hull of $\{s(\frac{2}{p}-1)\rho:s\in W\}$, $W$ being the
Weyl group. let $S_{p}^{^{0}}$ be the interior of $S_p.$. The
following result from \cite{PSSS} proves the existence of Helgason
Fourier transform pointwise.

\begin{thm}
Let $f\in L^{p}(X)$, $1\leq p\leq 2$. Then $\exists$ a subset
$B(f)\subseteq K$, of full measure such that
$\tilde{f}(\lambda,b)$ exists $\forall~~ b\in B$ and $\lambda\in
S_{p}^{0}$. Moreover, for every $b\in B(f),$ fixed,
$\lambda\rightarrow \tilde{f}(\lambda,b)$ is holomorphic on
$S_{p}^{0}$ and
$||\tilde{f}(\lambda,\cdot)||_{L^{1}(K)}\rightarrow 0$ as
$|\lambda|\rightarrow\infty$ in $S_{p}^{0}$.
\end{thm}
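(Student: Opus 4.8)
The plan is to derive all three assertions — pointwise existence of $\tilde f(\lambda,b)$ for almost every $b\in K$, holomorphy of $\lambda\mapsto\tilde f(\lambda,b)$ on $S_p^0$, and the $L^1(K)$-decay at infinity — from a single estimate on elementary spherical functions; the remaining work is then only Tonelli, dominated convergence, Morera, and density. The point of departure is the identity, for $\lambda=\xi+i\eta$ with $\xi,\eta\in\textbf{a}^*$ and $x\in G$,
\[
\int_K\big|f(x)\,e^{(i\lambda-\rho)H(x^{-1}k)}\big|\,dk=|f(x)|\int_K e^{(-\eta-\rho)H(x^{-1}k)}\,dk=|f(x)|\,\phi_{i\eta}(x)\ge 0 .
\]
Integrating in $x$, applying Tonelli, and then Hölder's inequality on $X$ yields
\[
\int_K\Big(\int_G\big|f(x)\,e^{(i\lambda-\rho)H(x^{-1}k)}\big|\,dx\Big)dk=\int_G|f(x)|\,\phi_{i\eta}(x)\,dx\le\|f\|_{L^p(X)}\,\|\phi_{i\eta}\|_{L^{p'}(X)} .
\]
Thus the theorem follows once one knows that $\phi_{i\eta}\in L^{p'}(X)$ whenever $\eta$ lies in the interior of $C_\rho^p$, with $\|\phi_{i\eta}\|_{L^{p'}(X)}$ bounded uniformly as $\eta$ ranges over a compact subset of that interior (for $p=1$, read this as $\phi_{i\eta}\in L^\infty(X)$, uniformly on compacta of the interior of $C_\rho^1$). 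I expect this spherical-function bound to be the only substantial point; the rest is routine.

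I would prove it from Harish-Chandra's sharp size estimate for spherical functions: writing $g=k_1(\exp H)k_2$ with $H$ in the closed positive Weyl chamber $\textbf{a}^+$, one has $\phi_{i\eta}(g)\le C\,(1+|H|)^{N}\,e^{(\eta^{+}-\rho)(H)}$, where $\eta^{+}$ is the dominant Weyl conjugate of $\eta$, together with the integration formula in polar coordinates $\int_G F\,dx=\int_K\!\int_{\textbf{a}^+}\!\int_K F\big(k_1(\exp H)k_2\big)\,J(H)\,dk_1\,dH\,dk_2$ with $J(H)\le C\,e^{2\rho(H)}$. Raising to the power $p'$ and integrating reduces everything to the convergence of $\int_{\textbf{a}^+}(1+|H|)^{Np'}\,e^{(p'\eta^{+}+(2-p')\rho)(H)}\,dH$, which holds exactly when the linear functional $p'\eta^{+}+(2-p')\rho$ is strictly negative on $\textbf{a}^+\setminus\{0\}$; since $p'$ and $p$ satisfy $1-\tfrac{2}{p'}=\tfrac{2}{p}-1$, this strict negativity is precisely membership of $\eta$ in the interior of the polytope $C_\rho^p=\mathrm{conv}\{s(\tfrac{2}{p}-1)\rho:s\in W\}$ (this matching is the very reason for the factor $\tfrac{2}{p}-1$ in the definition of $C_\rho^p$; compare Herz's criterion), and the polynomial factor then plays no role, the bound being locally uniform in $\eta$. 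For $p=1$ the same input gives boundedness of $\phi_{i\eta}$ on $C_\rho^1=\mathrm{conv}(W\rho)$, the classical Helgason--Johnson theorem. The main obstacle, such as it is, lies in making this Weyl-chamber bookkeeping fit the definition of $C_\rho^p$ correctly.

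Granting the spherical bound, I finish as follows. For existence and holomorphy, fix a polydisc $D$ with $\overline D\subset S_p^0$ and choose a polytope contained in the interior of $C_\rho^p$, with vertices $\eta_1,\dots,\eta_m$, that contains $\mathrm{Im}\,\overline D$; since $|e^{(i\lambda-\rho)H(x^{-1}b)}|=e^{(-\mathrm{Im}\,\lambda-\rho)H(x^{-1}b)}$ we get $\sup_{\lambda\in\overline D}\big|e^{(i\lambda-\rho)H(x^{-1}b)}\big|\le\sum_j e^{(-\eta_j-\rho)H(x^{-1}b)}$, hence $\int_K\sup_{\lambda\in\overline D}\big|f(x)e^{(i\lambda-\rho)H(x^{-1}b)}\big|\,db\le|f(x)|\sum_j\phi_{i\eta_j}(x)$, which is integrable over $G$ by the previous paragraph. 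By Tonelli, for $b$ outside a null set $N_D$ the function $x\mapsto\sup_{\lambda\in\overline D}\big|f(x)e^{(i\lambda-\rho)H(x^{-1}b)}\big|$ lies in $L^1(G)$. Exhausting $S_p^0$ by countably many such polydiscs and setting $B(f)=K\setminus\bigcup_n N_{D_n}$ (full measure), dominated convergence makes $\lambda\mapsto\tilde f(\lambda,b)$ continuous on $S_p^0$ for $b\in B(f)$, while Morera's theorem applied along complex lines (the inner integrand being entire in each coordinate of $\lambda$) makes it holomorphic there. Finally, for the decay I would prove the sharper uniform statement on each sub-tube $T_E=\textbf{a}^*+iE$ with $E$ compact in the interior of $C_\rho^p$, namely $\|\tilde f(\lambda,\cdot)\|_{L^1(K)}\le C_E\|f\|_{L^p(X)}$ for $\lambda\in T_E$ with $C_E=\sup_{\eta\in E}\|\phi_{i\eta}\|_{L^{p'}(X)}<\infty$: given $\varepsilon>0$, pick $h\in C_c^\infty(X)$ with $\|f-h\|_{L^p(X)}<\varepsilon/(2C_E)$; Helgason's Paley--Wiener theorem gives $\sup_{b\in K}|\tilde h(\lambda,b)|\le C_{N}(1+|\lambda|)^{-N}e^{R|\mathrm{Im}\,\lambda|}$ for every $N$ and some $R>0$, so $\|\tilde h(\lambda,\cdot)\|_{L^1(K)}\to 0$ as $|\lambda|\to\infty$ in $T_E$, and the triangle inequality gives $\limsup_{|\lambda|\to\infty,\ \lambda\in T_E}\|\tilde f(\lambda,\cdot)\|_{L^1(K)}\le\varepsilon/2$; letting $\varepsilon\to 0$ completes the argument.
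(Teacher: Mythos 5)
The paper does not actually prove this statement: it is Theorem 3.1, quoted from the reference \cite{PSSS} (Mohanty--Ray--Sarkar--Sitaram), so there is no in-house argument to compare against. Your proof is essentially the standard one, and as far as I can tell it is the route taken in that source: Tonelli and H\"older reduce everything to the membership $\phi_{i\eta}\in L^{p'}(X)$ for $\eta$ in the interior of $C_{\rho}^{p}$, which you correctly derive from the Harish-Chandra estimate, the Jacobian bound $J(H)\le Ce^{2\rho(H)}$, and the classical characterization of $\mathrm{conv}(W\nu)$ for dominant $\nu$ by the inequalities $(\nu-\mu^{+})(H)\ge 0$ on the closed positive chamber; the vertex trick for uniform domination over polydiscs, Morera plus Fubini, and the $C_{c}^{\infty}$-density step for the decay are all sound. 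Two small points deserve attention. First, the endpoint $p=2$ is degenerate: $C_{\rho}^{2}=\{0\}$, so $S_{2}^{0}=\emptyset$ and $\phi_{i\eta}$ is never in $L^{2}$; the paper's Remark 3.2(2) confirms this case is handled separately by Plancherel, so you should exclude it explicitly rather than let the H\"older step silently fail there. Second, your Riemann--Lebesgue statement is established only on sub-tubes $\textbf{a}^{*}+iE$ with $E$ compact inside the interior of $C_{\rho}^{p}$, because $\|\phi_{i\eta}\|_{p'}$ blows up as $\eta$ approaches $\partial C_{\rho}^{p}$ when $1<p<2$; if ``$|\lambda|\to\infty$ in $S_{p}^{0}$'' is read as uniform over the whole open tube, your error term is not controlled near the boundary (for $p=1$ the issue disappears since $\phi_{i\eta}$ is uniformly bounded on the closed hull $C_{\rho}^{1}$, which is Remark 3.2(1)). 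Neither point affects how Theorem 3.1 is used later in the paper, where only existence and holomorphy on $S_{p}^{0}$ enter.
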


\begin{rem}
\textbf{(1)} When $p=1$ we have
$||\tilde{f}(\lambda,\cdot)||_{L^{1}(K)}\leq ||f||_{1}$
$\forall\lambda\in S_{1}$.

\indent \textbf{(2)} When $p=2$, existence of
$\tilde{f}(\lambda,b)$ is provided by the Plancherel theorem.
\end{rem}

We also have the Paley-Wiener theorem for compactly supported
functions and distributions.

\begin{thm}
The Fourier transform is a bijection from $C_{c}^{\infty}(X)$ to
$C^{\infty}$ functions $\psi$ on $\bf{a}_{\mathbb{C}}^{*}\times
K/M$ satisfying
\begin{itemize}
\item[\textbf{(a)}] $\psi(\lambda,b)$ is holomorphic as a function
of $\lambda$.
\item[\textbf{(b)}] There is a constant $R\geq 0$
such that $\forall N>0$ $$Sup_{\lambda\in
\bf{a}_{\mathbb{C}}^{*},~ b\in K/M
}e^{-R|Im\lambda|}(1+|\lambda|)^{N}|\psi(\lambda,b)|<\infty.$$
\item[\textbf{(c)}]For any $\sigma$ in the Weyl group and $g\in G$
$$\int_{K/M}e^{-(i\sigma\lambda+\rho)H(g^{-1}k)}\psi(\sigma\lambda,kM)dk=\int_{K/M}e^{-(i\lambda+\rho)H(g^{-1}k)}\psi(\lambda,kM)dk.$$
\end{itemize}
\end{thm}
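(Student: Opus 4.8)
The plan is to prove the two directions of the asserted bijection separately; this is essentially the Helgason--Gangolli Paley--Wiener theorem, and I would follow the classical scheme.

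For necessity, fix $f\in C_{c}^{\infty}(X)$ with support in the closed geodesic ball $\overline{B_{R}}$ of radius $R$ about $o=eK$. Holomorphy of $\lambda\mapsto\tilde f(\lambda,b)$ and smoothness in $b$ follow by differentiating under the integral sign in $\tilde f(\lambda,k)=\int_{X}f(x)\,e^{(i\lambda-\rho)H(x^{-1}k)}\,dx$, since the integrand is entire in $\lambda$ and the domain of integration is compact. For the growth bound \textbf{(b)} I would start from the elementary estimate $|e^{(i\lambda-\rho)H(x^{-1}k)}|\le e^{(|\mathrm{Im}\,\lambda|+|\rho|)\|H(x^{-1}k)\|}$ together with the geometric fact $\|H(x^{-1}k)\|\le d(o,x)\le R$ on $\mathrm{supp}\,f$, which gives $|\tilde f(\lambda,b)|\le C\,e^{R|\mathrm{Im}\,\lambda|}$, and then upgrade it using the intertwining relation $\widetilde{\Delta^{m}f}(\lambda,b)=\big(-(|\lambda|^{2}+|\rho|^{2})\big)^{m}\,\tilde f(\lambda,b)$ for the Laplace--Beltrami operator $\Delta$ (more generally $\widetilde{Df}=\gamma(D)\,\tilde f$ for $G$-invariant $D$, with $\gamma$ the Harish-Chandra homomorphism): applying the previous estimate to $\Delta^{m}f\in C_{c}^{\infty}(X)$, still supported in $\overline{B_{R}}$, yields $(|\lambda|^{2}+|\rho|^{2})^{m}|\tilde f(\lambda,b)|\le C_{m}\,e^{R|\mathrm{Im}\,\lambda|}$ for all $m$, which is \textbf{(b)}. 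The symmetry condition \textbf{(c)} I would deduce from the unitary equivalence $\pi_{\sigma\lambda}\cong\pi_{\lambda}$: both sides of \textbf{(c)} express the value at $g$ of the same matrix coefficient of $\pi_{\lambda}(f)$.

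For sufficiency, let $\psi$ satisfy \textbf{(a)}--\textbf{(c)} with exponent $R$ and set
\[
f(x)=|W|^{-1}\int_{\textbf{a}^{*}}\int_{K/M}\psi(\lambda,kM)\,e^{-(i\lambda+\rho)H(x^{-1}k)}\,|c(\lambda)|^{-2}\,d\lambda\,dk .
\]
Because $|c(\lambda)|^{-2}$ has at most polynomial growth on $\textbf{a}^{*}$ and, by \textbf{(b)}, $\psi$ decays there faster than any polynomial, the integral converges absolutely and differentiation under the integral sign gives $f\in C^{\infty}(X)$. That $\tilde f=\psi$ on $\textbf{a}^{*}\times K/M$ then follows by computing $\tilde f$ from the definition and collapsing the iterated integral via the Plancherel theorem; condition \textbf{(c)} is precisely what makes $\psi$ compatible with the Weyl-group symmetrization implicit in the inversion formula, so that $\tilde f=\psi$ holds and, by analytic continuation in $\lambda$, extends to all of $\textbf{a}_{\mathbb{C}}^{*}\times K/M$. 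The decisive point is $\mathrm{supp}\,f\subseteq\overline{B_{R}}$: for $x$ with $d(o,x)>R$ I would shift the $\lambda$-contour from $\textbf{a}^{*}$ to $\textbf{a}^{*}+i\eta$, with $\eta$ deep in a Weyl chamber chosen according to the $A^{+}$-component of $x$; Cauchy's theorem applies by the holomorphy \textbf{(a)} of $\psi$, the holomorphy of $\lambda\mapsto e^{-(i\lambda+\rho)H(x^{-1}k)}$, the polynomial growth of $|c(\lambda)|^{-2}$ in the strip, and the exponential decay from \textbf{(b)}, which together kill the contributions at infinity. On the shifted contour $|e^{-(i\lambda+\rho)H(x^{-1}k)}|=e^{(\eta-\rho)(H(x^{-1}k))}$, and the uniform-in-$k$ estimates for $H(x^{-1}k)$ bound this by $e^{-t(d(o,x)-R)}$ up to polynomial factors when $|\eta|=t$; letting $t\to\infty$ forces $f(x)=0$. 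The distribution case follows by regularization: a $K$-biinvariant $T\in\mathcal{E}'(X)$ supported in $\overline{B_{R}}$ satisfies $T*\varphi_{\varepsilon}\in C_{c}^{\infty}(X)$ for a radial approximate identity $\varphi_{\varepsilon}$ supported in $\overline{B_{\varepsilon}}$, with Fourier transform $\widetilde T\,\widetilde{\varphi_{\varepsilon}}$ of exponential type $R+\varepsilon$; the order of $T$ supplies an a priori bound $|\widetilde T(\lambda,b)|\le C(1+|\lambda|)^{N}e^{R|\mathrm{Im}\,\lambda|}$, and passing to the limit $\varepsilon\to 0$ gives the analogous characterization with \textbf{(b)} replaced by a slowly-increasing bound, surjectivity being recovered by dividing a given symbol by the Fourier transform of a fixed smooth bump and quoting the function case.

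The main obstacle is this last support statement. It requires the sharp, uniform-in-$k$ control of $H(x^{-1}k)$ and of the analytic continuation of $|c(\lambda)|^{-2}$ along shifted contours --- the Gangolli-type estimates --- and the contour shift must be organized chamber by chamber, with condition \textbf{(c)} reconciling the pieces; this is where the geometry of $X$, rather than soft complex analysis, genuinely enters.
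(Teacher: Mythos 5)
First, a point of orientation: the paper does not prove this theorem at all. It is quoted as the classical Paley--Wiener theorem for the Helgason Fourier transform, with the reader referred to \cite{He} (and to \cite{EHO} for the distributional version), and is then used as a black box in Section 3. So there is no proof in the paper to compare yours against; what can be judged is whether your sketch would actually deliver the classical result. Your necessity direction is fine and standard: holomorphy and the crude bound $|\tilde f(\lambda,b)|\le Ce^{R|\mathrm{Im}\,\lambda|}$ from $\|H(x^{-1}k)\|\le d(o,x)$, upgraded to rapid decay through the Harish-Chandra homomorphism, with \textbf{(c)} coming from the equivalence $\pi_{\sigma\lambda}\cong\pi_{\lambda}$.

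The genuine gap is in the support statement, which you correctly single out as ``the main obstacle'' but then do not actually overcome. The contour shift $\mathbf{a}^{*}\to\mathbf{a}^{*}+i\eta$ applied directly to the inversion integral cannot work as you describe: on the shifted contour $|e^{-(i\lambda+\rho)H(x^{-1}k)}|=e^{\langle\eta,H(x^{-1}k)\rangle-\langle\rho,H(x^{-1}k)\rangle}$, and as $k$ ranges over $K$ the vector $H(x^{-1}k)$ changes direction --- by Kostant's convexity theorem it sweeps out essentially the convex hull of the Weyl orbit of the $\mathbf{a}^{+}$-component of $x$ --- so for \emph{every} choice of $\eta$ there are $k$'s with $\langle\eta,H(x^{-1}k)\rangle$ positive and of order $|\eta|\,d(o,x)$. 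Against the decay $e^{R|\eta|}$ supplied by \textbf{(b)}, part of the $K/M$-integral then grows like $e^{|\eta|(d(o,x)-R)}$, which is exactly the wrong sign; this already happens in rank one, so ``organizing the shift chamber by chamber'' does not rescue it. The decay you need is only visible after exploiting cancellation in the $b$-integral: one must first reduce to $K$-types and expand the resulting ($\delta$-)spherical functions by the Harish-Chandra series $\varphi_{\lambda}=\sum_{s\in W}c(s\lambda)\Phi_{s\lambda}$, shifting the contour term by term while controlling the poles of $c(s\lambda)^{-1}$ and the $\Gamma$-coefficients, or alternatively pass through the horocycle Radon transform, apply the Euclidean Paley--Wiener theorem in the $\lambda$-variable, and invoke the support theorem for that transform. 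Either route is the real content of the theorem and is absent from your sketch; the distributional remark at the end inherits the same gap one level up.
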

\noindent The above theorem extends to the case of distributions
too. See \cite{EHO}.

We restate these results as in \cite{SS}. Let $v_{j}$,
$j=0,1,2,...$ be an orthonormal basis for $L^{2}(K/M)$ where each
$v_{j}$ transform according to some irreducible unitary
representation of $K$ and $v_{0}$ is the constant function $1$ on
$K/M$. (Note that, for any $\lambda\in \textbf{a}^{*}$
$\pi_{\lambda}(k)v_{0}=v_{0}$ and $v_{0}$ is the essentially
unique vector with this property). Let $\widehat{K_{M}}$ consists
of all unitary irreducible representations of $K$ which have an
$M$ fixed vector. For $\delta\in \widehat{K_{M}}$ let
$\chi_{\delta}$ be its character. If $f\in C^{\infty}(X)$, then
$$f=\sum_{\delta\in \widehat{K_{M}}}\chi_{\delta}* f,$$ where the
convergence is in the $C^{\infty}(X)$ topology. It follows that
$f$ is compactly supported if and only if $\chi_{\delta}* f$ is
compactly supported for all $\delta$. We now state the
Paley-Wiener theorem in the following form:

\begin{thm}
Let $f\in L^{p}(X)$, $1\leq p\leq 2$ and $f=\chi_{\delta}* f$ for
some $\delta\in \widehat{K_{M}}$. Then $\tilde{f}(\lambda, b
)=a_{1}(\lambda)v_{i_{1}}(b)+a_{2}(\lambda)v_{i_{2}}(b)+\cdot\cdot\cdot+a_{n}(\lambda)v_{i_{n}}(b).$

\indent \textbf{(a)} If supp $f\subseteq B_{R}$, then each
$a_{i}(\lambda)$ extends to an entire function on
$\bf{a}_{\mathbb{C}}^{*}$ of exponential type $R$.

\indent \textbf{(b)} conversely if each $a_{i}$ extend to an
entire function of exponential type $R$ then supp $f\subseteq
B_{R}$.
\end{thm}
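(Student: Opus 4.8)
The plan is to prove the stated finite expansion of $\tilde f$ first, and then the two support implications. For the expansion, recall that $\tilde f(\lambda,\cdot)=\pi_{\lambda}(f)v_{0}$ (with $v_{0}\equiv 1$), and that the restriction of $\pi_{\lambda}$ to $K$ is, for every $\lambda$, simply the left regular representation of $K$ on $L^{2}(K/M)$, since the Iwasawa $\textbf{a}$-component of an element of $K$ is trivial. Hence $\pi_{\lambda}(\chi_{\delta})$ is the orthogonal projection $P_{\delta}$ of $L^{2}(K/M)$ onto its $\delta$-isotypic subspace, a fixed finite-dimensional space whose dimension depends only on $\delta$. The hypothesis $f=\chi_{\delta}*f$ then forces $\tilde f(\lambda,\cdot)=P_{\delta}\tilde f(\lambda,\cdot)$, so, choosing the $v_{j}$ adapted to the $K$-type decomposition, $\tilde f(\lambda,b)=\sum_{j}a_{j}(\lambda)v_{j}(b)$ with the sum over the finitely many indices for which $v_{j}$ lies in that subspace; the coefficients $a_{j}(\lambda)=\int_{K/M}\tilde f(\lambda,b)\overline{v_{j}(b)}\,db$ are holomorphic on $S_{p}^{0}$ by Theorem \textbf{3.1} (and defined a.e.\ on $\textbf{a}^{*}$ via Plancherel when $p=2$).

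For (a): if $\operatorname{supp}f\subseteq B_{R}$ then, $B_{R}$ having finite Riemannian volume, Holder's inequality gives $f\in L^{1}(X)$. From $\tilde f(\lambda,k)=\int_{X}f(x)e^{(i\lambda-\rho)H(x^{-1}k)}\,dx$ together with the bound $|H(x^{-1}k)|\le R$ valid for $x\in B_{R}$ and $k\in K$ (the norm of the Iwasawa $\textbf{a}$-projection is dominated by the Riemannian distance to the origin), one gets $|\tilde f(\lambda,k)|\le\|f\|_{1}\,e^{(|\operatorname{Im}\lambda|+|\rho|)R}$, while Morera's theorem and dominated convergence show that $\lambda\mapsto\tilde f(\lambda,k)$ is entire on $\textbf{a}_{\mathbb{C}}^{*}$. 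Integrating against $\overline{v_{j}}$, each $a_{j}$ extends to an entire function of exponential type $R$.

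For (b) I would invoke the distribution version of the Paley-Wiener theorem (Theorem \textbf{3.3}, in the extension of \cite{EHO}): it suffices to check that $\Psi(\lambda,b):=\sum_{j}a_{j}(\lambda)v_{j}(b)$ is holomorphic in $\lambda$, satisfies the Weyl-group identity (c), and obeys a bound $|\Psi(\lambda,b)|\le C(1+|\lambda|)^{M}e^{R|\operatorname{Im}\lambda|}$. Holomorphy is the hypothesis, and (c) is satisfied by the Helgason transform of any $f\in L^{p}(X)$, being merely the equivalence $\pi_{\sigma\lambda}\cong\pi_{\lambda}$ transcribed. Since the sum is finite and the $\|v_{j}\|_{\infty}$ are constants, the bound reduces to polynomial boundedness of each $a_{j}$ on $\textbf{a}^{*}$; granting this, a Phragman-Lindeloff argument exactly as in Lemma \textbf{2.9} upgrades ``exponential type $R$'' to $|a_{j}(\lambda)|\le Ce^{R|\operatorname{Im}\lambda|}$ throughout $\textbf{a}_{\mathbb{C}}^{*}$. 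For $1\le p<2$, boundedness of $a_{j}$ on $\textbf{a}^{*}$ follows at once from Theorem \textbf{3.1} (and Remark \textbf{3.2}(1) when $p=1$), which controls $\|\tilde f(\lambda,\cdot)\|_{L^{1}(K)}$ on $\textbf{a}^{*}$ (which lies in $S_{p}^{0}$ for $p>1$ and in $S_{1}$ for $p=1$). For $p=2$, Plancherel gives $\int_{\textbf{a}^{*}}\sum_{j}|a_{j}(\lambda)|^{2}|c(\lambda)|^{-2}\,d\lambda<\infty$; since $|c(\lambda)|^{-2}$ is bounded below off a compact set and each $a_{j}$ is continuous, $a_{j}\in L^{2}(\textbf{a}^{*})$, and a square-integrable entire function of exponential type is bounded on $\textbf{a}^{*}$ by the $L^{2}$ Paley-Wiener / Plancherel-Polya theorem. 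With the bound in hand the distribution Paley-Wiener theorem identifies $f$ with a compactly supported distribution whose support lies in $B_{R}$; as $f\in L^{p}(X)$, this is the assertion. (Alternatively one regularizes: convolve $f$ with a radial $\psi_{\epsilon}\in C_{c}^{\infty}$ supported in $B_{\epsilon}$, apply the $C_{c}^{\infty}$ form of Theorem \textbf{3.3} to $f*\psi_{\epsilon}$ using the spherical estimate $|\widehat{\psi_{\epsilon}}(\lambda)|\le C_{N}(1+|\lambda|)^{-N}e^{\epsilon|\operatorname{Im}\lambda|}$, and let $\epsilon\to 0$.)

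The main obstacle is exactly the growth control on the $a_{j}$ along $\textbf{a}^{*}$, since it is this step that converts the bare hypothesis ``exponential type $R$'' into something that the converse Paley-Wiener theorem can consume. For $p<2$ this is essentially handed over by Theorem \textbf{3.1}; for $p=2$ it rests on the band-limited-implies-bounded phenomenon, where one must still verify that the Plancherel density $|c(\lambda)|^{-2}$ — polynomially bounded below away from the origin, locally integrable near it — does not obstruct the passage from $L^{2}(\textbf{a}^{*},|c(\lambda)|^{-2}d\lambda)$ to $L^{2}(\textbf{a}^{*})$.
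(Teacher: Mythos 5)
First, a point of reference: the paper does not actually prove this statement. It is quoted from \cite{SS}, and the paper's entire justification for the $L^{p}$ version is Remark \textbf{3.5} (``this clearly extends to $f\in L^{p}(X)$, $1\leq p\leq 2$''). So you are supplying an argument where the paper supplies a citation. Judged on its own terms, your skeleton is the standard and correct route: the isotypic decomposition of $\pi_{\lambda}|_{K}$ (which is indeed the regular representation on $L^{2}(K/M)$ for every $\lambda$) gives the finite expansion; the bound $|H(x^{-1}k)|\leq d(x\cdot o,o)\leq R$ gives (a); and for (b) the distributional Paley--Wiener theorem plus a growth bound on $\textbf{a}^{*}$ plus Phragm\'en--Lindel\"of is the right strategy. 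As written this all goes through for $1\leq p<2$, and for $p=2$ in real rank one.

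The genuine gap is in the $p=2$, higher-rank case. Your claim that $|c(\lambda)|^{-2}$ is ``bounded below off a compact set'' is false once $\dim\textbf{a}^{*}\geq 2$: each factor $|c_{\alpha}(\lambda)|^{-2}$ of the Plancherel density vanishes on the root hyperplane $\langle\lambda,\alpha\rangle=0$, and these hyperplanes are unbounded. Hence $\int_{\textbf{a}^{*}}|a_{j}(\lambda)|^{2}|c(\lambda)|^{-2}d\lambda<\infty$ does not yield $a_{j}\in L^{2}(\textbf{a}^{*})$; what it yields (using, e.g., $|c_{\alpha}(\lambda)|^{-2}\geq c\,\lambda_{\alpha}^{2}$ in case \textbf{(a)} of the paper's list) is only that $p(\lambda)a_{j}(\lambda)\in L^{2}(\textbf{a}^{*})$ with $p(\lambda)=\prod_{\alpha}\lambda_{\alpha}$. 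The argument can still be completed: $p\,a_{j}$ is entire of exponential type $R$ and square-integrable on $\textbf{a}^{*}$, hence satisfies a Paley--Wiener bound by the several-variable Plancherel--P\'olya theorem, and since $a_{j}=(p\,a_{j})/p$ is entire by hypothesis, a division-by-polynomials lemma (exactly the kind of device the paper needs in Theorem \textbf{3.9}(2) and Lemma \textbf{3.10}) restores $|a_{j}(\lambda)|\leq C(1+|\lambda|)^{N}e^{R|\mathrm{Im}\,\lambda|}$, which the distributional Paley--Wiener theorem can consume. It is worth noting that the paper quietly sidesteps this very point: for $p=2$ in rank at least two it abandons Theorem \textbf{3.4} and works with the $\delta$-spherical transform instead, precisely because the vanishing of $|c(\lambda)|^{-2}$ on the walls must be handled by hand. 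A smaller imprecision: the Phragm\'en--Lindel\"of step you borrow from Lemma \textbf{2.9} is one-variable, so in higher rank you should invoke Plancherel--P\'olya directly rather than that lemma.
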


\begin{rem}
In \cite{SS} the above theorem is stated only for $f\in L^{1}(X)$.
But, this clearly extends to $f\in L^{p}(X),$ $1\leq p\leq 2$.
\end{rem}

We also recall that if $f$ is  $K-$biinvariant then the Helgason
Fourier transform is independent of $b$ and it reduces to the
spherical Fourier transform of $f$ defined by $$
\tilde{f}(\lambda) = \int f(x)~\varphi_{\lambda}(x)~dx .$$ If $T$
is a $K-$biinvariant compactly supported distribution then
$\tilde{T} (\lambda)$ can be defined similarly. We finish the
preliminaries with the following proposition.

\begin{prop}
Let $f\in L^{p}(X)\cap C^{\infty}(X)$, $1\leq p\leq 2$ and $T$ be
a compactly supported $K$-biinvariant distribution such that
$f*T$ is compactly supported. Then
$$(f*T)^{\tilde{}}(\lambda,b)=\tilde{f}(\lambda,b)\tilde{T}(\lambda).$$
\end{prop}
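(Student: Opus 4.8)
The plan is to establish the product formula by unwinding the definition of the Helgason transform and applying the Iwasawa cocycle identity $H(g_{1}g_{2})=H\big(g_{1}k(g_{2})\big)+H(g_{2})$, after first reducing to the case of a smooth $T$. For the reduction, fix a $K$-biinvariant $\psi\in C_{c}^{\infty}(X)$ with $\psi\not\equiv 0$; then $\psi*T\in C_{c}^{\infty}(X)$ is again $K$-biinvariant, and since the convolution algebra of $K$-biinvariant distributions is commutative (Gelfand pair) and convolution is associative, $f*(\psi*T)=(f*T)*\psi=g*\psi\in C_{c}^{\infty}(X)$. Granting the proposition for the smooth distribution $\psi*T$, and using the standard spherical convolution theorem $\widetilde{\psi*T}(\lambda)=\tilde{\psi}(\lambda)\tilde{T}(\lambda)$ together with the classical identity $\widetilde{g*\psi}(\lambda,b)=\tilde{g}(\lambda,b)\tilde{\psi}(\lambda)$, one gets $\tilde{g}(\lambda,b)\tilde{\psi}(\lambda)=\tilde{f}(\lambda,b)\tilde{\psi}(\lambda)\tilde{T}(\lambda)$. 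Since $\tilde{\psi}$ is a nonzero entire function on $\mathbf{a}_{\mathbb{C}}^{*}$, its zero set is a proper analytic subset; cancelling $\tilde{\psi}$ off that set and then using that both sides are holomorphic in $\lambda$ on $S_{p}^{0}$ (Theorem 3.1 for $\lambda\mapsto\tilde{f}(\lambda,b)$, the Paley--Wiener theorem, Theorem 3.3 and its distributional analogue, for $\tilde{g}$ and $\tilde{T}$) propagates the identity to all of $S_{p}^{0}$, the natural domain of $\tilde{f}(\cdot,b)$. (When $p=2$, where $S_{2}^{0}=\varnothing$, one works on $\mathbf{a}^{*}$, where $\tilde{f}$ exists almost everywhere by Plancherel, and cancels $\tilde{\psi}$ off its null set.)

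For the smooth case, fix $b=kM$ with $k$ in the full-measure set $B(f)$ of Theorem 3.1 and $\lambda\in S_{p}^{0}$, so that $\tilde{f}(\lambda,b)=\int_{X}f(y)\,e^{(i\lambda-\rho)H(y^{-1}k)}\,dy$ with absolutely convergent integral. The cocycle identity with $g_{1}=z^{-1}$, $g_{2}=y^{-1}k$, combined with the left $K$-invariance of $T$ (average over $K$, using that $k'\mapsto k'\,k(y^{-1}k)$ permutes $K$ and the definition of $\varphi_{\lambda}$), gives, for every $y$,
\[
\int_{X}T(y^{-1}x)\,e^{(i\lambda-\rho)H(x^{-1}k)}\,dx=\int_{X}T(z)\,e^{(i\lambda-\rho)H(z^{-1}y^{-1}k)}\,dz=\tilde{T}(\lambda)\,e^{(i\lambda-\rho)H(y^{-1}k)},
\]
all integrals converging absolutely because $\operatorname{supp}T$ is compact (the first equality is the change of variable $z=y^{-1}x$). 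A second application of the cocycle identity together with the compactness of $\operatorname{supp}T$ shows that $x\mapsto|T(y^{-1}x)\,e^{(i\lambda-\rho)H(x^{-1}k)}|$ has integral dominated, uniformly in $y$, by a constant (depending on $\lambda$) times $|e^{(i\lambda-\rho)H(y^{-1}k)}|$; since $\int_{X}|f(y)|\,|e^{(i\lambda-\rho)H(y^{-1}k)}|\,dy<\infty$ by Theorem 3.1, the double integral below is absolutely convergent and Fubini yields
\[
\tilde{T}(\lambda)\,\tilde{f}(\lambda,b)=\int_{X}f(y)\Big(\int_{X}T(y^{-1}x)\,e^{(i\lambda-\rho)H(x^{-1}k)}\,dx\Big)dy=\int_{X}\Big(\int_{X}f(y)\,T(y^{-1}x)\,dy\Big)e^{(i\lambda-\rho)H(x^{-1}k)}\,dx=\tilde{g}(\lambda,b).
\]
For $p=2$ (still with $T\in C_{c}^{\infty}(X)$) one instead approximates $f$ in $L^{2}(X)$ by $f_{n}\in C_{c}^{\infty}(X)$, applies the identity $\widetilde{f_{n}*T}=\tilde{f}_{n}\,\tilde{T}$ (the case just treated, where all integrals converge trivially), and passes to the limit along a subsequence using the Plancherel isometry, noting $f_{n}*T\to f*T$ in $L^{2}(X)$ since $T\in L^{1}(X)$.

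The conceptual heart of the argument is the one-line cocycle computation; the real work is the integrability bookkeeping — in particular, running the Fubini exchange in the direction ``from $\tilde{T}\tilde{f}$ to $\tilde{g}$'' so that the only convergence needed is the absolute convergence of $\tilde{f}(\lambda,b)$ supplied by Theorem 3.1 — together with the separate treatment of the endpoint $p=2$, where the Helgason transform of $f$ is an $L^{2}$/Plancherel object rather than an honest integral. A more representation-theoretic route would invoke $\pi_{\lambda}(f*T)=\pi_{\lambda}(f)\pi_{\lambda}(T)$ and the fact that $\pi_{\lambda}(T)=\tilde{T}(\lambda)\,\langle\,\cdot\,,v_{0}\rangle v_{0}$ for $K$-biinvariant $T$, reading off the identity from $(\pi_{\lambda}(f)v)(b)=\big(\int v\big)\tilde{f}(\lambda,b)$; but justifying $\pi_{\lambda}(f)$ for $f\in L^{p}$ with $p>1$ runs into the same restriction $\lambda\in S_{p}^{0}$, so little is gained.
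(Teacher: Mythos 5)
Your proof is correct and follows essentially the same route as the paper: convolve with a compactly supported smooth $K$-biinvariant test function, use associativity and commutativity of $K$-biinvariant convolution to reduce to the classical multiplication formulas, and then cancel the test function's transform (the paper does this with $\phi$ exactly as you do with $\psi$, splitting $L^{p}\subseteq L^{1}+L^{2}$ instead of invoking Theorem 3.1 directly). The only difference is that you additionally prove, via the Iwasawa cocycle identity, the identity $(f*\phi)^{\tilde{}}(\lambda,b)=\tilde{f}(\lambda,b)\tilde{\phi}(\lambda)$ that the paper simply quotes as standard, and you are more explicit about the cancellation and the $p=2$ endpoint.
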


\begin{proof}
Since $L^{p}\subseteq L^{1}+L^{2}$, it suffices to prove this for
$L^{1}$ and $L^{2}$. If $\phi\in C_{c}^{\infty}(K
\backslash{G}/K)$ then $T*\phi=\phi*T\in
C_{c}^{\infty}(K\backslash{G}/K)$ and
$$(T*\phi)^{\tilde{}}(\lambda,b)=\tilde{T}(\lambda)\tilde{\phi}(\lambda).$$
Also if $f\in L^{1}$ or $L^{2}$ and $g\in
C_{c}^{\infty}(K\backslash G/K)$ then
$$(f*g)^{\tilde{}}(\lambda,b)=\tilde{f}(\lambda,b)\tilde{g}(\lambda).$$
Now, by assumption $f*T\in C_{c}^{\infty}(X).$ So
$$\left((f*T)*\phi\right)^{\tilde{}}(\lambda,b)=(f*T)^{\tilde{}}(\lambda,b)\tilde{\phi}(\lambda).$$
But $(f*T)*\phi=f*(T*\phi)$ and
$$\left(f*(T*\phi)\right)^{\tilde{}}(\lambda,b)=\tilde{f}(\lambda,b)\tilde{T}(\lambda)\tilde{\phi}(\lambda)$$
which proves the proposition.
\end{proof}

Now we are in a position to state the analogue of Theorem
\textbf{2.1} in the previous section. We first deal with the case
$1\leq p<2.$

\begin{thm}
Let $f\in L^{p}(X)\cap C^{\infty}(X)$, $1\leq p<2$ and $T$ be a
compactly supported $K$-biinvariant distribution. Assume that
$f*T$ is compactly supported. If all irreducible components of
$Z_{\tilde{T}}$ intersects $S_{p}^{0}$, then $f$ is compactly
supported.
\end{thm}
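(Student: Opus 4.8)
The plan is to mirror the structure of the proof of Theorem \textbf{2.1}, replacing the Euclidean Fourier transform by the Helgason Fourier transform and the zero set $Z_{\widehat{T}}\subseteq\mathbb{C}^n$ by $Z_{\tilde{T}}\subseteq\mathbf{a}_{\mathbb{C}}^*$. The key simplification here is that we are in the range $1\leq p<2$, where $S_p^0$ is an \emph{open} set in $\mathbf{a}_{\mathbb{C}}^*$ on which, by Theorem \textbf{3.1}, the map $\lambda\mapsto\tilde{f}(\lambda,b)$ is genuinely holomorphic for a.e.\ fixed $b$. So there is no need for the delicate ``restriction to a manifold'' argument of Lemma \textbf{2.4}; instead divisibility can be read off directly from holomorphy on $S_p^0$.

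First I would reduce, via convolution with radial $C_c^\infty$ approximate identities, to the case where $f$ is smooth and $g=f*T\in C_c^\infty(X)$, and decompose $f=\sum_{\delta}\chi_\delta*f$, so that by the remark after Theorem \textbf{3.5} it suffices to treat a single $K$-type: $f=\chi_\delta*f$, hence $\tilde{f}(\lambda,b)=\sum_{j=1}^n a_j(\lambda)v_{i_j}(b)$ with each $a_j$ holomorphic on $S_p^0$ (Theorem \textbf{3.5}, extended as in Remark \textbf{3.6}), and correspondingly $\tilde{g}(\lambda,b)=\sum_j b_j(\lambda)v_{i_j}(b)$ with each $b_j$ entire of exponential type. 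Proposition \textbf{3.7} gives $b_j(\lambda)=a_j(\lambda)\tilde{T}(\lambda)$ on $\mathbf{a}^*$, hence by analytic continuation on all of $S_p^0$. Now fix $j$. On the open set $S_p^0$ we have the factorization $b_j=a_j\tilde{T}$ with $a_j$ holomorphic there and $b_j$ entire; I claim $b_j/\tilde{T}$ extends to an entire function. Since every irreducible component $V$ of $Z_{\tilde{T}}$ meets $S_p^0$, and $V$ is irreducible, $V\cap S_p^0$ is a nonempty open subset of $V$ whose closure is $V$; on $V\cap S_p^0$ we have $b_j=a_j\tilde{T}$ with $a_j$ holomorphic, so $\mathrm{Ord}_z\,b_j\geq\mathrm{Ord}_z\,\tilde{T}$ at generic (regular) points $z$ of $V\cap S_p^0$. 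Because the order is constant on connected components of $\mathrm{Reg}(Z_{\tilde{T}})$, this forces $V\subseteq Z_{b_j}$ with multiplicity at least that of $V$ in $Z_{\tilde{T}}$. This holds for every irreducible component $V$, so arguing exactly as in Lemma \textbf{2.3} (using that $\mathrm{Sing}\,Z_{\tilde{T}}$ has vanishing $(2n-2)$-Hausdorff measure and the removable-singularity Proposition from \cite{C}), the quotient $c_j:=b_j/\tilde{T}$ is entire.

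Next I would identify $c_j$ with $a_j$. On $\mathbf{a}^*$ (away from the real zeros of $\tilde T$) we have $a_j=b_j/\tilde{T}=c_j$, so $a_j$ — a priori only holomorphic on $S_p^0$ — agrees with the entire function $c_j$ on a set with interior, hence $a_j$ extends to the entire function $c_j$. By Malgrange's theorem $c_j$ is of exponential type. To pin down the exponential type and conclude support in a ball, I would argue as at the end of the proof of Theorem \textbf{2.1}: for $\psi$ with $\tilde\psi$ compactly supported, $\psi f\in L^1(X)$ so its Helgason transform is bounded and (being built from the $a_j\tilde\psi$-type convolutions) of exponential type; the Paley--Wiener theorem (Theorem \textbf{3.3}/\textbf{3.5}) then forces $\psi f$ to be compactly supported. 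Running this over $\psi$ in a partition-of-unity-type family shows $f$ itself is compactly supported, completing the proof.

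The main obstacle I expect is the divisibility step: making rigorous that ``$V$ meets $S_p^0$'' together with ``$b_j=a_j\tilde T$ holomorphically on $S_p^0$'' implies $\tilde T\mid b_j$ \emph{globally} as entire functions, when $a_j$ is only known to be holomorphic on the open set $S_p^0$ rather than entire. The resolution is that irreducibility of $V$ makes $V\cap S_p^0$ Zariski-dense in $V$, so the order inequality propagates from the regular points of $V$ inside $S_p^0$ to all of $V$ by the constancy of the order on components of $\mathrm{Reg}(Z_{\tilde T})$; one must be slightly careful that a regular point of $V\cap S_p^0$ is a regular point of $Z_{\tilde T}$, which is fine since distinct irreducible components are closed and generically disjoint. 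A secondary technical point is justifying that the finitely many scalar coefficient functions $a_j(\lambda)$ inherit holomorphy on $S_p^0$ from Theorem \textbf{3.1} (one extracts them by integrating $\tilde f(\lambda,\cdot)$ against $\overline{v_{i_j}}$ over $K/M$), and that exponential type is preserved under the various reductions — both routine but worth a sentence.
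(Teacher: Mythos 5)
Your argument is essentially the paper's own proof: reduce to a single $K$-type, write $\tilde{g}(\lambda,b)=\sum_j a_j(\lambda)v_{i_j}(b)$ with each $a_j$ entire of exponential type, use the multiplication formula (Proposition 3.6) to get $a_j=\tilde{T}\,b_j$ with the coefficients $b_j$ of $\tilde{f}$ holomorphic on the open set $S_p^0$ (this is where $p<2$ enters, via Theorem 3.1), and then use the hypothesis that every irreducible component of $Z_{\tilde{T}}$ meets $S_p^0$ to conclude that $a_j/\tilde{T}$ is entire of exponential type; the divisibility argument you spell out (vanishing on all of $V$ by irreducibility, propagation of the order inequality by constancy of the order on components of $\mathrm{Reg}(Z_{\tilde{T}})$, removability across $\mathrm{Sing}(Z_{\tilde{T}})$ as in Lemma 2.3) is exactly the content behind the paper's terse ``it immediately follows.'' The one step to discard is your closing manoeuvre with $\psi f$: the Euclidean identity $\widehat{\psi f}=\hat{\psi}*\hat{f}$ has no analogue for the Helgason transform (a pointwise product on $X$ does not transform into a convolution on $\mathbf{a}^{*}\times K/M$), so that paragraph does not transfer — but it is also unnecessary, since once the coefficients of $\tilde{f}$ are entire of exponential type, part (b) of the Paley--Wiener theorem (Theorem 3.4) yields $\mathrm{supp}\,(\chi_{\delta}*f)\subseteq B_R$ directly, and summing over $\delta\in\widehat{K_M}$ finishes the proof.
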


\begin{proof}
Let $f*T=g$, for $g\in C_{c}^{\infty}(X)$. We may assume that
$f=\chi_{\delta}*f$ and so $g=\chi_{\delta}*g$ as $T$ is
$K-$biinvariant. We have
$$\tilde{g}(\lambda,b)=a_{1}(\lambda)v_{i_{1}}+a_{2}(\lambda)v_{i_{2}}+\cdot\cdot\cdot+a_{n}(\lambda)v_{i_{n}},$$
where each $a_{i}(\lambda)$ extends to an entire function on
$\textbf{a}_{\mathbb{C}}^{*}$ of exponential type $R$ (for some
$R>0$), whose restriction to $\textbf{a}^{*}$ is bounded. Next, by
Proposition \textbf{3.6}
$$ (f*T)^{\tilde{}}(\lambda,b)=\tilde{f}(\lambda,b)\tilde{T}(\lambda).$$
It follows that
$$\tilde{f}(\lambda,b)=b_{1}(\lambda)v_{i_{1}}+b_{2}(\lambda)v_{i_{2}}+\cdot\cdot\cdot+b_{n}(\lambda)v_{i_{n}},$$
where $$a_{j}(\lambda)=\tilde{T}(\lambda)b_{j}(\lambda).$$ Now,
$b_{j}(\lambda)$ are holomorphic functions on $S_{p}^{0}$ and all
the irreducible components of $Z_{\tilde{T}}$ intersect
$S_{p}^{0}$. It immediately follows that
$\frac{a_{j}}{\tilde{T}}$ is an entire function of exponential
type. This finishes the proof.
\end{proof}

To prove the $L^{2}$ case we need to recall details about the
$\delta$-spherical transform and analyze the $c$- function in
detail. If $f\in C^{\infty}(X)$ then we have
$$f=\sum_{\delta\in\widehat{K_{M}}}d(\delta)\chi_{\delta}*f,$$
where $\widehat{K_{M}}$ consists of all unitary irreducible
representations of $K$ which have $M$-fixed vector. We also have
$L^{2}(K/M)=\bigoplus_{\delta\in\widehat{K_{M}}}V_{\delta}$, where
$V_{\delta}$ consists of the vectors in $L^{2}(K/M)$ that
transform according to the representation $\delta$ under the
$K$-action. Let $V_{\delta}^{M}=\{v\in
V_{\delta}:\delta(m)v=v~~\forall m\in M \}$. For
$\delta\in\widehat{K_{M}}$ define spherical functions of type
$\delta$ by $$\Phi_{\lambda,\delta}
(x)=\int_{K}e^{-(i\lambda+\rho)(H(x^{-1}k))}\delta(k)dk,~~~\lambda\in
\textbf{a}_{\mathbb{C}}^{*},~~x\in X. $$ Then,
$$\Phi_{\lambda,\delta}(k,x)=\delta(k)\Phi_{\lambda,\delta}(x),$$and
$$\Phi_{\lambda,\delta}(x)\delta(m)=\Phi_{\lambda,\delta}(x)~~~m\in
M.$$ If $f=d(\delta)\chi_{\delta}*f$, define its
$\delta$-spherical Fourier transform by
$$\tilde{f}(\lambda)=d(\delta)\int_{X}f(x)\Phi_{\lambda,\delta}^{*}(x)dx,$$
where * denotes the adjoint. If $\delta$ is the trivial
representation then $f\rightarrow\tilde{f}$ is the spherical
Fourier transform. In general
$\delta(m)\tilde{f}(\lambda)=\tilde{f}(\lambda)$ and so
$\tilde{f}(\lambda)\in Hom(V_{\delta},V_{\delta}^{M})$. If
$\tilde{f}(\lambda,kM)$ is the Helgason Fourier transform of $f$
then we have
$$\tilde{f}(\lambda)=d(\delta)\int_{K}\tilde{f}(\lambda,kM)\delta(k^{-1})dk,~~~\tilde{f}(\lambda,kM)=Trace(\delta(k)\tilde{f}(\lambda)).$$
The $\delta$-spherical Fourier transform is inverted by
$$f(x)=\frac{1}{|w|}Trace\left(\int_{\textbf{a}^{*}}\Phi_{\lambda,\delta}(x)\tilde{f}(\lambda)|c(\lambda)|^{-2}d\lambda\right).$$
For each $\delta\in \widehat{K_{M}}$, we also have the
$Q_{\delta}(\lambda)$ matrices which are $l(\delta)\times
l(\delta)$ matrices whose entries are polynomial factors in
$\lambda$. Here $l(\delta)=dim V_{\delta}^{M}$. The Paley-Wiener
theorem for $\delta$-spherical transform says the following: Let
$H^{\delta}(\textbf{a}^{*})$ stand for all the functions
$F:\textbf{a}_{\mathbb{C}}^{*}\rightarrow
Hom(V_{\delta},V_{\delta}^{M})$ such that
\begin{itemize}
\item[\textbf{(i)}] $F$ is holomorphic and is of exponential type.
\item[\textbf{(ii)}] $Q_{\delta}^{-1}F$ is holomorphic and Weyl
group invariant.
\end{itemize}

\begin{thm}
The $\delta$-spherical transform $f\rightarrow\tilde{f}$ is a
homeomorphism from $\{f\in
C_{c}^{\infty}(X):f=d(\delta)\chi_{\delta}*f\}$ onto
$H^{\delta}(\textbf{a}^{*})$.
\end{thm}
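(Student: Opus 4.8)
The plan is to deduce Theorem \textbf{3.8} from the Helgason Fourier transform Paley--Wiener theorem (Theorems \textbf{3.3} and \textbf{3.4}) via the dictionary, recalled just above the statement, between the $\delta$-spherical transform $\tilde{f}(\lambda)\in \mathrm{Hom}(V_{\delta},V_{\delta}^{M})$ and the Helgason transform $\tilde{f}(\lambda,kM)$, namely
$$\tilde{f}(\lambda,kM)=\mathrm{Trace}\bigl(\delta(k)\tilde{f}(\lambda)\bigr),\qquad \tilde{f}(\lambda)=d(\delta)\int_{K}\tilde{f}(\lambda,kM)\,\delta(k^{-1})\,dk.$$
First I would take $f\in C_{c}^{\infty}(X)$ with $f=d(\delta)\chi_{\delta}*f$ and $\mathrm{supp}\,f\subseteq B_{R}$. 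Since $f$ lives in the single $K$-type $\delta$, Theorem \textbf{3.4} says $\tilde{f}(\lambda,b)$ is a finite combination of the basis vectors $v_{i_{j}}$ spanning the relevant ($M$-fixed) $\delta$-isotypic subspace, with coefficients $a_{j}(\lambda)$ entire of exponential type $R$ and bounded on $\textbf{a}^{*}$. Substituting this into the second formula above shows that $\lambda\mapsto\tilde{f}(\lambda)$ is a holomorphic $\mathrm{Hom}(V_{\delta},V_{\delta}^{M})$-valued function of exponential type $R$; this is condition \textbf{(i)} in the definition of $H^{\delta}(\textbf{a}^{*})$.

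For condition \textbf{(ii)} the key point is that the Weyl-group functional equation \textbf{(c)} of Theorem \textbf{3.3}, satisfied by $\psi(\lambda,kM)=\tilde{f}(\lambda,kM)$, translates under the dictionary above, using $\Phi_{\lambda,\delta}(kx)=\delta(k)\Phi_{\lambda,\delta}(x)$ and $\Phi_{\lambda,\delta}(x)\delta(m)=\Phi_{\lambda,\delta}(x)$, precisely into the assertion that $Q_{\delta}(\lambda)^{-1}\tilde{f}(\lambda)$ is holomorphic across the zero locus of $\det Q_{\delta}$ and invariant under the Weyl group. This is where the fine structure of the $Q_{\delta}$-matrices (the identification of the polynomial factors used in \cite{SS}) enters; I would import the needed identity from there rather than reprove it, after which condition \textbf{(ii)} follows. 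Together with the previous paragraph this shows that $f\mapsto\tilde{f}$ maps the stated domain into $H^{\delta}(\textbf{a}^{*})$.

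For surjectivity I would run the argument in reverse: given $F\in H^{\delta}(\textbf{a}^{*})$, set $\psi(\lambda,kM):=\mathrm{Trace}(\delta(k)F(\lambda))$; condition \textbf{(i)} gives the holomorphy \textbf{(a)} and the exponential-type growth estimate \textbf{(b)} of Theorem \textbf{3.3}, and condition \textbf{(ii)} gives \textbf{(c)}; hence there is $f\in C_{c}^{\infty}(X)$ with $\tilde{f}(\lambda,b)=\psi(\lambda,b)$, and since $\psi(\lambda,\cdot)$ transforms under $K$ according to $\delta$ we get $f=d(\delta)\chi_{\delta}*f$, with $\tilde{f}=F$ by the inversion formula. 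Injectivity is immediate from the $\delta$-spherical inversion/Plancherel formula. For the homeomorphism claim, both the forward map and its inverse are given by integration against smooth kernels depending holomorphically on $\lambda$, hence are continuous for the natural inductive-limit topology on $\{f\in C_{c}^{\infty}(X):f=d(\delta)\chi_{\delta}*f\}$ and the Fr\'echet-type topology on $H^{\delta}(\textbf{a}^{*})$; alternatively one invokes the open mapping theorem for $LF$-spaces once bijectivity and continuity of the forward map are established.

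The main obstacle is the second paragraph: showing that ``Weyl compatibility \textbf{(c)} for $\tilde{f}(\lambda,kM)$'' is equivalent to ``$Q_{\delta}(\lambda)^{-1}\tilde{f}(\lambda)$ holomorphic and $W$-invariant''. This is the technical heart of the $\delta$-spherical Paley--Wiener theorem: it encodes exactly how the apparent poles produced by $Q_{\delta}(\lambda)^{-1}$ are cancelled by the vanishing forced by the $c$-function and the intertwining relations, and it is the step where one must either quote Helgason--Kostant or reproduce their computation with the explicit $Q_{\delta}$-matrices.
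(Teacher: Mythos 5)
The paper does not actually prove Theorem \textbf{3.8}: it is quoted verbatim from Helgason's book (the only ``proof'' given is the citation ``See \cite{He}''), so there is no argument in the paper to compare yours against. Your outline is a faithful sketch of how the result is established in the literature: reduce to the Helgason--Fourier Paley--Wiener theorem via the dictionary $\tilde{f}(\lambda,kM)=\mathrm{Trace}(\delta(k)\tilde{f}(\lambda))$, and observe that the entire content beyond bookkeeping is the equivalence of the Weyl-compatibility condition \textbf{(c)} with the statement that $Q_{\delta}(\lambda)^{-1}\tilde{f}(\lambda)$ is holomorphic and $W$-invariant. You correctly identify that step as the technical heart and defer it to Helgason--Kostant rather than proving it; as written, your proposal is therefore a reduction to the literature rather than a self-contained proof, but that is exactly the status the theorem has in this paper as well. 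Two minor caveats if you wanted to make the sketch airtight: condition \textbf{(i)} in the paper's definition of $H^{\delta}(\textbf{a}^{*})$ is stated only as ``holomorphic and of exponential type,'' whereas to recover estimate \textbf{(b)} of Theorem \textbf{3.3} in the surjectivity direction you need the full Paley--Wiener bound $\sup e^{-R|\mathrm{Im}\,\lambda|}(1+|\lambda|)^{N}\|F(\lambda)\|<\infty$ for all $N$ (this is what Helgason actually requires); and the topological (homeomorphism) claim does need either the explicit continuity estimates or the open mapping theorem for $LF$-spaces, as you indicate.
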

( See \cite{He} ).

We are now in a position to state the $L^{2}$ version of Theorem
\textbf{3.7} Also recall that if $G$ is a real rank one group then
$\textbf{a}$ and $ \textbf{a}^{*}$ may be identified with
$\mathbb{R}$ and $\textbf{a}_{\mathbb{C}}^{*}$ with $\mathbb{C}$.

\begin{thm}
{\rm{\textbf{(1)}}} Let $G$ be a real rank one group and $T$ be a
compactly supported $K$-biinvariant distribution such that all the
zeros of $\tilde{T}(\lambda)$ are real. If $f\in L^{2}\cap
C^{\infty}(G/K)$ and $f*T$ is compactly supported then $f$ is
compactly supported.

\indent {\rm{\textbf{(2)}}} Let $G$ be such that it has only one
congugacy class of Cartan subgroups. Let $T$ be a $K$-biinvariant
compactly supported distribution such that any irreducible
component of $Z_{\tilde{T}}$ intersected with $\bf{a}^{*}$ has
real dimension $(n-1)$. If $f\in L^{2}\cap C^{\infty}(X)$ and
$f*T$ is compactly supported then $f$ is compactly supported.
\end{thm}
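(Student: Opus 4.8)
The plan is to mimic the Euclidean arguments of Section 2, replacing the Fourier transform on $\mathbb R^n$ by the $\delta$-spherical transform, and the several-complex-variables divisibility lemmas (Lemma 2.3) by their analogues in the spherical setting. In both parts, write $f*T=g$ with $g\in C_c^\infty(X)$, reduce to $f=d(\delta)\chi_\delta*f$ for a fixed $\delta\in\widehat{K_M}$ (and hence $g=d(\delta)\chi_\delta*g$, using $K$-biinvariance of $T$), and invoke Proposition 3.6 to get $\tilde g(\lambda)=\tilde f(\lambda)\tilde T(\lambda)$ as an identity in $\mathrm{Hom}(V_\delta,V_\delta^M)$ for $\lambda\in\mathbf a^*$. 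The key point is that since $f\in L^2$, the Plancherel theorem gives $\lambda\mapsto\tilde f(\lambda)$ in $L^2(\mathbf a^*,|c(\lambda)|^{-2}d\lambda)$, so $\tilde f=\tilde g/\tilde T$ is square-integrable against the Plancherel density. One then wants to conclude that the entries of $\tilde g/\tilde T$ extend holomorphically across $Z_{\tilde T}$, so that $\tilde f$ itself satisfies the Paley-Wiener conditions (i) and (ii) of Theorem 3.9, and hence $f\in C_c^\infty(X)$.

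\smallskip

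For part (1), $G$ has real rank one, so $\mathbf a^*_{\mathbb C}\cong\mathbb C$ and $\tilde T$ is (up to the $Q_\delta$-factor and Weyl symmetry) an even entire function of one complex variable whose zeros are all real. The divisibility is then a one-variable statement: at a real zero $\lambda_0$ of $\tilde T$ of multiplicity $m$, the square-integrability $\int |\tilde g/\tilde T|^2\,|c(\lambda)|^{-2}d\lambda<\infty$ near $\lambda_0$ forces, exactly as in the proof of Theorem 2.7, the order of vanishing of (each entry of) $\tilde g$ at $\lambda_0$ to be at least $m$ — one must only check that $|c(\lambda)|^{-2}$ does not vanish at $\lambda_0$ so it cannot help the integral converge; this follows from the explicit Harish-Chandra formula for the $c$-function in rank one ($|c(\lambda)|^{-2}$ is a locally bounded-below function of the real variable away from its poles, and its zeros, if any, are handled by the $Q_\delta$-matrix which is a polynomial that divides out cleanly by Theorem 3.9(ii)). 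Hence $\tilde g/\tilde T$ is entire of exponential type, the Paley-Wiener condition (ii) is inherited because $\tilde T$ is Weyl-invariant and $Q_\delta^{-1}\tilde g$ already is, and Theorem 3.9 gives $f$ compactly supported.

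\smallskip

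For part (2), the hypothesis that $G$ has a single conjugacy class of Cartan subgroups is what makes $|c(\lambda)|^{-2}$ well-behaved: in that case the Plancherel measure has no lower-dimensional pieces and $|c(\lambda)|^{-2}$ extends to a holomorphic, hence locally nonvanishing away from an analytic set of codimension $\geq 1$, function on $\mathbf a^*_{\mathbb C}$ (this is the symmetric-space counterpart of the grad-condition being automatic). The argument then parallels Lemma 2.3 and the proof of Theorem 2.7 in $n=\dim\mathbf a^*$ variables: each irreducible component $V$ of $Z_{\tilde T}$ meets $\mathbf a^*$ in a set of real dimension $n-1$, so $V$ must be an irreducible component of the zero set of each entry of $\tilde g$ with at least the same order (the square-integrability against $|c(\lambda)|^{-2}d\lambda$, together with the non-vanishing of $|c|^{-2}$, gives the order inequality on the regular part exactly as in Theorem 2.7), and then the Hausdorff-measure removable-singularity argument of Lemma 2.3 promotes $\tilde g/\tilde T$ to an entire function of exponential type. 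Weyl-invariance of $Q_\delta^{-1}(\tilde g/\tilde T)$ is inherited from that of $Q_\delta^{-1}\tilde g$ since $\tilde T$ is itself $W$-invariant, so condition (ii) of Theorem 3.9 holds, and we conclude $f\in C_c^\infty(X)$.

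\smallskip

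\textbf{The main obstacle} I expect is not the complex-analytic divisibility — that is a faithful transcription of Lemmas 2.3 and the proof of 2.7 — but rather controlling the Plancherel density $|c(\lambda)|^{-2}$ and the polynomial matrices $Q_\delta(\lambda)$: one must ensure that the zeros and poles of $|c(\lambda)|^{-2}$ and the vanishing of $\det Q_\delta$ never conspire to let $\int|\tilde g/\tilde T|^2|c(\lambda)|^{-2}d\lambda$ converge \emph{without} the corresponding order inequality on $\tilde g$, and this is precisely where the real-rank-one hypothesis in (1) and the single-Cartan-class hypothesis in (2) are used. Verifying this requires the explicit product formula for the $c$-function (Gindikin–Karpelevič) and the structure of $Q_\delta$ from \cite{He}, and is the one genuinely technical ingredient beyond what Section 2 already supplies.
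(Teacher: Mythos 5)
Your overall strategy (Plancherel $L^{2}$ bound on $\tilde g/\tilde T$, order-counting at the zeros of $\tilde T$, removable singularities, then Paley--Wiener) is the paper's, but the one place you wave your hands is exactly where the proof has real content, and your proposed fixes there do not work. In part (1) the Plancherel density $|c(\lambda)|^{-2}=c(\lambda)c(-\lambda)$ \emph{does} vanish on $\mathbf a^{*}$: in rank one $c(\lambda)$ has a simple pole at $\lambda=0$, so $|c(\lambda)|^{-2}\sim\lambda^{2}$ near the origin. If $\tilde T$ vanishes at $0$ to order $2l$, square-integrability of $|a_{j}/\tilde T|^{2}|c(\lambda)|^{-2}$ only forces $a_{j}$ to vanish to order $2l-1$, one short of divisibility. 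Your suggestion that the zeros of $|c|^{-2}$ are ``handled by the $Q_{\delta}$-matrix which divides out cleanly'' is not the mechanism and cannot be: $Q_{\delta}$ is nonvanishing on $\mathbf a^{*}$, so dividing by it changes nothing at $\lambda=0$. What saves the argument in the paper is parity: $h=a_{j}/Q_{\delta}$ and $\tilde T$ are both \emph{even}, so the order of vanishing of $h$ at $0$ is even and the bound $2l-1$ upgrades automatically to $2l$. Without invoking this Weyl-invariance the step fails.

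In part (2) the analogous problem is worse and your premise is false: when $G$ has one conjugacy class of Cartan subgroups, $|c(\lambda)|^{-2}=c\prod_{\alpha}\lambda_{\alpha}^{2}\prod_{k}(\lambda_{\alpha}^{2}+k^{2})$ is a polynomial vanishing to order $2$ on each root hyperplane $\{\lambda_{\alpha}=0\}$, which has real dimension $n-1$ --- the same dimension as $V\cap\mathbf a^{*}$ for the components $V$ of $Z_{\tilde T}$. So ``the non-vanishing of $|c|^{-2}$'' cannot be invoked on or near these hyperplanes, and if a component of $Z_{\tilde T}$ contains such a hyperplane your order inequality is again off there. The paper sidesteps this by a different device: since $|c(\lambda)|^{-2}\geq c'p(\lambda)^{2}$ with $p=\prod_{\alpha}\lambda_{\alpha}$, one gets $p\,g_{ij}/\tilde T\in L^{2}(\mathbf a^{*})$, realizes $p\,g_{ij}$ and $\tilde T$ as Euclidean Fourier transforms of a $C_{c}^{\infty}$ function and a compactly supported distribution on $\mathbb R^{l}$, applies the already-proved Euclidean $L^{2}$ support theorem (Theorem 2.6) to conclude that $p\,g_{ij}/\tilde T$ is entire of exponential type, and then removes the spurious factor $p$ by a separate lemma (Lemma 3.10) which uses Weyl invariance of $Q_{\delta}^{-1}\tilde g/\tilde T$ to force $p\,g_{ij}/\tilde T$ to vanish on $\{p=0\}$. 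You would need both of these extra ingredients --- the reduction that absorbs the zero set of the Plancherel density into the auxiliary polynomial $p$, and the Weyl-invariance argument that divides $p$ back out --- to close the gap you yourself flagged as ``the one genuinely technical ingredient.''
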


\begin{proof}
\textbf{(1)}: In the rank one case it is known that
$\lambda\rightarrow c(\lambda)$ is a meromorphic function on
$\mathbb{C}$ with simple poles, all lying on the imaginary axis.
In particular $\lambda=0$ is a simple pole. It follows that
$|c(\lambda)|^{-2}=c(\lambda)c(-\lambda)$ is a holomorphic
function in a small strip containing the real line and the only
zero of $|c(\lambda)|^{-2}$ in that strip is $\lambda=0$, of
order $2$. As in the previous theorem we assume that
$f=d(\delta)\chi_{\delta}*f$ and so $g=d(\delta)\chi_{\delta}*g$.
Applying the $\delta$-spherical transform to $f*T=g$ we obtain
$$\tilde{T}(\lambda)\tilde{f}(\lambda)=\tilde{g}(\lambda).$$
Since $l(\delta)=dim V_{\delta}^{M}=1,$ both $\tilde{f}(\lambda)$
and $\tilde{g}(\lambda)$ are $1\times d(\delta)$ vectors. So, to
be consistent with previous notation we write
$$\tilde{g}(\lambda)=(a_{1}(\lambda),a_{2}(\lambda),\cdot\cdot\cdot
, a_{d(\delta)}(\lambda)),$$ and
$$\tilde{f}(\lambda)=(b_{1}(\lambda),b_{2}(\lambda),\cdot\cdot\cdot
, b_{d(\delta)}(\lambda)),$$ where
$$b_{j}(\lambda)=\frac{a_{j}(\lambda)}{\tilde{T}(\lambda)}.$$ By
the Paley-Wiener theorem (Theorem \textbf{3.4})
$\lambda\rightarrow a_{j}(\lambda)$ is an entire function of
exponential type and $\frac{a_{j}(\lambda)}{Q_{j}(\lambda)}$ is an
even entire function on $\mathbb{C}$. We also have
\bea\label{eqn11}\int_{\textbf{a}^{*}}\left|\frac{a_{j}(\lambda)}{\tilde{T}(\lambda)}\right|^{2}|c(\lambda)|^{-2}d\lambda<\infty.\eea
Now if $0\neq \lambda_{0}$ is a zero of $\tilde{T}(\lambda)$ of
order $k$, since $|c(\lambda_{0})|^{-2}\neq 0$ it readily follows
from (3.1) that $\lambda_{0}$ is a zero of $a_{j}(\lambda)$ of
order at least $k$. Next, suppose that $\lambda=0$ is a zero
$\tilde{T}(\lambda)$. Since $\tilde{T}(\lambda)$ is even it
follows that $\exists$ a positive integer $l$ such that
$\tilde{T}(\lambda)\thicksim\lambda^{2l}$ in a neghbourhood of
$\lambda=0$. Recall that $Q_{\delta}(\lambda)\neq 0$ on
$\textbf{a}^{*}$ and
$h(\lambda)=\frac{a_{j}(\lambda)}{Q_{\delta}(\lambda)}$ is even,
holomorphic. Now (\ref{eqn11}) implies that
\bea\label{eqn12}\int_{|\lambda|\leq
\varepsilon}\left|\frac{h(\lambda)}{\tilde{T}(\lambda)}\right|^{2}|c(\lambda)|^{-2}d\lambda<\infty.\eea
for some $\varepsilon>0$. Since
$|c(\lambda)|^{-2}\thicksim\lambda^{2}$ near zero (\ref{eqn12})
implies that $h(\lambda)=0$ if $\lambda=0$. Since $h(\lambda)$ is
even $h(\lambda)\thicksim\lambda^{2m}$ in a neighborhood of
$\lambda = 0 .$ Then  (\ref{eqn12}) implies that $m \geq l$ which
inturn implies that  $\frac{a_{j}(\lambda)}{\tilde{T}(\lambda)}$
is entire which is of exponential type by Malgrange's theorem.
This finishes the proof.

\textbf{(2)}: If $G$ has only one congugacy class of Cartan
subgroups then the Plancherel density $|c(\lambda)|^{-2}$ is
given by a polynomial which we describe now. Let $\sum_{0}^{+}$
be the set of positive indivisible roots. If
$\alpha\in\sum_{0}^{+}$ then the multiplicity $m_{\alpha}$ is
even $\forall\alpha$ and $m_{2\alpha}=0$. For
$\alpha\in\sum_{0}^{+}$ define $$\lambda_{\alpha}=\frac{\langle
\lambda,\alpha\rangle}{\langle\alpha,\alpha\rangle},\vspace*{0.1in}~~\lambda\in
\textbf{a}_{\mathbb{C}}^{*}.$$ With the convention that the
product over an empty set is $1$ the explicit expression for
$|c(\lambda)|^{-2}$ is given by
$$|c(\lambda)|^{-2}=c\prod_{\alpha\in\sum_{0}^{+}}\lambda_{\alpha}^{2}\prod_{k=1}^{m_{\alpha}/2-1}(\lambda_{\alpha}^{2}+k^{2}),$$
(see \cite{HP}) where $c$ is a positive constant.

Proceeding as in the previous case we obtain that
$$\tilde{f}(\lambda)=\frac{\tilde{g}(\lambda)}{\tilde{T}(\lambda)}.$$
Notice that both $\tilde{f}(\lambda)$ and $\tilde{g}(\lambda)$
belong to $Hom(V_{\delta},V_{\delta}^{M})$. Write
$\tilde{f}(\lambda)=((\tilde{f_{ij}}(\lambda)))$ and
$\tilde{g}(\lambda)=((\tilde{g_{ij}}(\lambda))).$ By the
Plancherel theorem we have
$$\int_{\textbf{a}^{*}}\left|\frac{g_{ij}(\lambda)}{\tilde{T}(\lambda)}\right|^{2}|c(\lambda)|^{-2}d\lambda<\infty.$$
From the above and the expression for $|c(\lambda)|^{-2}$ we also
have
\bea\label{eqn13}\int_{\textbf{a}^{*}}\left|\frac{p(\lambda)g_{ij}(\lambda)}{\tilde{T}(\lambda)}\right|^{2}d\lambda<\infty.\eea
where $p(\lambda)$ is the polynomial given by
$$p(\lambda)=\Pi_{\alpha\in\sum_{0}^{+}}\lambda_{\alpha}.$$ Let
$dim~~ \textbf{a}^{*}=l$. Since $\lambda\rightarrow
p(\lambda)g_{ij}(\lambda)$ is an entire function of exponential
type with rapid decay on $\textbf{a}^{*}$, we have $G\in
C_{c}^{\infty}(\mathbb{R}^l)$ such that the Euclidean Fourier
transform of $G$, $\hat{G}(\lambda)=p(\lambda)g_{ij}(\lambda).$
Similarly let $S$ be the compactly supported distribution on
$\mathbb{R}^{l}$ such that $\hat{S}(\lambda)=\tilde{T}(\lambda)$.
From (\ref{eqn13}) it follows that there exists $F\in
L^{2}(\mathbb{R}^{l})$ such that $F*_{\mathbb{R}^{l}}S=G$. Since
$\hat{S}(\lambda)=\tilde{T}(\lambda)$ satisfies the conditions in
Theorem \textbf{2.1} we obtain that $F\in
C_{c}^{\infty}(\mathbb{R}^{l})$. It follows that
$\frac{p(\lambda)g_{ij}(\lambda)}{\tilde{T}(\lambda)}$ is an
entire function of exponential type with rapid decay on
$\textbf{a}^{*}$. However we need to show that
$\frac{g_{ij}(\lambda)}{\tilde{T}(\lambda)}$ is entire. This
follows from applying the following lemma to matrix entries of
$\frac{Q_{\delta}(\lambda)^{-1}\tilde{g}(\lambda)}{\tilde{T}(\lambda)}$.
\end{proof}

\begin{lem}
Let $p(\lambda)$ be as above and $\psi(\lambda)$ be a holomorphic
function defined on
$\bf{a}_{\mathbb{C}}^{*}-\{\lambda:p(\lambda)=0\}$ such that
$p(\lambda)\psi(\lambda)$ has an entire extension. If
$\psi(\lambda)$ is Weyl group invariant then $\psi(\lambda)$ is an
entire function.
\end{lem}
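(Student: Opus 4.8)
The plan is to show that the entire extension $h$ of $p\psi$ is divisible by $p$ in the ring $\mathcal{O}(\textbf{a}_{\mathbb{C}}^{*})$ of entire functions; then $h/p$ is entire and agrees with $\psi$ on the dense open set $\textbf{a}_{\mathbb{C}}^{*}\setminus Z_{p}$, where $Z_{p}=\{\lambda:p(\lambda)=0\}$, so $\psi$ extends to an entire function, which is exactly the assertion.

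First I would record the structure of $p$. For $\alpha\in\sum_{0}^{+}$ set $\ell_{\alpha}(\lambda)=\langle\lambda,\alpha\rangle$, so that $p(\lambda)=\Big(\prod_{\alpha\in\sum_{0}^{+}}\langle\alpha,\alpha\rangle\Big)^{-1}\prod_{\alpha\in\sum_{0}^{+}}\ell_{\alpha}(\lambda)$ is, up to a nonzero constant, the product of the linear forms $\ell_{\alpha}$. Since two distinct elements of $\sum_{0}^{+}$ are never proportional (a restricted root and its double cannot both be indivisible), the hyperplanes $H_{\alpha}:=\ker\ell_{\alpha}$ are pairwise distinct, so the $\ell_{\alpha}$ are pairwise coprime irreducible elements of $\mathcal{O}(\textbf{a}_{\mathbb{C}}^{*})$, and $Z_{p}=\bigcup_{\alpha\in\sum_{0}^{+}}H_{\alpha}$. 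Next I would note the reflection behaviour of $p$: for $\alpha\in\sum_{0}^{+}$ the reflection $s_{\alpha}\in W$ permutes $\sum_{0}^{+}\setminus\{\alpha\}$ and sends $\alpha$ to $-\alpha$, and since $s_{\alpha}$ is orthogonal we have $\langle s_{\alpha}\beta,s_{\alpha}\beta\rangle=\langle\beta,\beta\rangle$, hence $(s_{\alpha}\lambda)_{\beta}=\lambda_{s_{\alpha}\beta}$ and therefore $p(s_{\alpha}\lambda)=\lambda_{-\alpha}\prod_{\beta\neq\alpha}\lambda_{s_{\alpha}\beta}=-\lambda_{\alpha}\prod_{\beta\neq\alpha}\lambda_{\beta}=-p(\lambda)$. (More generally $p(\sigma\lambda)=\det(\sigma)\,p(\lambda)$, but only the reflection identity is needed.)

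The key step is then the following. Let $h\in\mathcal{O}(\textbf{a}_{\mathbb{C}}^{*})$ be the entire extension of $p\psi$. On $\textbf{a}_{\mathbb{C}}^{*}\setminus Z_{p}$ we have $h=p\psi$, and since $\psi$ is $W$-invariant while $p$ is $s_{\alpha}$-alternating, $h(s_{\alpha}\lambda)=p(s_{\alpha}\lambda)\psi(s_{\alpha}\lambda)=-p(\lambda)\psi(\lambda)=-h(\lambda)$ there; by the identity theorem $h(s_{\alpha}\lambda)=-h(\lambda)$ for all $\lambda\in\textbf{a}_{\mathbb{C}}^{*}$. In particular, if $\lambda\in H_{\alpha}$ then $s_{\alpha}\lambda=\lambda$, so $h(\lambda)=-h(\lambda)$, i.e. $h$ vanishes identically on $H_{\alpha}$. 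Choosing linear coordinates in which $\ell_{\alpha}$ is a coordinate function, it follows at once that $h/\ell_{\alpha}$ is holomorphic across $H_{\alpha}$, i.e. $\ell_{\alpha}$ divides $h$ in $\mathcal{O}(\textbf{a}_{\mathbb{C}}^{*})$. As the $\ell_{\alpha}$, $\alpha\in\sum_{0}^{+}$, are pairwise coprime, their product — hence $p$ — divides $h$. Thus $h/p$ is entire and coincides with $\psi$ off $Z_{p}$, so $\psi$ extends to an entire function.

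There is no deep obstacle here; the two points needing care are the alternating identity $p(s_{\alpha}\lambda)=-p(\lambda)$, where one must keep track of the normalising factors $\langle\alpha,\alpha\rangle^{-1}$ (this is where orthogonality of $s_{\alpha}$ enters), and the passage from ``$h$ vanishes on each $H_{\alpha}$'' to ``$p$ divides $h$'', which relies on the $H_{\alpha}$ being pairwise distinct so that the $\ell_{\alpha}$ are coprime; once coprimality is in hand the divisibility is the elementary one-coordinate-at-a-time argument.
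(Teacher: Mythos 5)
Your proof is correct and follows essentially the same route as the paper: both arguments exploit that $p$ is alternating under each reflection $s_{\alpha}$ while $\psi$ is $W$-invariant, forcing the (extension of) $p\psi$ to vanish on each hyperplane $H_{\alpha}$, after which divisibility by the pairwise distinct linear factors gives the result (the paper phrases the vanishing via a sign-change of $\langle\alpha,\lambda\rangle$ near a generic real point of $H_{\alpha}$, you phrase it via the identity theorem; these are the same mechanism). One small caveat: for a non-simple positive root $\alpha$ the reflection $s_{\alpha}$ need not permute $\sum_{0}^{+}\setminus\{\alpha\}$, but your parenthetical identity $p(\sigma\lambda)=\det(\sigma)\,p(\lambda)$ (valid here since $m_{2\alpha}=0$ makes the restricted root system reduced) still yields $p(s_{\alpha}\lambda)=-p(\lambda)$, so nothing is lost.
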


\begin{proof}
Since $p(\lambda)$ is a product of irreducibles it suffices to
show that $R(\lambda)=p(\lambda)\psi(\lambda)$  vanishes on
$\{\lambda\in \textbf{a}_{\mathbb{C}}^{*}: p(\lambda)=0\}$. This
will follow if we show that $R(\lambda)$ vanishes on $\{\lambda\in
\bf{a}^{*}: p(\lambda)=0\}.$  Fix $\alpha\in\sum_{0}^{+}$ and let
$0\neq\lambda_{0}\in \textbf{a}^{*}$ be such that
$\langle\alpha,\lambda_{0}\rangle=0$ and $\langle \beta ,
\lambda_0 \rangle \neq 0$ if $\beta \neq \alpha.$   It is easy to
see that, in a small enough neighborhood of $\lambda_{0},$
$\langle\alpha,\lambda\rangle$ takes both positive and negative
values while Sgn $(\langle\beta,\lambda\rangle)$ is constant
$\forall \beta\in\sum_{0}^{+},$ $\beta\neq\alpha$. Since
$\psi(\lambda)$ is Weyl group invariant this will force
$R(\lambda)=0$ if $\lambda=\lambda_{0}$. This proves that
$R(\lambda)$ is zero on (real) $(n-1)$  dimensional strata of the
set $\{ \lambda \in \textbf{a}^{*}:~ p(\lambda) = 0 \}.$ This
clearly implies that $R(\lambda) = 0 $ whenever $p(\lambda) = 0 .$
This finishes the proof.
\end{proof}

Our proof works well for many other cases as well. To explain this
first we reproduce the analysis of $c$-function from \cite{HP}.
The Plancherel density $|c(\lambda)|^{-2}$ is given by the product
formula
$$|c(\lambda)|^{-2}=c\prod_{\alpha\in\sum_{0}^{+}}|c_{\alpha}(\lambda)|^{-2}.$$
Recall that if both $\alpha$ and $2\alpha$ are roots, then
$m_{\alpha}$ is even and $m_{2\alpha}$ is odd. Consider the
following cases :
\begin{itemize}
\item[\textbf{(a)}] $m_{\alpha}$ even, $m_{2\alpha}=0$
\item[\textbf{(b)}] $m_{\alpha}$ odd, $m_{2\alpha}=0$
\item[\textbf{(c)}] $m_{\alpha}/2$ even, $m_{2\alpha}$ odd
\item[\textbf{(d)}] $m_{\alpha}/2$ odd, $m_{2\alpha}$ odd.
\end{itemize}
If
$\lambda_{\alpha}=\frac{\langle\lambda,\alpha\rangle}{\langle\alpha,\alpha\rangle}$,
with the convention that product over an empty set is $1$, the
explicit expression for $|c_{\alpha}(\lambda)|^{-2}$ is given by
(upto a constant)
$\lambda_{\alpha}p_{\alpha}(\lambda)q_{\alpha}(\lambda)$ where
$p_{\alpha}$ and $q_{\alpha}$ in the four cases listed above are
the following:
\begin{itemize}
\item[\textbf{(a)}] $p_{\alpha}(\lambda)=\Pi_{k=1}^{\frac{m_{\alpha}}{2}-1}\left[\lambda_{\alpha}^{2}+k^{2}\right]$,  \item[~]$q_{\alpha}(\lambda)=1.$
\item[\textbf{(b)}]$p_{\alpha}(\lambda)=\Pi_{k=0}^{\frac{m_{\alpha}-3}{2}}\left[\lambda_{\alpha}^{2}+(k+\frac{1}{2})^{2}\right]$,  \item[~]$q_{\alpha}(\lambda)=tanh\pi\lambda_{\alpha}.$
\item[\textbf{(c)}]$p_{\alpha}(\lambda)=\Pi_{k=0}^{\frac{m_{\alpha}}{4}-1}\left[(\frac{\lambda_{\alpha}}{2})^{2}+(k+\frac{1}{2})^{2}\right]\Pi_{k=0}^{\frac{m_{\alpha}}{4}+\frac{m_{\alpha}-1}{2}-1}\left[(\frac{\lambda_{\alpha}}{2})^{2}+(k+\frac{1}{2})^{2}\right]$,  \item[~]$q_{\alpha}(\lambda)=tanh\frac{\pi\lambda_{\alpha}}{2}.$
\item[\textbf{(d)}]$p_{\alpha}(\lambda)=\Pi_{k=0}^{\frac{m_{\alpha}-2}{4}}\left[(\frac{\lambda_{\alpha}}{2})^{2}+k^{2}\right]\Pi_{k=1}^{\frac{m_{\alpha}+2m_{2\alpha}}{4}-1}\left[(\frac{\lambda_{\alpha}}{2})^{2}+k^{2}\right]$,  \item[~]$q_{\alpha}(\lambda)=coth\frac{\pi\lambda_{\alpha}}{2}$.
\end{itemize}
The case \textbf{(a)} corresponds to the case dealt with in
Theorem 3.9. It is clear from the above expression that if
$m_{\alpha}$ is large enough $\forall\alpha\in\sum_{0}^{+}$ then
$$\lambda_{\alpha}p_{\alpha}(\lambda)q_{\alpha}(\lambda)\geq\lambda_{\alpha}^{2},\vspace*{0.1in}~~\forall\alpha\in\sum_{0}^{+}$$
and consequently we obtain (3.3). Hence the theorem holds for all
groups with this property. Simple Lie groups with this property
can be read off from the list in \cite{W}.


\begin{thebibliography}{99}

\bibitem{AN} Agranovsky, M. L., Narayanan, E. K. \textit{$L^{p}$-integrability, supports of Fourier transforms and uniqueness for convolution equations,} J. Fourier Anal. Appl, 10 (2004), no.3, 315-324.

\bibitem{BGG} Brandolini, L., Greenleaf, L., Travaglini, G. \textit{$L^{p}-L^{p^{'}}$ estimates for overdetermined Radon transforms,} Trans. Amer. Math. Soc., 359 (2007) no. 6, 2559--2575.

\bibitem{B} Bray, W. O., Generalized spectral projections on symmetric spaces of noncompact type: Paley-Wiener theorems.  J. Funct. Anal.  135  (1996),  no. 1, 206--232.

\bibitem{C} Chirka, E. M. \textit{Complex analytic sets,} Kluwer Acad. Publ., Vol. 46, 1989.

\bibitem{EHO} Eguchi, M, Hashizume, M, Okamoto, K. \textit{ The Paley-Wiener theorem for distributions on symmetric spaces},  Hiroshima Math. J.  3  (1973), 109--120.

\bibitem{GH} Griffiths, P., Harris, J. \textit{Principles of algebraic geometry,} Reprint of the 1978 original. Wiley Classics Library. John Wiley and Sons, Inc., New York, 1994.

\bibitem{He} Helgason, S. \textit{Groups and Geometric Analysis,} Academic Press, New York, 1984.

\bibitem{HP} Hilgert, J., Pasquale, A. Resonances and residue operators for symmetric spaces of rank one.  J. Math. Pures Appl. (9)  91  (2009),  no. 5, 495--507.

\bibitem{Ho} H\"{o}rmander, L. \textit{Lower bounds at infinity for solutions to partial differential equations with constant coefficients,} Israel J. Math., 16 (1973), 103--116.

\bibitem{L1} Littman, W. \textit{Decay at infinity of solutions to partial differential equations with constant coefficients,} Trans. Amer. Math. Soc., 123 (1966), 449--459.

\bibitem{L2} Littman, W. \textit{Decay at infinity of solutions to higher order partial differential equations: removal of the curvature assumption,} Israel J. Math., 8 (1970), 11--20.

\bibitem{PSSS} Mohanty, P.,  Ray, S. K., Sarkar, R. P., Sitaram, A. \textit{ The Helgason-Fourier transform for symmetric spaces. II},  J. Lie Theory  14  (2004),  no. 1, 227--242.

\bibitem{SS} Shahshahani, M.; Sitaram, A. \textit{The Pompeiu problem in exterior domains in symmetric spaces},  Integral geometry (Brunswick, Maine, 1984),  267--277, Contemp. Math., 63, Amer. Math. Soc., Providence, RI, 1987.

\bibitem{S} Sitaram, A. \textit{Fourier analysis and determining sets for Radon measures on $\mathbb{R}^{n}$} Illinois Journal of mathematics, 28 (1984) no. 2, 339--347.

\bibitem{T} Tr\`{e}ves, F. \textit{Differential polynomials and decay at infinity,} Bull. Amer. Math. Soc., 66 (1960), 184--186.

\bibitem{V} Volchkov, V. V. \textit{Integral geometry and convolution equations,} Kluwer Acad. Publ., Dordrecht-Boston-London, 2003.

\bibitem{W} Warner, G., \textit{Harmonic analysis on semi-simple Lie groups} I.  Springer-Verlag, New York-Heidelberg, 1972.

\end{thebibliography}
\end{document}